\DeclareFontFamily{U}{mathx}{\hyphenchar\font45}
\DeclareFontShape{U}{mathx}{m}{n}{
      <5> <6> <7> <8> <9> <10>
      <10.95> <12> <14.4> <17.28> <20.74> <24.88>
      mathx10
      }{}
\DeclareSymbolFont{mathx}{U}{mathx}{m}{n}
\DeclareMathAccent{\widecheck}{0}{mathx}{"71}
\DeclareMathAccent{\wideparen}{0}{mathx}{"75}
\newtheorem{main-theorem}{Theorem}
\newtheorem{proposition}{Proposition}[section]
\newtheorem{theorem}[proposition]{Theorem}
\newtheorem{corollary}[proposition]{Corollary}
\newtheorem{lemma}[proposition]{Lemma}
\theoremstyle{remark}
\newtheorem{remark}[proposition]{Remark}
\theoremstyle{definition}
\newtheorem{definition}[proposition]{Definition}
\newtheorem*{acknowledgements}{Acknowledgements}
\DeclareMathOperator{\supp}{supp}
\DeclareMathOperator{\graph}{graph}
\newcommand{\R}{\mathbb{R}}
\newcommand{\C}{\mathbb{C}}
\newcommand{\N}{\mathbb{N}}
\newcommand{\Sph}{\mathbb{S}}
\newcommand{\dd}{\mathrm{d}}
\newcommand{\tm}{\mathrm{t}}
\newcommand{\x}{\mathrm{x}}
\newcommand{\Id}{\mathrm{Id}}
\newcommand{\Orth}{\mathrm{O}}
\newcommand{\T}{\mathsf{T}}
\title[Inverse problems for data-driven prediction]{An inverse problem for data-driven prediction\\ in quantum mechanics}
\author{Pedro Caro}
\address{Pedro Caro\\
Ikerbasque \&
Basque Center for Applied Mathematics, Bilbao, Spain\\}
\email{pcaro@bcamath.org}
\author{Alberto Ruiz}
\address{Alberto Ruiz\\
Universidad Aut\'onoma de Madrid, 28049 Cantoblanco, Spain}
\email{alberto.ruiz@uam.es}
\begin{document}

\begin{abstract}
Data-driven prediction in quantum 
mechanics consists in providing an approximative description of
the motion of any particles at any 
given time, from data that have been previously collected for a certain number of 
particles under the influence of the same Hamiltonian. The difficulty of this 
problem comes from the ignorance of the exact Hamiltonian ruling the dynamic.
In order to address this problem, we formulate 
an inverse problem consisting in determining   
the Hamiltonian of a quantum system from the knowledge of the state at some 
fixed finite time for each initial state.
We focus on the simplest case where the 
Hamiltonian is given by $-\Delta + V$, where the potential
$V = V(\tm, \x)$ is non-compactly supported. Our main result is a uniqueness
theorem, which establishes that the Hamiltonian ruling the dynamic of all 
quantum particles is determined by the prescription of the initial and final
states of each particle. As a consequence, one expects to be able to know
the state of any particle at any given time, without 
an a priori knowledge of the
Hamiltonian just from the 
data consisting of the initial and final state of each
particle.
\end{abstract}

\date{\today}


\maketitle

\section{Introduction}
In quantum mechanics, the state of some moving particles
during an interval of time
$[0, T]$ is described by the family of wavefunctions 
$\{ u(t, \centerdot) : t \in [0, T] \} $, here
$u(t, \centerdot)$ denotes the function that takes the value $u(t, x) \in \C$ at
the point $x \in \R^n$. If the motion takes place 
under the influence of an electric potential $V = V(\tm, \x)$ and the initial 
state $u(0,  \centerdot)$ is prescribed by $f$, then the family of 
wavesfunctions satisfies
the initial-value problem for the Schr\"odinger equation
\begin{equation}
	\left\{
		\begin{aligned}
		& i\partial_\tm u = - \Delta u + V u & & \textnormal{in} \enspace (0,T) \times \R^n, \\
		& u(0, \centerdot) = f &  & \textnormal{in} \enspace \R^n.
		\end{aligned}
	\right.
	\label{pb:IVP}
\end{equation}
Thus, given an initial state $f$ we predict the motion of these 
quantum particles in the presence of $V$ by solving \eqref{pb:IVP}.
This classic approach
requires the exact knowledge of the Hamiltonian $-\Delta + V$ governing
the dynamic to predict the evolution of the system.
For this reason, it is usually referred to as 
\emph{phenonmenology-driven prediction}.

In the past two decades, we have witnessed how the use of data has become a 
game-changer for innovation and the development of new technology. Thus, it
is natural to investigate how data can be used to predict the evolution of
physical systems. For this reason, data-driven
approximation of solutions of partial differential equations is emerging as a 
powerful paradigm \cite{zbMATH07513856,zbMATH07516442}.

In this paper, we formulate a
data-driven prediction problem in quantum mechanics
and propose an inverse problem to provide a theoretical framework that explains
why one can solve the initial value problem \eqref{pb:IVP} 
ignoring the exact Hamiltonian if enough data, 
regarding initial and final states, is available.

In order to formulate the data-driven prediction problem in quantum mechanics,
we introduce the family of operators
$\{ \mathcal{U}_t : t \in [0, T] \}$ 
that, for every initial state $f$, is given by
\[ u (t, \centerdot) = \mathcal{U}_t f \]
for all $t \in [0, T]$, where $u$ solves \eqref{pb:IVP};
and for $N \in \N$ we define the 
\emph{initial-to-final-state set with $N$ elements}
\[ \mathcal{D} = \{ (f_1, \mathcal{U}_T f_1), \dots, (f_N, \mathcal{U}_T f_N) \}.\]
Then, the \emph{data-driven approximate prediction problem}
consists in analysing when it is possible
to compute, from the initial-to-final-state set $\mathcal{D}$,
a suitable approximation ---as $N$ increases--- of the wavefunctions 
$u(t, \centerdot) = \mathcal{U}_t f$, for all $t \in [0, T]$ and all initial
state $f$.
Note that this formulation assumes
that the Hamiltonian has a particular structure but ignores a full description of 
the electric potential $V$. The formulation of this problem is motivated by
applications, where data will be given by the set $\mathcal{D}$ with a very large
$N$. There are many non-trivial challenges around this question, 
however, in this article we focus on an exact version of this problem which will
establish a robust theoretical framework to address the approximate problem.

Recall that whenever $V \in L^1 ((0, T); L^\infty(\R^n))$ we have
that, for every $f \in L^2(\R^n)$, there exists a unique 
$u \in C([0,T]; L^2(\R^n))$ solving \eqref{pb:IVP} ---see for example 
\cite{zbMATH05013664,zbMATH02204588}.
Moreover, \textit{the evolution map}
\[ \mathcal{U} : f \in L^2(\R^n) \mapsto u \in C([0, T]; L^2(\R^n)) \]
is linear and bounded, and consequently we have that
\[ \mathcal{U}_t : f \in L^2(\R^n) \mapsto u(t, \centerdot) \in L^2(\R^n) \]
is also bounded for all $t \in [0,T]$.
The solutions of the form $u = \mathcal{U} f$ with $f \in L^2(\R^n)$ are called
\emph{physical}, while for the final time $T$ 
the operator $\mathcal{U}_T$ is referred to as 
the \textit{initial-to-final-state map}.
Note that the initial-to-final-state set $\mathcal{D}$ consists of $N$ points
in the graph of $\mathcal{U}_T$:
\[ \graph(\mathcal{U}_T) = \{ (f, \mathcal{U}_T f) : f \in L^2(\R^n) \}.\]
Then, the \textit{data-driven prediction problem} 
in quantum mechanics consists in computing the evolution map
$ \mathcal{U} $ from the initial-to-final-state map 
$\mathcal{U}_T$.

A fundamental step to solve the data-driven prediction problem is to
show that the initial-to-final-state map uniquely determines the evolution map.
The main result of this article is to show that this is the case whenever 
$n \geq 2$ and the electric potential 
satisfies an a priori assumptions about its 
decay at infinity.

\begin{definition} \label{def:exp_decay} Let $\Sigma \subset \R \times \R^n$ denote the open subset 
$(0, T) \times \R^n$. We say that 
$V \in L^1((0, T); L^\infty (\R^n))$ has \emph{super-exponential decay} if
$ e^{\rho |\x|} V \in L^2(\Sigma) \cap L^\infty(\Sigma) $ for all $\rho > 0$.
\end{definition}

The super-exponential decay is sufficient to ensure that the initial-to-final-state map uniquely determines the evolution map.

\begin{main-theorem}\label{th:uniqueness_data-driven} \sl
Consider $V_1$ and $V_2$ in $L^1((0, T); L^\infty (\R^n))$ with $n \geq 2$.
Let $\mathcal{U}_T^j$ and $\mathcal{U}^j$ with $j \in \{ 1, 2 \}$ denote 
the initial-to-final-state and evolution maps associated to the Hamiltonian
$-\Delta + V_j$. If 
$V_1$ and $V_2$ have super-exponential decay, then
\[ \mathcal{U}_T^1 = \mathcal{U}_T^2 \, \Rightarrow  \, \mathcal{U}^1 = \mathcal{U}^2. \] 

\end{main-theorem}

Our approach to solve the data-driven problem consists in analysing an
inverse problem whose goal is to determine uniquely 
the potential $V$ from its corresponding initial-to-final-state map 
$\mathcal{U}_T$, and reconstructing $V$ from $\mathcal{U}_T$ whenever 
this unique determination holds. It is straightforward to see that solving the
\textit{initial-to-final-state inverse problem} is sufficient to solve
the data-driven problem.

In this paper we prove a uniqueness theorem, 
for dimension $n \geq 2$, that consists in
determining the electric potential $V$ with super-exponential decay, from the 
knowledge of the final state for each initial 
state.

\begin{main-theorem}\label{th:uniqueness} \sl
Consider $V_1$ and $V_2$ in $L^1((0, T); L^\infty (\R^n))$ with $n \geq 2$.
Let $\mathcal{U}_T^j$ with $j \in \{ 1, 2 \}$ denote 
the initial-to-final-state map associated to the Hamiltonian
$-\Delta + V_j$. If 
$V_1$ and $V_2$ have super-exponential decay, then
\[ \mathcal{U}_T^1 = \mathcal{U}_T^2 \, \Rightarrow  \, V_1 = V_2. \] 
\end{main-theorem}

The \cref{th:uniqueness_data-driven} is a straight 
forward consequence of the \cref{th:uniqueness}.
Furthermore, note that a reconstruction algorithm based on the proof of our 
\cref{th:uniqueness} would yield a way of computing the evolution map 
$\mathcal{U}$ from the initial-to-final-state map $\mathcal{U}_T$, which would 
solve the data-driven prediction problem in its exact form.
Furthermore, such a reconstruction algorithm would
open the possibility to construct quantum Hamiltonians that would make many 
different initial states end at once in some specific final states
---whenever these initial and final states belong to $\graph(\mathcal{U}_T)$ 
for some $V$.

The scheme of the proof of the \cref{th:uniqueness} is as follows.
From the equality $\mathcal{U}_T^1 = \mathcal{U}_T^2$ 
we derive an orthogonality relation, that reads
\[ \int_\Sigma (V_1 - V_2)u_1 \overline{v_2} \, = 0 \]
for all physical solutions $u_1$ and $v_2$ in $C([0,T]; L^2(\R^n))$ 
of the equations
\[ (i\partial_\tm + \Delta - V_1) u_1 = 0  \enspace \textnormal{and} \enspace (i\partial_\tm + \Delta - \overline{V_2}) v_2 = 0 \enspace \textnormal{in} \enspace \Sigma. \]
With this identity at hand, and inspired by the method of complex geometrical 
optics (CGO for short) solutions for the Calderón problem, we aim to construct
solutions of the form
\[ u_1 = e^{\varphi_1} (u_1^\sharp + u_1^\flat), \qquad v_2 = e^{\varphi_2} (v_2^\sharp + v_2^\flat),\]
with $\varphi_1(t,x) + \overline{\varphi_2(t,x)} = 0$ 
for all $(t, x) \in \R \times \R^n$ for the equations 
\[ (i\partial_\tm + \Delta - V_1^{\rm ext}) u_1 = 0 \enspace \textnormal{and} \enspace  (i\partial_\tm + \Delta - \overline{V_2^{\rm ext}}) v_2 = 0 \enspace \textnormal{in} \enspace \R \times \R^n, \]
where
\begin{equation}
\label{def:extension}
 V_j^{\rm ext} (t, x) = \left\{ \begin{aligned}
& V_j (t, x) & & (t, x) \in \Sigma,\\
& 0 & & (t, x) \notin \Sigma.
\end{aligned} \right.
\end{equation}
The solutions $u_1$ and $v_2$ will be parametrized by 
$\nu \in \R^n$ with $|\nu| \geq \rho_0 $ so that
the real parts of $\varphi_1$ and $\varphi_2$ satisfy
$ \Re \varphi_1 = - \Re \varphi_2 = - \nu \cdot \x $,
and the inequalities
\[ \| u_1^\sharp \|_{L^2 (\Sigma^\nu_{< R})} + \| v_2^\sharp \|_{L^2 (\Sigma^\nu_{< R})} \lesssim (T R)^{1/2} \]
and
\begin{equation}
\label{in:remainder}
\| u_1^\flat \|_{L^2 (\Sigma^\nu_{< R})} + \| v_2^\flat \|_{L^2 (\Sigma^\nu_{< R})} \lesssim \frac{(T R)^{1/2 - \theta}}{|\nu|^{1/2}}
\end{equation}
hold for all $\nu \in \R^n$ with $|\nu| \geq \rho_0 > 0 $, where 
$\Sigma^\nu_{< R} = (0, T) \times \{ x \in \R^n : |x \cdot \nu| < |\nu| R \}$
and $\theta \in (0, 1/2)$. The functions $ u_1^\sharp $ and $ v_2^\sharp $ will
depend on $\hat{\nu} = \nu / |\nu| \in \Sph^{n - 1}$.
Thus, if plugging our CGO solutions 
into the orthogonality relation were allowed, we 
would conclude, by making $|\nu|$ go to infinity, that
\[\int_\Sigma (V_1 - V_2)u_1^\sharp \overline{v_2^\sharp} \, = 0 \qquad \forall \hat{\nu} \in \Sph^{n-1} .\]
Then, performing a density argument for all possible products 
$u_1^\sharp \overline{v_2^\sharp}$ we would prove that
$ V_1 (t, x) = V_2(t, x) $ for  almost every $(t, x) \in \Sigma$.

This approach presents two non-trivial challenges: first to construct the CGO 
solutions for the Schr\"odinger equation satisfying the above properties
---including that 
\[ \{ u_1^\sharp \overline{v_2^\sharp} : \hat{\nu} \in \Sph^{n-1} \} \]
is somehow  dense---, 
and then showing that the orthogonality relation is satisfied
for CGO solutions ---note that their restrictions to $\Sigma$ grow exponentially 
in certain regions of the space and, in consequence, they are not physical.

We will see in the \cref{sec:CGO_solutions} that in order to construct the CGO 
solutions we only need 
\begin{equation}
\label{cond:V_CGO}
\sum_{j \in \N_0} 2^{j(1/2 + \theta)} \| V_k \|_{L^\infty(\Sigma^\nu_j)} < \infty
\end{equation}
for all $\nu \in \R^n \setminus \{ 0 \}$ and $\theta \in (0, 1/2)$ as in the
inequality \eqref{in:remainder}.
Here and throughout the article $\N_0 = \{ 0 \} \cup \N$. Additionally, 
for $ \nu \in \R^n \setminus \{ 0 \}$, the sequence 
of strips $ \{ \Sigma^\nu_j : j \in \N_0 \} \subset \Sigma $ 
in the direction of $\nu$ are given by
\[ \Sigma^\nu_j = (0, T) \times \Omega^\nu_j\]
with
\[ \Omega^\nu_0 = \{ x \in \R^n : |x \cdot \nu| \leq |\nu|  \}, \quad
\Omega^\nu_j = \{ x \in \R^n : 2^{j - 1} |\nu| < |x \cdot \nu| \leq 2^j |\nu|  \} \enspace \forall j \in \N.\]

The condition \eqref{cond:V_CGO} 
is substantially weaker than having a super-exponential decay. The 
super-exponential decay is only used to extend the orthogonal 
identity from physical solutions to CGO solutions.
We have not analysed the necessity of this condition to prove the uniqueness
property of the initial-to-final-state inverse problem. However, we think that
it is interesting to study if the super-exponential decay could be 
replaced by a weaker exponential decay
(see \cite{zbMATH04103631,zbMATH00554321,zbMATH02208390}).

As we have mentioned earlier, in order to solve the data-prediction problem in
quantum mechanics seems crucial to derive a reconstruction algorithm to compute 
$V$ from its corresponding initial-to-final-state map $\mathcal{U}_T$.
This question does not seem trivial in the light of our analysis:
on the one hand, CGO solutions conform an essential tool, while on the other hand, 
they cannot be used as pieces of data because they are not physical.
Fortunately, part of our analysis to extend the orthogonality identity to CGO
solutions can be understood as a sort of density property, or approximation of
CGO by physical solutions.

It is known that solutions for the Schr\"odinger equation has
a unique continuation property whenever the initial and final states satisfy a 
condition known as the Hardy uncertainty principle (see 
\cite{zbMATH05127580,zbMATH05505931,zbMATH05365139,zbMATH05816157,zbMATH06022805}.)
We believe of capital importance to decide if the condition 
$\mathcal{U}_T^1 = \mathcal{U}_T^2$ can be replaced
by something on the spirit of the 
Hardy uncertainty principle for the difference of $\mathcal{U}_T^1 $ and 
$ \mathcal{U}_T^2$. Because of the property of weak unique continuation from the 
boundary, a weaker condition connected to the Hardy uncertainty principle
can be thought of as a counterpart of
the partial Dirichlet and Neumann data measured 
on the same open subset of the boundary for the Calderón problem 
(see, for example, 
the Calderón problem with local data in \cite{zbMATH06520428}).

\subsection{Contents}
The rest of the paper consists of other four sections. The 
\cref{sec:CGO_solutions} contains the construction of the CGO solutions.
One of the key points to construct these solutions is to prove the boundedness
of certain Fourier multiplier. This boundedness
is proved in the \cref{sec:boundedness_Fm}.
The \cref{sec:ItFmap} contains the proof of the orthogonality relation for 
physical solutions and also the extension to CGO solutions.
Finally, the \cref{sec:uniqueness} contains the proof of the \cref{th:uniqueness}.

\section{Complex geometrical optics solutions} \label{sec:CGO_solutions}
In order to motivate our choice of CGO solutions for the Schr\"odinger equation
with a Hamiltonian $-\Delta + V$, 
we start by discussing the set of exponential solutions solving the 
free Schr\"odinger equation
\begin{equation}
\label{eq:free-Schroedinger_exponential-solutions}
(i\partial_\tm + \Delta) e^\varphi = 0 \enspace \textnormal{in} \enspace \R \times \R^n.
\end{equation}
A simple computation shows that \eqref{eq:free-Schroedinger_exponential-solutions} holds if and only if
\begin{equation}
\label{eq:nonlinear_phase}
i\partial_\tm \varphi + \nabla \varphi \cdot \nabla \varphi + \Delta \varphi = 0 \enspace \textnormal{in} \enspace \R \times \R^n.
\end{equation}
To simplify the search of solutions of the previous equation, we restrict to 
linear phases:
\[\varphi(t,x) = \gamma\, t + \zeta \cdot x \qquad \forall (t, x) \in \R \times \R^n\]
with $\gamma \in \C$ and $\zeta \in \C^n$. The equation 
\eqref{eq:nonlinear_phase} is equivalent to
\[ i \gamma + \zeta \cdot \zeta = 0 \]
for $\varphi$ linear, or equivalently,
\[\Im \gamma = |\Re \zeta|^2 - |\Im \zeta|^2, \qquad \Re \gamma = - 2 \Re \zeta \cdot \Im \zeta. \]
Among the exponential solutions of the free Schr\"odinger equation with linear
phases, we are going to select two types of solutions: the first type is
\[ (t, x) \in \R \times \R^n \mapsto e^{-i|\kappa|^2 t + i \kappa \cdot x} \]
with $\kappa \in \R^n$, while the second type is
\[ (t, x) \in \R \times \R^n \mapsto e^{i|\nu|^2 t + \nu \cdot x} \]
with $\nu \in \R^n \setminus \{ 0 \}$. Note that the first type of solutions 
is unimodular and belongs to the
tempered distributions $\mathcal{S}^\prime(\R \times \R^n)$,
while the second type has certain exponential 
growth and only belongs to $\mathcal{D}^\prime(\R \times \R^n)$.
Let us explain the implications that
these different behaviours have for our purposes.

The product of solutions of the first type are \emph{dense}:
If $F \in L^1(\R \times \R^n)$ satisfies that
\begin{equation}
\label{id:orthogonality_1st}
\int_{\R \times \R^n} F (t, x) \, e^{-i|\kappa_1|^2 t + i \kappa_1 \cdot x} \, \overline{e^{-i|\kappa_2|^2 t + i \kappa_2 \cdot x}} \, \dd (t, x) = 0 \qquad \forall \kappa_1, \kappa_2 \in \R^n,
\end{equation}
then $F (t, x) = 0$ for almost every $(t, x) \in \R \times \R^n$.

Indeed, start by noticing that \eqref{id:orthogonality_1st} is equivalent to
state that the Fourier transform of $F$ satisfies
\begin{equation}
\label{id:Fourier_F}
\widehat{F} (|\kappa_1|^2 - |\kappa_2|^2, \kappa_2 - \kappa_1) = 0 \qquad \forall \kappa_1, \kappa_2 \in \R^n.
\end{equation}
Given $(\tau, \xi) \in \R \times \R^n$ so that $\xi \neq 0$, we choose
\begin{equation}
\label{def:eta1eta2}
\kappa_1 = -\frac{1}{2} \Big( 1 + \frac{\tau}{|\xi|^2} \Big) \xi, \qquad \kappa_2 = \frac{1}{2} \Big( 1 - \frac{\tau}{|\xi|^2} \Big) \xi,
\end{equation}
to obtain that $\widehat{F}(\tau, \xi) = 0$. Additionally,
if $(\tau, \xi) = (0, 0)$ we choose $\kappa_1 = \kappa_2 = 0$, and we obtain that
$\widehat{F}(0, 0) = 0$. Consequently, $\widehat{F}$ vanishes in 
$\{ (\tau, \xi) \in \R \times \R^n : \xi \neq 0 \} \cup \{ (0, 0) \}$.
Since the $\widehat{F}$ is 
continuous in $\R \times \R^n$ we can conclude that $\widehat{F}(\tau, \xi) = 0$ 
for all $(\tau, \xi) \in \R \times \R^n$. Since the Fourier transform is 
injective, we have that $F (t, x) = 0$ for a.e. $(t, x) \in \R \times \R^n$.

This density property also holds for the product of
physical solutions of the free Schr\"odinger equation
\[ \{ u \in C (\R ; L^2(\R^n)) : u = e^{i \tm \Delta} f, \enspace f \in L^2(\R^n) \} \]
since these are superpositions of solutions of the first type. 
More precisely, if $F \in L^1(\R \times \R^n) \cap L^1(\R ; L^\infty (\R^n))$
satisfies that
\begin{equation}
\label{id:orthogonality_2nd}
\int_{\R \times \R^n} F \, e^{i \tm \Delta} f_1 \, \overline{e^{i \tm \Delta} f_2} \, = 0 \qquad \forall f_1, f_2 \in L^2(\R^n),
\end{equation}
then $F (t, x) = 0$ for a.e. $(t, x) \in \R \times \R^n$.

Indeed, we can write
\begin{equation*}
\int_{\R \times \R^n} F \, e^{i \tm \Delta} f_1 \, \overline{e^{i \tm \Delta} f_2} = \frac{1}{(2\pi)^\frac{n-1}{2}} \int_{\R^n \times \R^n} 
\widehat{f_1}(\xi) \overline{\widehat{f_2}(\eta)} 
\widehat{F} (|\xi|^2 - |\eta|^2, \xi - \eta) \, \dd (\xi, \eta).
\end{equation*}
Then, by the fact that
\[\{ g_1 \otimes g_2 : g_j \in L^2(\R^n) \}\]
is dense in $L^1 (\R^n \times \R^n)$, 
the identity \eqref{id:orthogonality_2nd} is equivalent to
\[\widehat{F} (|\xi|^2 - |\eta|^2, \eta - \xi) = 0 \qquad \forall \xi, \eta \in \R^n.\]
This is the exact same condition as \eqref{id:Fourier_F}. Hence, $F (t, x) = 0$ 
for a.e. $(t, x) \in \R \times \R^n$. In the argument above we have used the 
notation $g_1 \otimes g_2 (x, y) = g_1 (x) g_2(y)$ for all 
$(x, y) \in \R^n \times \R^n$.

After these observations, it seems natural to consider the CGO solutions 
in the form
\[u = e^{i \tm \Delta} f + v, \]
where $v$ is a correction to make $u$ solution for the Schr\"odinger equation
with Hamiltonian $-\Delta + V$. However, in order to apply the density of 
product of physical solutions, we need to be able to ensure that the 
contribution of the correction term $v$ 
with respect to that of $e^{i \tm \Delta} f$ is negligible.
To do so, we introduce the exponential scale of the second type of solutions
since they will make the correction term to be negligible. The intuition behind
this claim is the following: Let us write
\begin{equation}
\label{def:complex_phase}
\varphi(t,x) = i |\nu|^2 t + \nu \cdot x \qquad \forall (t, x) \in \R \times \R^n
\end{equation}
with $\nu \in \R^n \setminus \{ 0 \}$ and note that
\[ e^{-\varphi} (i\partial_\tm + \Delta  ) [ e^\varphi w] = (i\partial_\tm + \Delta + 2 \nu \cdot \nabla) w \]
for every $w \in \mathcal{D}(\R \times \R^n)$. Furthermore, by Plancherel's 
theorem we have that
\[ \| e^{-\varphi} (i\partial_\tm + \Delta) [ e^\varphi w] \|_{L^2(\R \times \R^n)}^2 = \| (i\partial_\tm + \Delta) w \|_{L^2(\R \times \R^n)}^2 + 4 \| \nu \cdot \nabla w \|_{L^2(\R \times \R^n)}^2. \]
Recall that Poincar\'e's inequality tells that there exists an absolute constant
$C > 0$ so that
\[ |\nu| \| w \|_{L^2(\R \times \R^n)} \leq C R \| \nu \cdot \nabla w \|_{L^2(\R \times \R^n)} \]
whenever
\begin{equation}
\label{cond:support_w}
\supp w \subset \R \times \{ x \in \R^n : |\nu \cdot x| < |\nu| R \}.
\end{equation}
Thus, the exponential scale introduced by $e^\varphi$ ensures that
there exists an absolute constant
$C > 0$ so that
\[ |\nu| \| w \|_{L^2(\R \times \R^n)} \leq C R \| (i\partial_\tm + \Delta + 2 \nu \cdot \nabla) w \|_{L^2(\R \times \R^n)} \]
whenever the condition \eqref{cond:support_w} holds. This yields a gain of
$|\nu|$ for $ \| w \|_{L^2(\R \times \R^n)} $.
The reason behind this gain comes from the fact that the exponential conjugation
$ e^{-\varphi} (i\partial_\tm + \Delta ) \circ e^\varphi  = i\partial_\tm + \Delta + 2 \nu \cdot \nabla $
gives as a result a perturbation of $i\partial_\tm + \Delta $ whose symbol has an
imaginary part that grows as $|\nu|$.
As we will see later, a gain in the same spirit 
allows us to construct a correction term
that becomes negligible when $|\nu|$ is large.

Now that we know that the second type of exponential solutions will be also 
useful, we need to make them go along with the physical solutions: Notice that
\[ e^{-\varphi} (i\partial_\tm + \Delta  ) [ e^\varphi e^{i \tm \Delta} f] = 0  \Leftrightarrow \nu \cdot \nabla (e^{i \tm \Delta} f) = 0. \]
Thus, the solutions of the free Schr\"odinger equation that we are going to 
consider are $e^\varphi u^\sharp$ with $\varphi$ as in \eqref{def:complex_phase} 
and
\begin{equation}
\label{def:amplitud}
u^\sharp (t, x) = [e^{i t \Delta^\prime} \psi] (Qe_1 \cdot x, \dots, Qe_{n - 1} \cdot x) \qquad \forall (t, x) \in \R \times \R^n,
\end{equation}
where $-\Delta^\prime$ is the free Hamiltonian in
$\R^{n - 1}$,  $\psi \in L^2(\R^{n - 1})$ and $Q \in \Orth(n)$ ---the orthogonal
group--- such that $\nu = |\nu| Qe_n$.
It is convenient to state that $u^\sharp$ is not a physical solution:
Note that it is constant in the direction of $\nu$, it satisfies
\[(i\partial_\tm + \Delta + 2 \nu \cdot \nabla) u^\sharp = 0 \enspace \textnormal{in} \enspace \R \times \R^n \]
and there exists an absolute constant $C > 0$ so that
\begin{equation}
\label{cond:usharp}
\| u^\sharp \|_{L^2 (\Upsilon_j \times \Omega^\nu_k)} \leq C 2^{(j + k) /2} \| \psi \|_{L^2(\R^{n - 1})},
\end{equation}
for all $j, k \in \N_0$, where
\[ \Upsilon_0 = \{ t \in \R : |t| \leq 1  \}, \quad
\Upsilon_j = \{ t \in \R : 2^{j - 1} < |t| \leq 2^j \} \enspace \forall j \in \N \]
and, recall from the introduction that
\[ \Omega^\nu_0 = \{ x \in \R^n : |x \cdot Qe_n| \leq 1  \},\quad
\Omega^\nu_k = \{ x \in \R^n : 2^{k - 1} < |x \cdot Qe_n| \leq 2^k \}, \enspace \forall k \in \N. \]

Now that we have a good choice for the solutions of the free Schr\"odinger 
equation, we will construct the CGO solutions for the equation
\begin{equation}
i\partial_\tm u = - \Delta u + \tilde{V} u \enspace \textnormal{in} \enspace \R \times \R^n,
\label{eq:schroedinger}
\end{equation}
in the form
\begin{equation}
\label{id:CGO}
u = e^\varphi (u^\sharp + u^\flat).
\end{equation}
Here $\tilde V : \R \times \R^n \rightarrow \C$ is defined as
\begin{equation}
\label{id:Vtilde}
\tilde V (t, x) = \left\{ \begin{aligned}
& V (t, x) & & (t, x) \in \Sigma,\\
& 0 & & (t, x) \notin \Sigma.
\end{aligned} \right.
\end{equation}
Choosing $\varphi$ as in \eqref{def:complex_phase} and $u^\sharp$ as in 
\eqref{def:amplitud}, we have that
\[ e^{-\varphi} \big(i\partial_\tm + \Delta - \tilde{V}  \big) \big[ e^\varphi (u^\sharp + u^\flat) \big] = (i\partial_\tm + \Delta + 2 \nabla \varphi \cdot \nabla - \tilde{V}) u^\flat - \tilde{V} u^\sharp. \]
Then, we choose the remainder $u^\flat$ solving
\begin{equation}
(i\partial_\tm + \Delta + 2 \nabla \varphi \cdot \nabla - \tilde{V}) u^\flat = \tilde{V} u^\sharp \enspace \textnormal{in} \enspace \R \times \R^n.
\label{eq:remainder-equation}
\end{equation}

In order to find $u^\flat$ solving the equation \eqref{eq:remainder-equation},
we are going to introduce a Fourier multiplier $S_\nu$  
such that if $u = S_\nu f $ with
$f \in \mathcal{S}(\R \times \R^n)$ then
\begin{equation}
\label{eq:constant_coefficients}
(i\partial_\tm + \Delta + 2 \nu \cdot \nabla) u = f \enspace \textnormal{in} \enspace \R \times \R^n.
\end{equation}
The idea is to construct $u^\flat$ as a solution for the equation
\begin{equation}
\label{eq:Neumann}
(\Id - S_\nu \circ M_{\tilde{V}} ) u^\flat =  S_\nu (\tilde{V} u^\sharp),
\end{equation}
where $M_{\tilde{V}} $ denotes the operator $ u \mapsto \tilde{V} u$, since 
$u^\flat$ satisfies the identity \eqref{eq:remainder-equation} if $u^\flat$ 
solves the equation \eqref{eq:Neumann}.
A way to solve \eqref{eq:Neumann} consists in finding 
a Banach space where the operator
$S_\nu \circ M_{\tilde{V}}$ becomes a contraction. In fact, we will find a 
couple of family of spaces $X_\nu^\theta$ and $Y_\nu^\theta$
with $\theta \in (0, 1/2)$ so that, if
$V \in L^\infty( \Sigma ) $ such that
\[ \sum_{j \in \N_0} 2^{j/2} \| V \|_{L^\infty(\Sigma^\nu_j)} < \infty, \]
with $\Sigma^\nu_j = (0, T) \times \Omega^\nu_j$, then
\begin{equation}
\label{in:to_construct_remainder}
\| S_\nu \|_{\mathcal{L}(X_\nu^\theta; Y_\nu^{1/2 - \theta})} + |\nu|^{1/2} \| M_{\tilde{V}} \|_{\mathcal{L}(Y_\nu^{1/2 - \theta}; X_\nu^\theta)} \lesssim 1
\end{equation}
as $|\nu|$ goes to infinity.\footnote{If $a$ and $b$ are positive constants, we 
write $a \lesssim b$ or equivalently $b \gtrsim a$, when there exists $C > 0$ so 
that $a \leq C b$. We refer to $C$ as the implicit constant. Additionally, if 
$a \lesssim b$ and $b \lesssim a$, we write $a \eqsim b$.}
As we will see, this is enough to define an inverse of the operator
$(\Id - S_\nu \circ M_{\tilde{V}})$ at least when $|\nu|$ is 
sufficiently large. Finally, in order to solve \eqref{eq:Neumann} we have to 
ensure that $\tilde{V}u^\sharp \in X_\nu^\theta$,
for that, we will require that for some $\theta \in (0, 1/2)$
\begin{equation}
\label{cond:Vepsilon}
\sum_{j \in \N_0} 2^{j(1/2 + \theta)} \| V \|_{L^\infty(\Sigma^\nu_j)} < \infty.
\end{equation}

\subsection{The Banach spaces}
Here we introduce the family of spaces $X_\nu^\theta$ and $Y_\nu^\theta$ with 
$\theta \in (0, 1/2)$ used in \eqref{in:to_construct_remainder}.
\begin{definition} \label{def:Xnu}
For $ \nu \in \R^n \setminus \{ 0 \}$, introduce the sequence 
of strips $ \{ \Pi^\nu_\alpha : \alpha \in \N_0^2 \} \subset \R \times \R^n $ 
in the direction of $\nu$ given by
\[ \Pi^\nu_\alpha = \Upsilon_{\alpha_1} \times \Omega^\nu_{\alpha_2}\]
with $\alpha = (\alpha_1, \alpha_2)$,
\begin{align*}
& \Upsilon_0 = \{ t \in \R : |t| \leq 1  \}, & &
\Upsilon_j = \{ t \in \R : 2^{j - 1} < |t| \leq 2^j \} \enspace \forall j \in \N; \\
& \Omega^\nu_0 = \{ x \in \R^n : |x \cdot \nu| \leq |\nu|  \}, & &
\Omega^\nu_j = \{ x \in \R^n : 2^{j - 1} |\nu| < |x \cdot \nu| \leq 2^j |\nu|  \} \enspace \forall j \in \N.
\end{align*}
Define $X_\nu^\theta $ with $\theta \in (0, 1/2)$ 
as the set of measurable functions 
$f:\R \times \R^n \rightarrow \C $ so that
\[ \sum_{\alpha \in \N_0^2} 2^{|\alpha| \theta} \| f \|_{L^2 (\Pi^\nu_\alpha)} < \infty, \]
and the functional 
$ \| \centerdot \|_{X_\nu^\theta} : X_\nu^\theta \rightarrow [0, \infty) $ as
\[ \| f \|_{X_\nu^\theta} = |\nu|^{-1/4} \sum_{\alpha \in \N_0^2} 2^{|\alpha| \theta} \| f \|_{L^2 (\Pi^\nu_\alpha)}. \]
\end{definition}
Redefining the set $X_\nu^\theta$ by identifying functions which 
are equal almost everywhere, 
we have that the functional $\| \centerdot \|_{X_\nu^\theta}$ becomes a 
norm and $(X_\nu^\theta, \| \centerdot \|_{X_\nu^\theta})$ is a Banach space.

\begin{definition} We define $Y_\nu^\theta $ as the set of measurable functions 
$u:\R \times \R^n \rightarrow \C $ so that
\[ \sup_{\alpha \in \N_0^2} \big[ 2^{-|\alpha|\theta} \| u \|_{L^2 (\Pi^\nu_\alpha)} \big] < \infty, \]
and the functional 
$ \| \centerdot \|_{Y_\nu^\theta} : Y_\nu^\theta \rightarrow [0, \infty) $ as
\[ \| u \|_{Y_\nu^\theta} = |\nu|^{1/4} \sup_{\alpha \in \N_0^2} \big[ 2^{-|\alpha|\theta} \| u \|_{L^2 (\Pi^\nu_\alpha)} \big]. \]
\end{definition}
Redefining now the set $Y_\nu^\theta$ by identifying functions which 
are equal almost everywhere, 
we have that the functional $\| \centerdot \|_{Y_\nu^\theta}$ becomes a 
norm, and $(Y_\nu^\theta, \| \centerdot \|_{Y_\nu^\theta})$ is a Banach space.

\begin{remark}\label{rm:duality} The space $(Y_\nu^\theta, \| \centerdot \|_{Y_\nu^\theta})$
can be identified with the dual of 
$(X_\nu^\theta, \| \centerdot \|_{X_\nu^\theta})$.
\end{remark}

Recall here that, if $V$ satisifies \eqref{cond:Vepsilon} for some $\theta$, we can use the spaces
$X_\nu^\theta$ and $Y_\nu^{1/2 - \theta}$ to solve the equation 
\eqref{eq:remainder-equation}.

\subsection{The construction of the remainder}
Here we explain and, to some extent, prove \eqref{in:to_construct_remainder}. 
Then, we show how to solve \eqref{eq:Neumann} and \eqref{eq:remainder-equation}.
Finally, we state in a theorem the existence of the CGO solutions and their
properties.

\begin{lemma}\label{lem:bilinearV}\sl Consider $\nu \in \R^n \setminus \{0\}$ and $V \in L^\infty ( \R \times \R^n) $. Then,
\[ \int_{\R \times \R^n} | V u \overline{v} | \, \leq \frac{1}{|\nu|^{1/2}} \Big( \sum_{\alpha \in \N_0^2} 2^{|\alpha|/2} \| V \|_{L^\infty (\Pi^\nu_\alpha)} \Big) \| u \|_{Y_\nu^{1/2 -\theta}} \| v \|_{Y_\nu^\theta} \]
for all $u \in Y_\nu^{1/2 -\theta} $ and $v \in Y_\nu^\theta $.
\end{lemma}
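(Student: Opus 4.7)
The plan is a direct decomposition into the strips $\Pi^\nu_\alpha$ followed by Cauchy--Schwarz on each piece. Since the family $\{\Pi^\nu_\alpha : \alpha \in \N_0^2\}$ is a partition (up to measure zero) of $\R \times \R^n$, I would first write
\[ \int_{\R \times \R^n} |Vu\overline{v}| = \sum_{\alpha \in \N_0^2} \int_{\Pi^\nu_\alpha} |Vu\overline{v}| . \]
On each piece I bound $|V|$ pointwise by $\|V\|_{L^\infty(\Pi^\nu_\alpha)}$ and apply Cauchy--Schwarz to the product $u\overline{v}$, obtaining
\[ \int_{\Pi^\nu_\alpha} |Vu\overline{v}| \leq \|V\|_{L^\infty(\Pi^\nu_\alpha)} \, \|u\|_{L^2(\Pi^\nu_\alpha)} \, \|v\|_{L^2(\Pi^\nu_\alpha)} . \]

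The second step is to translate the local $L^2$ norms back into the $Y$-norms by the very definition of these spaces: for every $\alpha \in \N_0^2$ one has
\[ \|u\|_{L^2(\Pi^\nu_\alpha)} \leq |\nu|^{-1/4} \, 2^{|\alpha|(1/2-\theta)} \, \|u\|_{Y_\nu^{1/2-\theta}}, \qquad \|v\|_{L^2(\Pi^\nu_\alpha)} \leq |\nu|^{-1/4} \, 2^{|\alpha|\theta} \, \|v\|_{Y_\nu^\theta}. \]
Multiplying these bounds gives the factor $|\nu|^{-1/2} \, 2^{|\alpha|/2}$, which is precisely what appears in the claimed inequality. Summing over $\alpha$ then yields
\[ \int_{\R \times \R^n} |Vu\overline{v}| \leq \frac{1}{|\nu|^{1/2}} \Big( \sum_{\alpha \in \N_0^2} 2^{|\alpha|/2} \|V\|_{L^\infty(\Pi^\nu_\alpha)} \Big) \|u\|_{Y_\nu^{1/2-\theta}} \|v\|_{Y_\nu^\theta}, \]
which is the desired estimate.

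There is no real obstacle here: the lemma is essentially a bookkeeping statement that the weights $2^{|\alpha|\theta}$ and $2^{|\alpha|(1/2-\theta)}$ built into $Y_\nu^\theta$ and $Y_\nu^{1/2-\theta}$ are dual to one another, and that their product reproduces the weight $2^{|\alpha|/2}$ controlling $V$. The only mildly subtle point is that one must check that the $\Pi^\nu_\alpha$ do form a partition of $\R \times \R^n$ (up to a null set); this is immediate from the defining inequalities of $\Upsilon_j$ and $\Omega^\nu_j$ once one notes that the boundary strips $|t|=2^{j-1}$ and $|x\cdot\nu|=2^{j-1}|\nu|$ have measure zero. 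No approximation or density argument is needed, since both sides of the inequality are well defined as soon as $V\in L^\infty(\R\times\R^n)$ and $u,v$ lie in the respective $Y$-spaces.
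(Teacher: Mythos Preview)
Your proof is correct and follows essentially the same route as the paper: decompose into the strips $\Pi^\nu_\alpha$, apply Cauchy--Schwarz on each piece, and then pull out the suprema defining the $Y$-norms so that the weights $2^{|\alpha|(1/2-\theta)}$ and $2^{|\alpha|\theta}$ combine into $2^{|\alpha|/2}$. The paper's write-up is slightly terser (it passes directly to the suprema rather than first isolating the pointwise bounds on $\|u\|_{L^2(\Pi^\nu_\alpha)}$ and $\|v\|_{L^2(\Pi^\nu_\alpha)}$), but the argument is the same.
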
 

\begin{proof}
It is a simple consequence of the Cauchy--Schwarz inequality
\begin{align*}
& \int_{\R \times \R^n} | V u \overline{v} | \, \leq \sum_{\alpha \in \N_0^2} \| V \|_{L^\infty (\Pi^\nu_\alpha)} \| u \|_{L^2 (\Pi^\nu_\alpha)} \| v \|_{L^2 (\Pi^\nu_\alpha)} \\
& \leq  \Big( \sum_{\alpha \in \N_0^2} 2^{|\alpha|/2} \| V \|_{L^\infty (\Pi^\nu_\alpha)} \Big) \sup_{\alpha \in \N_0^2} \big[ 2^{-|\alpha|(1/2 - \theta)} \| v \|_{L^2 (\Pi^\nu_\alpha)} \big] \sup_{\alpha \in \N_0^2} \big[ 2^{-|\alpha|\theta} \| v \|_{L^2 (\Pi^\nu_\alpha)} \big].
\end{align*}
Writing now the norms with the corresponding powers of $|\nu|^{1/4}$, we obtain
the inequality stated.
\end{proof}

The \cref{lem:bilinearV} and the \cref{rm:duality} imply the
following corollary.

\begin{corollary}\label{cor:smallness} \sl Consider $\nu \in \R^n \setminus \{0\}$ and
$V \in L^\infty ( \R \times \R^n) $ such that
\[\sum_{\alpha \in \N_0^2} 2^{|\alpha|/2} \| V \|_{L^\infty (\Pi^\nu_\alpha)} < \infty \]
Then, the operator
\[M_V : u\in Y_\nu^{1/2 - \theta} \mapsto Vu \in X_\nu^\theta\]
has a norm bounded as follows
\[ \| M_V \|_{\mathcal{L} (Y_\nu^{1/2 - \theta}; X_\nu^\theta)} \leq |\nu|^{-1/2} \sum_{\alpha \in \N_0^2} 2^{|\alpha|/2} \| V \|_{L^\infty (\Pi^\nu_\alpha)}. \]
\end{corollary}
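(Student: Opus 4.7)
The plan is to derive this corollary as a direct consequence of the bilinear estimate in Lemma~\ref{lem:bilinearV} combined with the duality identification in Remark~\ref{rm:duality}. The multiplication operator $M_V$ sends $u \in Y_\nu^{1/2-\theta}$ to $Vu$, and I need to bound the $X_\nu^\theta$-norm of $Vu$ by the quantity appearing on the right-hand side, times $\|u\|_{Y_\nu^{1/2-\theta}}$.

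First I would fix $u \in Y_\nu^{1/2-\theta}$ and realise the $X_\nu^\theta$-norm of $Vu$ as a supremum over the dual. By Remark~\ref{rm:duality}, the natural pairing between $X_\nu^\theta$ and $Y_\nu^\theta$ is the integral pairing $(f,v) \mapsto \int_{\R \times \R^n} f\overline{v}$; this is the pairing that matches the $\ell^1$-type summation defining $\|\cdot\|_{X_\nu^\theta}$ with the $\ell^\infty$-type supremum defining $\|\cdot\|_{Y_\nu^\theta}$ over the tiles $\{\Pi^\nu_\alpha\}$. Consequently,
\[
\|Vu\|_{X_\nu^\theta} = \sup\Bigl\{ \Bigl|\int_{\R \times \R^n} Vu\overline{v}\Bigr| : v \in Y_\nu^\theta, \; \|v\|_{Y_\nu^\theta} \leq 1 \Bigr\}.
\]

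Next I would apply Lemma~\ref{lem:bilinearV} to bound the integrand inside the supremum by
\[
\frac{1}{|\nu|^{1/2}} \Bigl( \sum_{\alpha \in \N_0^2} 2^{|\alpha|/2} \|V\|_{L^\infty(\Pi^\nu_\alpha)} \Bigr) \|u\|_{Y_\nu^{1/2-\theta}} \|v\|_{Y_\nu^\theta}.
\]
Taking the supremum over $v$ with $\|v\|_{Y_\nu^\theta} \leq 1$ and then dividing by $\|u\|_{Y_\nu^{1/2-\theta}}$ yields the stated bound on $\|M_V\|_{\mathcal{L}(Y_\nu^{1/2-\theta};\,X_\nu^\theta)}$.

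There is no real obstacle: the summability hypothesis on $V$ guarantees that the quantity $\sum_\alpha 2^{|\alpha|/2}\|V\|_{L^\infty(\Pi^\nu_\alpha)}$ is finite, so the right-hand side is meaningful and, in particular, $Vu$ does belong to $X_\nu^\theta$. The only point that requires a moment of care is confirming that the duality statement in Remark~\ref{rm:duality} is the isometric identification under the integral pairing (as opposed to an isomorphism with a different constant); granted that, the proof is a one-line unwinding of the two preceding results. If one preferred to avoid duality altogether, the same estimate can be obtained directly by tile-wise application of Hölder's inequality: $\|Vu\|_{L^2(\Pi^\nu_\alpha)} \leq \|V\|_{L^\infty(\Pi^\nu_\alpha)} \cdot 2^{|\alpha|(1/2-\theta)} |\nu|^{-1/4} \|u\|_{Y_\nu^{1/2-\theta}}$, followed by summation weighted by $|\nu|^{-1/4} 2^{|\alpha|\theta}$.
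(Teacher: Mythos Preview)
Your proposal is correct and follows exactly the route the paper indicates: the paper states this corollary as an immediate consequence of Lemma~\ref{lem:bilinearV} together with the duality in Remark~\ref{rm:duality}, and your argument via the dual representation of $\|Vu\|_{X_\nu^\theta}$ and the bilinear bound is precisely that. Your tile-wise H\"older alternative is also valid and gives the same constant.
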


In the \cref{sec:boundedness_Fm} we will define a multiplier 
$S_{(\lambda, \zeta)}$, with $\lambda \in \R$ and $\zeta \in \C^n$ such that
$\Re \zeta \neq 0$, and we will prove its boundedness.
In the case $(\lambda, \zeta) = (|\nu|^2, \nu)$ with 
$\nu \in \R^n \setminus \{0  \}$, this multiplier satisfies 
\eqref{eq:constant_coefficients} and it is denoted by $S_\nu$ for simplicity.
Here we just state the boundedness for 
$S_\nu = S_{(|\nu|^2, \nu)}$.

\begin{proposition}\label{th:boundedness_multiplier} \sl The Fourier multiplier 
$S_\nu$ can be 
extended to a bounded operator, still denoted by $S_\nu$, 
from $X_\nu^\theta$ to $Y_\nu^{1/2 - \theta}$ for every $\theta \in (0, 1/2)$.
Moreover, there exists an absolute constant $C> 0$ so that 
\[\| S_\nu \|_{\mathcal{L} (X_\nu^\theta; Y_\nu^{1/2 - \theta})} \leq C\]
for all $\nu \in \R^n \setminus \{ 0 \}$.
\end{proposition}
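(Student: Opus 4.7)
My plan is to establish the claimed $|\nu|$-uniform bound by reducing it, via symmetry and duality, to a family of strip-to-strip $L^2$ estimates, which are then proved by a careful Fourier-analytic inspection of the multiplier; this second step is the content of \cref{sec:boundedness_Fm}.

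First, I would use the orthogonal invariance of the construction: choosing $Q\in\Orth(n)$ with $\nu = |\nu|Qe_n$, both $S_\nu$ and the strip decomposition $\{\Pi_\alpha^\nu\}$ are covariant under the change of variable $x\mapsto Q^\T x$, so I may assume $\nu = |\nu|e_n$ without loss of generality. In this normalization the Fourier symbol of $i\partial_\tm + \Delta + 2\nu\cdot\nabla$ reads $p(\tau,\xi) = -\tau - |\xi|^2 + 2i|\nu|\xi_n$, and its zero set is the $(n-1)$-dimensional submanifold $\{\xi_n = 0,\ \tau = -|\xi'|^2\}$. The imaginary part $2|\nu|\xi_n$ grows linearly with $|\nu|$ off this set, providing the elliptic regularization that is the geometric source of the desired $|\nu|^{-1/2}$ factor.

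Second, by the duality in \cref{rm:duality}, the proposition is equivalent to the bilinear inequality
\[ |\langle S_\nu f, g\rangle_{L^2(\R\times\R^n)}| \lesssim \|f\|_{X_\nu^\theta}\|g\|_{X_\nu^{1/2-\theta}} \qquad \forall f\in X_\nu^\theta,\ g\in X_\nu^{1/2-\theta}. \]
Decomposing $f=\sum_\alpha f\chi_{\Pi_\alpha^\nu}$ and $g=\sum_\beta g\chi_{\Pi_\beta^\nu}$ and reading off the weights from the two norms, the problem reduces to the uniform strip-to-strip estimate
\[ |\langle S_\nu(\chi_{\Pi_\alpha^\nu} f),\chi_{\Pi_\beta^\nu} g\rangle| \lesssim |\nu|^{-1/2}\,2^{|\alpha|\theta + |\beta|(1/2-\theta)}\,\|f\|_{L^2(\Pi_\alpha^\nu)}\|g\|_{L^2(\Pi_\beta^\nu)}, \]
with implicit constant independent of $\alpha,\beta$, and $\nu$. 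Summing this bound against the weights then closes the argument for every $\theta\in(0,1/2)$.

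The hard part, and the core of \cref{sec:boundedness_Fm}, is the strip-to-strip estimate. A natural route is to compute the kernel of $S_\nu$ by residues in $\tau$: the simple pole at $\tau = -|\xi|^2 + 2i|\nu|\xi_n$ gives, for $t>0$,
\[ K(t,x) = \frac{-i}{(2\pi)^n}\int_{\xi_n>0} e^{ix\cdot\xi - it|\xi|^2 - 2t|\nu|\xi_n}\,\dd\xi, \]
and a mirror expression for $t<0$. The damping $e^{-2|t||\nu||\xi_n|}$ is where the $|\nu|^{-1/2}$ gain originates, after $L^2$-integration in $\xi_n$, while the residual oscillation is handled by Plancherel in $\xi'$ combined with a Kenig--Ruiz--Sogge-type one-dimensional estimate in $(t,x_n)$. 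The principal obstacle I foresee is the bookkeeping required to match the resulting dependence on the widths of $\Pi_\alpha^\nu$ and $\Pi_\beta^\nu$ to the exponent $|\alpha|\theta + |\beta|(1/2-\theta)$ while keeping the estimate uniform in $\nu$.
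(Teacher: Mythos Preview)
Your outline is a plausible route and shares one key step with the paper---the Plancherel reduction in the transverse variables $\xi'$---but diverges in two places from what \cref{sec:boundedness_Fm} actually does. First, instead of keeping $|\nu|$ in the symbol and extracting the $|\nu|^{-1/2}$ gain from the exponential damping of the kernel, the paper performs a parabolic rescaling $(t,x)\mapsto(-4|\nu|^2 t,\,2|\nu|\,Q^\T x)$ that turns $S_{(0,\nu)}$ into a \emph{fixed} multiplier $S$ with symbol $\tau-|\xi|^2+i\xi_n$; the factor of $|\nu|^{-1/2}$ then appears automatically from the Jacobians when the strip norms are rewritten in the new variables (identities \eqref{id:relations}). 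This entirely bypasses the $|\nu|$-bookkeeping you flag as the principal obstacle. Second, for the remaining two-dimensional multiplier $T$ with symbol $q(\xi)=\xi_1-\xi_2^2+i\xi_2$ (\cref{lem:multiplierT}), the paper does not compute the kernel by residues; instead it splits $T=T_{\leq\delta}+T_{>\delta}$ according to whether $|q|\lesssim\delta$, bounds $T_{>\delta}$ trivially on $L^2$ by $\delta^{-1}$, bounds $T_{\leq\delta}$ via H\"older and Hausdorff--Young (the low-frequency piece of $1/q$ lies in $L^p$ for $p<2$), and optimizes in $\delta$. Your kernel/residue approach should also work, and the invocation of a Kenig--Ruiz--Sogge flavour is apt in spirit, but the paper's argument is shorter and avoids having to justify the contour shift for non-Schwartz data or to track the oscillatory factor $e^{-it|\xi|^2}$ through the strip estimates.
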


\begin{corollary}\label{cor:inverse} \sl 
Consider $V \in L^\infty ( \R \times \R^n) $ such that
\[\sum_{\alpha \in \N_0^2} 2^{|\alpha|/2} \| V \|_{L^\infty (\Pi^\nu_\alpha)} < \infty \]
for all $\nu \in \R^n \setminus \{ 0 \}$.
Then, there exists a constant $\rho_0> 0$ that only
depends on $V$ so that the operator
\[ (\Id - S_\nu \circ M_V) : Y_\nu^{1/2 - \theta} \rightarrow Y_\nu^{1/2 - \theta} \]
has a bounded inverse in $Y_\nu^{1/2 - \theta}$ for all 
$\nu \in \R^n \setminus \{ 0 \}$ such that
$|\nu| \geq \rho_0$ and all $\theta \in (0, 1/2)$. Here $\Id$ denotes the identity 
map.
\end{corollary}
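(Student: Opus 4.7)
The approach is a standard Neumann series argument, relying on the two estimates already established: the absolute bound $\|S_\nu\|_{\mathcal{L}(X_\nu^\theta;\, Y_\nu^{1/2-\theta})} \le C$ from \cref{th:boundedness_multiplier} (uniform in $\nu$) and the decaying bound $\|M_V\|_{\mathcal{L}(Y_\nu^{1/2-\theta};\, X_\nu^\theta)} \le |\nu|^{-1/2}\, s(\nu)$ from \cref{cor:smallness}, where
\[
s(\nu) := \sum_{\alpha\in\N_0^2} 2^{|\alpha|/2}\|V\|_{L^\infty(\Pi^\nu_\alpha)}.
\]
Observe that the strips $\Pi^\nu_\alpha$ depend only on the direction $\hat{\nu} = \nu/|\nu|$, not on $|\nu|$, as is clear from $\Omega^\nu_j = \{x : 2^{j-1} < |x\cdot\hat\nu| \le 2^j\}$ for $j \ge 1$. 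Hence $s(\nu) = s(\hat{\nu})$ is really a function of the direction alone.

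Composing the two bounds, the operator $S_\nu \circ M_V$ maps $Y_\nu^{1/2-\theta}$ into itself with
\[
\| S_\nu \circ M_V \|_{\mathcal{L}(Y_\nu^{1/2-\theta})} \leq \frac{C\, s(\hat\nu)}{|\nu|^{1/2}}.
\]
The first task would be to upgrade the pointwise finiteness of $s$ that is assumed in the hypothesis to a uniform bound $S := \sup_{\hat{\nu}\in\Sph^{n-1}} s(\hat\nu) < \infty$. This is the step that I expect to be the main obstacle, since the hypothesis as stated is only pointwise in $\nu$; I would extract uniformity by a compactness/monotone-convergence argument on $\Sph^{n-1}$, dominating each $\|V\|_{L^\infty(\Pi^\nu_\alpha)}$ by an envelope that is essentially direction-independent (which is exactly what the super-exponential decay of $V$ provides in the applications, via $\|V\|_{L^\infty(\Pi^\nu_\alpha)} \leq e^{-\rho\, 2^{\alpha_2-1}}\|e^{\rho|\mathrm{x}|}V\|_{L^\infty}$ for $\alpha_2\ge 1$).

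Once $S<\infty$ is secured, define $\rho_0 := (2CS)^2$, which depends only on $V$. Then for every $\nu$ with $|\nu| \ge \rho_0$ and every $\theta \in (0,1/2)$ one has $\| S_\nu \circ M_V \|_{\mathcal{L}(Y_\nu^{1/2-\theta})} \le 1/2$, so the Neumann series
\[
(\Id - S_\nu \circ M_V)^{-1} = \sum_{k=0}^{\infty} (S_\nu \circ M_V)^k
\]
converges absolutely in $\mathcal{L}(Y_\nu^{1/2-\theta})$ and produces the required bounded inverse, of norm at most $2$. Uniqueness of the inverse is automatic from the contraction property. The Neumann step itself is routine; the substance of the argument lies in packaging \cref{th:boundedness_multiplier} and \cref{cor:smallness} together and in the uniformity in the direction $\hat\nu$.
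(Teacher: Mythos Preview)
Your approach is exactly the paper's: combine \cref{th:boundedness_multiplier} and \cref{cor:smallness} and invoke the Neumann series; the paper's proof is a single sentence to that effect.

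Your additional discussion of uniformity in the direction $\hat\nu$ goes beyond what the paper writes. You are right that $s(\nu)=s(\hat\nu)$ depends only on the direction, and that to obtain a single $\rho_0$ depending only on $V$ one needs $\sup_{\hat\nu\in\Sph^{n-1}} s(\hat\nu)<\infty$, which does not follow from the pointwise hypothesis alone. The paper's one-line proof does not address this; it simply asserts the conclusion. In the paper's actual applications (\cref{th:CGO} and the uniqueness proof in \cref{sec:uniqueness}) the potentials satisfy the stronger super-exponential decay of \cref{def:exp_decay}, which does give a direction-independent envelope exactly as you indicate, so the issue is harmless there. Your caution is well placed, but for the purpose of matching the paper's proof you can simply record that the conclusion follows from the Neumann-series theorem once the composed norm is small.
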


\begin{proof} Using the \cref{th:boundedness_multiplier} and 
\cref{cor:smallness} the conclusion of this corollary follows directly from the
Neumann-series theorem.
\end{proof}

With these results at hand, we are in position to solve the equation \eqref{eq:Neumann} 
and consequently \eqref{eq:remainder-equation}.
\begin{proposition}\sl 
Consider $V \in L^\infty ( \R \times \R^n) $ such that
\[\sum_{\alpha \in \N_0^2} 2^{|\alpha|/2} \| V \|_{L^\infty (\Pi^\nu_\alpha)} < \infty \]
for all $\nu \in \R^n \setminus \{ 0 \}$.
There exists a constant $\rho_0> 0$ that only
depends on $V$ so that, for every $\nu \in \R^n$ 
such that $|\nu| \geq \rho_0$ and every $f \in X_\nu^\theta$ with 
$\theta \in (0, 1/2)$, the function
\[ u = (\Id - S_\nu \circ M_V)^{-1} (S_\nu  f) \in Y_\nu^{1/2 - \theta}, \]
solves the equation
\[(i\partial_\tm + \Delta + 2 \nu \cdot \nabla - V) u = f \enspace \textnormal{in} \, \R \times \R^n.\]
Moreover, there exists an absolute constant $C>0$ so that
\[ \| u \|_{Y_\nu^{1/2 - \theta}} \leq C \| f \|_{X_\nu^\theta}.\]
\end{proposition}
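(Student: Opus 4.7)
The plan is to assemble this proposition directly from the tools already developed. Since $f \in X_\nu^\theta$, \cref{th:boundedness_multiplier} gives $S_\nu f \in Y_\nu^{1/2 - \theta}$ with $\| S_\nu f \|_{Y_\nu^{1/2 - \theta}} \le C \| f \|_{X_\nu^\theta}$ for an absolute $C$. Combining \cref{th:boundedness_multiplier} with \cref{cor:smallness} yields
\[ \| S_\nu \circ M_V \|_{\mathcal{L}(Y_\nu^{1/2 - \theta})} \le \frac{C'}{|\nu|^{1/2}} \sum_{\alpha \in \N_0^2} 2^{|\alpha|/2} \| V \|_{L^\infty(\Pi^\nu_\alpha)}, \]
so if $\rho_0$ is chosen large enough that the right-hand side is at most $1/2$ for every $|\nu| \ge \rho_0$, the Neumann series underlying \cref{cor:inverse} delivers $(\Id - S_\nu \circ M_V)^{-1}$ with operator norm at most $2$ on $Y_\nu^{1/2 - \theta}$. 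Consequently $u = (\Id - S_\nu \circ M_V)^{-1}(S_\nu f)$ is well defined in $Y_\nu^{1/2 - \theta}$ and satisfies $\| u \|_{Y_\nu^{1/2 - \theta}} \le 2C \| f \|_{X_\nu^\theta}$, which is the quantitative estimate with an absolute constant.

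It remains to check that $u$ solves the PDE. The defining identity for $u$ rearranges as $u = S_\nu(Vu + f)$, and by \cref{cor:smallness} we have $Vu \in X_\nu^\theta$, so $Vu + f \in X_\nu^\theta$. The point is that \eqref{eq:constant_coefficients}, which by construction holds for $g \in \mathcal{S}(\R \times \R^n)$, extends to every $g \in X_\nu^\theta$ in the sense of distributions. Given this, I apply it to $g = Vu + f$ to obtain
\[ (i \partial_\tm + \Delta + 2 \nu \cdot \nabla) u = (i \partial_\tm + \Delta + 2 \nu \cdot \nabla) S_\nu(Vu + f) = Vu + f \quad \textnormal{in} \enspace \mathcal{D}^\prime(\R \times \R^n), \]
which, after moving $Vu$ to the left-hand side, is precisely the stated equation.

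The only delicate point is the distributional extension of \eqref{eq:constant_coefficients}. The operator $S_\nu$ is, by its construction in \cref{sec:boundedness_Fm}, the bounded extension to $X_\nu^\theta$ of a Fourier multiplier initially defined on Schwartz functions; both sides of \eqref{eq:constant_coefficients} depend continuously on $g$ when the left-hand side is read in $\mathcal{D}^\prime$. I would close the argument by approximating $g \in X_\nu^\theta$ by Schwartz functions $g_k$ (using that characteristic functions of the strips $\Pi^\nu_\alpha$ can be mollified, and that the sum defining $\| \cdot \|_{X_\nu^\theta}$ has only finitely many significant terms for smooth compactly supported $g$), passing to the limit on both sides of $(i \partial_\tm + \Delta + 2 \nu \cdot \nabla)(S_\nu g_k) = g_k$ distributionally — the left-hand side limit uses the $Y_\nu^{1/2 - \theta} \hookrightarrow \mathcal{D}^\prime$ convergence of $S_\nu g_k \to S_\nu g$ — and then specializing to $g = Vu + f$. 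This is the main obstacle; once it is cleared, the rest of the proposition is an immediate consequence of the two preceding corollaries.
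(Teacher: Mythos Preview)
Your proposal is correct and follows essentially the same route as the paper: use \cref{th:boundedness_multiplier} and \cref{cor:smallness} to bound $S_\nu \circ M_V$ as a contraction on $Y_\nu^{1/2-\theta}$ for $|\nu|\ge\rho_0$, obtain the norm estimate via \cref{cor:inverse}, rewrite the defining identity as $u = S_\nu(Vu+f)$, and apply $i\partial_\tm+\Delta+2\nu\cdot\nabla$ to recover the PDE. You are in fact more explicit than the paper about the one point that deserves care, namely extending \eqref{eq:constant_coefficients} from $\mathcal{S}(\R\times\R^n)$ to all of $X_\nu^\theta$ by density; the paper simply asserts that applying the differential operator yields the equation without spelling out this limiting argument.
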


\begin{proof}
For $\nu \in \R^n$ such that $|\nu| \geq \rho_0$
and $f \in X_\nu^\theta$ with $\theta \in (0, 1/2)$,
the \cref{cor:inverse} and \cref{th:boundedness_multiplier} implies that
\[ \| u \|_{Y_\nu^{1/2 - \theta}}  \lesssim \| f \|_{X_\nu^\theta}. \]

By definition, we have that
\[(\Id - S_\nu \circ M_V) u = S_\nu  f.\]
Thus, applying the differential operator 
$i\partial_\tm + \Delta + 2 \nu \cdot \nabla$ 
to this identity we have that
\[(i\partial_\tm + \Delta + 2 \nu \cdot \nabla - V) u = f \enspace \textnormal{in} \, \R \times \R^n.\]
This concludes the proof.
\end{proof}

Recall that in order to construct the remainder term of the CGO solution
$u = e^{\varphi} (u^\sharp + u^\flat)$, that is
$u^\flat$ solving \eqref{eq:Neumann}, we have to 
ensure that $\tilde V u^\sharp \in X_\nu^\theta$, which is satisfied by
\eqref{cond:usharp} and \eqref{cond:Vepsilon}:
\[ \sum_{\alpha \in \N_0^2} 2^{|\alpha| \theta} \| \tilde{V} u^\sharp \|_{L^2(\Pi^\nu_\alpha)} \lesssim T^{\theta + 1/2} \| \psi \|_{L^2(\R^{n - 1})} 
\sum_{j \in \N_0} 2^{j(\theta + 1/2)} \| V \|_{L^\infty(\Sigma^\nu_j)}. \]

We finish this section by stating the theorem which establishes the existence 
of the CGO solutions and their properties.

\begin{theorem}\label{th:CGO}\sl
Consider $V \in L^\infty (\Sigma)$ such that
for some $\theta \in (0, 1/2)$ we have
\[\sum_{j \in \N_0} 2^{j(1/2 + \theta)} \| V \|_{L^\infty(\Sigma^\nu_j)} < \infty \]
for all $\nu \in \R^n \setminus \{ 0 \}$.
Let $\tilde V$ denote the trivial extension given in \eqref{id:Vtilde}.
For $\nu \in \R^n \setminus \{0\}$, define
\[\varphi(t,x) = i |\nu|^2 t + \nu \cdot x \qquad \forall (t, x) \in \R \times \R^n.\]
Additionally, for $\psi \in L^2(\R^{n - 1})$, define
\[ u^\sharp (t, x) = [e^{i t \Delta^\prime} \psi] (Qe_1 \cdot x, \dots, Qe_{n - 1} \cdot x) \qquad \forall (t, x) \in \R \times \R^n,\]
where $\Delta^\prime$ is the free Hamiltonian in
$\R^{n - 1}$, and $Q \in \Orth(n)$ such that $\nu = |\nu| Qe_n$.
Then, there exist positive constants $\rho_0$ and $C$ that only
depend on $V$ and $T$ so that, for every $\nu \in \R^n \setminus \{ 0 \}$
such that $|\nu| \geq \rho_0$ and every $\psi \in L^2(\R^{n - 1})$
there is $u^\flat \in Y_\nu^{1/2 - \theta}$ so that
\[ u = e^\varphi (u^\sharp + u^\flat) \]
solves the equation
\begin{equation}
\label{eq:CGO}
i\partial_\tm u = - \Delta u + \tilde V u \enspace \textnormal{in} \enspace \tilde \R \times \R^n
\end{equation}
and
\[ \sup_{\alpha \in \N_0^2} \big[ 2^{-|\alpha|/2} \| u^\sharp \|_{L^2 (\Pi^\nu_\alpha)} \big] + |\nu|^{1/2} \sup_{\alpha \in \N_0^2} \big[ 2^{-|\alpha|(1/2 - \theta)} \| u^\flat \|_{L^2 (\Pi^\nu_\alpha)} \big] \leq C \| \psi \|_{L^2(\R^{n - 1})}. \]
\end{theorem}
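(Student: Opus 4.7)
The plan is to assemble the pieces already developed in this section. First, the bound on $u^\sharp$ is immediate from \eqref{cond:usharp}: one has $\|u^\sharp\|_{L^2(\Pi_\alpha^\nu)} \leq C\, 2^{|\alpha|/2} \|\psi\|_{L^2(\R^{n-1})}$ for every $\alpha \in \N_0^2$ with an absolute implicit constant, which is exactly the first summand in the stated inequality. (Strictly speaking the sets $\Omega^\nu_k$ in \eqref{cond:usharp} and in the \cref{def:Xnu} agree, since $|x\cdot Qe_n|\leq 1$ iff $|x\cdot\nu|\leq |\nu|$, so no rescaling is needed.)

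Next I would construct $u^\flat$ by feeding the previous proposition the right-hand side $f = \tilde V u^\sharp$. The display inequality just before the statement of \cref{th:CGO}, together with the hypothesis on $V$, gives
\[ \sum_{\alpha \in \N_0^2} 2^{|\alpha|\theta}\|\tilde V u^\sharp\|_{L^2(\Pi_\alpha^\nu)} \lesssim T^{\theta+1/2}\|\psi\|_{L^2(\R^{n-1})}\sum_{j \in \N_0} 2^{j(1/2+\theta)}\|V\|_{L^\infty(\Sigma_j^\nu)},\]
where the finiteness of the $\alpha_1$-sum uses that $\tilde V u^\sharp$ is time-supported in $(0,T)$, so only finitely many $\Upsilon_{\alpha_1}$ contribute. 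Hence $f \in X_\nu^\theta$ with $\|f\|_{X_\nu^\theta} \leq C(V,T)|\nu|^{-1/4}\|\psi\|_{L^2(\R^{n-1})}$. Provided $|\nu|\geq \rho_0$, the proposition produces $u^\flat \in Y_\nu^{1/2-\theta}$ solving $(i\partial_\tm + \Delta + 2\nu\cdot\nabla - \tilde V)u^\flat = \tilde V u^\sharp$ in $\R\times\R^n$, with $\|u^\flat\|_{Y_\nu^{1/2-\theta}} \lesssim \|f\|_{X_\nu^\theta}$. Unpacking the norm of $Y_\nu^{1/2-\theta}$ then gives
\[ |\nu|^{1/2}\sup_{\alpha\in\N_0^2}\bigl[2^{-|\alpha|(1/2-\theta)}\|u^\flat\|_{L^2(\Pi_\alpha^\nu)}\bigr] = |\nu|^{1/4}\|u^\flat\|_{Y_\nu^{1/2-\theta}} \leq C(V,T)\|\psi\|_{L^2(\R^{n-1})},\]
which is the second summand.

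The final step is to verify that $u = e^\varphi(u^\sharp + u^\flat)$ solves \eqref{eq:CGO}. Since $\varphi(t,x) = i|\nu|^2 t + \nu\cdot x$ is linear and satisfies $i\partial_\tm\varphi + |\nabla\varphi|^2 + \Delta\varphi = -|\nu|^2 + |\nu|^2 + 0 = 0$, a direct computation yields the conjugation identity
\[ e^{-\varphi}(i\partial_\tm + \Delta - \tilde V)[e^\varphi w] = (i\partial_\tm + \Delta + 2\nu\cdot\nabla - \tilde V)w\]
for any distribution $w$. Applying this with $w = u^\sharp + u^\flat$, and using that $u^\sharp$ is constant in the direction of $\nu = |\nu|Qe_n$ (so $\nu\cdot\nabla u^\sharp = 0$) while $e^{it\Delta'}\psi$ solves the free Schrödinger equation on $\R^{n-1}$ (so $(i\partial_\tm + \Delta)u^\sharp = 0$), the right-hand side collapses to $\tilde V u^\sharp - \tilde V u^\sharp = 0$, as desired.

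The substantive difficulties have already been resolved upstream: the uniform-in-$\nu$ boundedness of the multiplier (\cref{th:boundedness_multiplier}), the $|\nu|^{-1/2}$ smallness of the multiplication operator (\cref{cor:smallness}), and the Neumann inversion (\cref{cor:inverse}). What remains here is only the careful bookkeeping of the $|\nu|^{1/4}$ powers built into the norms on $X_\nu^\theta$ and $Y_\nu^\theta$, so that the final bound comes out independent of $|\nu|$; this is the only place requiring attention, but no genuine obstacle arises.
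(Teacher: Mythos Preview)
Your proposal is correct and matches the paper's approach exactly: the paper does not give a separate proof of this theorem but assembles it implicitly from \eqref{cond:usharp}, the display bound on $\tilde V u^\sharp$ preceding the statement, and the proposition solving \eqref{eq:remainder-equation}, which is precisely what you do. Your tracking of the $|\nu|^{\pm 1/4}$ factors in the $X_\nu^\theta$ and $Y_\nu^{1/2-\theta}$ norms is accurate, and the conjugation computation verifying \eqref{eq:CGO} is the one already recorded in the paper around \eqref{eq:remainder-equation}.
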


\section{The boundedness of $S_{(\lambda, \zeta)}$}\label{sec:boundedness_Fm}

In this section we prove a generalization of the 
\cref{th:boundedness_multiplier}. To do so,
let us focus on inverting the constant-coefficient differential operator 
\begin{equation}
\label{term:constant-coefficient_operator}
i\partial_\tm + \Delta + 2 \zeta \cdot \nabla + \zeta \cdot \zeta - \lambda
\end{equation}
and finding bounds for its inverse on the spaces 
$X_{\Re \zeta}^\theta$ and $Y_{\Re \zeta}^{1/2 - \theta}$ with 
$\theta \in (0, 1/2)$.
Here $\lambda \in \R$ and $\zeta \in \C^n$ do not necessarily satisfy 
the condition $\lambda = \zeta \cdot \zeta$. The symbol of 
\eqref{term:constant-coefficient_operator} is the polynomial function
\begin{equation}
\label{id:symbol}
p_{(\lambda, \zeta)} (\tau, \xi) = - \lambda + \zeta \cdot \zeta - \tau - |\xi|^2 + i2 \zeta \cdot \xi \qquad \forall (\tau, \xi) \in \R \times \R^n.
\end{equation}
For $\lambda \in \R$ and $\zeta \in \C^n$ such that $\Re \zeta \neq 0$, set 
\[\Gamma_{(\lambda, \zeta)} = \{ (\tau, \xi) \in \R \times \R^n : p_{(\lambda, \zeta)} (\tau, \xi) = 0 \}.\]
Then, the function 
\[(\tau, \xi) \in (\R \times \R^n) \setminus \Gamma_{(\lambda, \zeta)} \longmapsto \frac{1}{p_{(\lambda, \zeta)} (\tau, \xi)} \in \C\]
can be extended to $\R \times \R^n$ as a locally integrable function. Thus, for 
$f \in \mathcal{S}(\R \times \R^n)$, we define
\begin{equation}
\label{id:multiplier_definition}
S_{(\lambda, \zeta)} f (t, x) = \frac{1}{(2\pi)^\frac{n+1}{2}} \int_{\R \times \R^n} e^{i (t \tau + x \cdot \xi)} \frac{1}{p_{(\lambda, \zeta)} (\tau, \xi)} \widehat{f}(\tau, \xi) \, \dd (\tau, \xi)
\end{equation}
for all $(t, x) \in \R \times \R^n$.

The goal of this section is to show that 
there exists an absolute constant $C > 0$ such that
\begin{equation}
\label{in:lambda-zeta}
\| S_{(\lambda, \zeta)} f \|_{L^2 (\Pi^{\Re \zeta}_\alpha)} \leq \frac{C}{|\Re \zeta|^{1/2}} 2^{|\alpha| \theta} \sum_{\beta \in \N_0^2} 2^{|\beta| (1/2 - \theta)} \| f \|_{L^2 (\Pi^{\Re \zeta}_\beta)} 
\end{equation}
for all $f \in \mathcal{S} (\R \times \R^n)$, $\alpha \in \N_0^2 $,
$\lambda \in \R$, $\zeta \in \C^n$ with $\Re \zeta \neq 0$, and 
$\theta \in (0, 1/2)$.

The \cref{th:boundedness_multiplier} follows from the inequality 
\eqref{in:lambda-zeta} setting 
$(\lambda, \zeta) = (|\nu|^2, \nu) $ for $\nu \in \R^n \setminus \{ 0 \}$, 
and taking supremum in $\alpha \in \N_0^2$.

The fact that
\[ p_{(\lambda, \zeta)} (\tau, \xi) = p_{(0, \Re\zeta)} (\tau + \lambda, \xi + \Im \zeta) \qquad \forall (\tau, \xi) \in \R \times \R^n,\]
implies that \eqref{in:lambda-zeta} is equivalent to prove that
there exits an absolute constant $C > 0$ such that
\begin{equation}
\label{in:nu}
\| S_{(0, \nu)} f \|_{L^2 (\Pi^\nu_\alpha)} \leq \frac{C}{|\nu|^{1/2}} 2^{|\alpha| \theta} \sum_{\beta \in \N_0^2} 2^{|\beta| (1/2 - \theta)} \| f \|_{L^2 (\Pi^\nu_\beta)} 
\end{equation}
for all $f \in \mathcal{S} (\R \times \R^n)$, $\alpha \in \N_0^2 $,
$\nu \in \R^n \setminus \{ 0 \}$, and $\theta \in (0, 1/2)$.
 
Since the operator $S_{(0, \nu)}$ has certain \emph{homogeneity}, instead of
studying the viability of \eqref{in:nu}, we will analyse a \emph{normalized} 
version of it. Consider the polynomial function
\[ p(\tau, \xi) = \tau - |\xi|^2 + i\xi_n \qquad \forall\, (\tau, \xi) \in \R \times \R^n, \]
where $\xi_n = \xi \cdot e_n$ and $\{e_1,\ldots,e_n\}$ is
the standard basis of $\R^n$. Setting
\[\Gamma = \{ (\tau, \xi) \in \R \times \R^n : p (\tau, \xi) = 0 \}.\]
Then, the function 
\[(\tau, \xi) \in (\R \times \R^n) \setminus \Gamma \longmapsto \frac{1}{p (\tau, \xi)} \in \C\]
can be extended to $\R \times \R^n$ as a locally integrable function. Thus, for 
$f \in \mathcal{S}(\R \times \R^n)$, we define
\begin{equation}
\label{def:S}
S f (t, x) = \frac{1}{(2\pi)^\frac{n+1}{2}} \int_{\R \times \R^n} e^{i (t \tau + x \cdot \xi)} \frac{1}{p (\tau, \xi)} \widehat{f}(\tau, \xi) \, \dd (\tau, \xi) \quad \forall (t, x) \in \R \times \R^n.
\end{equation}
In order to relate the operators $S$ and $S_{(0, \nu)}$, we choose 
$Q \in \Orth(n)$ so that $\nu = |\nu| Q e_n$ and make the change
$ \sigma = |\nu|^2 (1 - 4 \tau)$ and $ \eta = 2 |\nu| Q \xi$ that implies
\[ p_{(0, \nu)} (\sigma, \eta) = 4 |\nu|^2 p(\tau, \xi). \]
One can check that if $f, g \in \mathcal{S}(\R \times \R^n)$ satisfy
\[ \widehat{g}(\tau, \xi) = (2 |\nu|)^{n+2} \widehat{f} ( |\nu|^2 - 4 |\nu|^2 \tau, 2 |\nu| Q \xi) \qquad \forall (\tau, \xi) \in \R \times \R^n, \]
then
\begin{align*}
& f (s, y) = e^{i|\nu|^2 s} g (-4 |\nu|^2 s, 2 |\nu| Q^\T y),\\ 
& S_{(0, \nu)} f (s, y) = \frac{e^{i|\nu|^2 s}}{4|\nu|^2} S g (-4 |\nu|^2 s, 2 |\nu| Q^\T y).
\end{align*}
After these relations it is a simple task to check that
\begin{equation}
\label{id:relations}
\begin{aligned}
& \| f \|_{L^2(\Pi^\nu_\alpha)} = (2 |\nu|)^{-(n + 2)/2} \| g \|_{L^2(I_{\alpha_1} \times \R^{n - 1} \times J_{\alpha_2})},\\ 
& \| S_{(0, \nu)} f \|_{L^2(\Pi^\nu_\alpha)} = \frac{(2 |\nu|)^{-(n + 2)/2}}{4|\nu|^2} \| S g \|_{L^2(I_{\alpha_1} \times \R^{n - 1} \times J_{\alpha_2})}
\end{aligned}
\end{equation}
where $I_0 = [-4 |\nu|^2, 4 |\nu|^2] $, $J_0 = [-2 |\nu|, 2 |\nu|]$, and
$ I_k = [-2^{k +2} |\nu|^2, - 2^{k + 1} |\nu|^2 ) \cup (2^{k+1} |\nu|^2,  2^{k +2} |\nu|^2] $, 
$ J_k = [-2^{k +1} |\nu|, - 2^k |\nu| ) \cup (2^k |\nu|,  2^{k +1} |\nu|] $
for $k \in \N$.

Before stating the inequality for the Fourier multiplier $S$ that implies 
\eqref{in:nu}, we need to agree a definition. 

\begin{definition}
We say that a measurable subset $E$ of $\R^m$ with $m \in \N$ is regular if
\[0 < |\mathring E| = |\overline{E}| < \infty.\] 
Here $\mathring E $ and $\overline{E}$ denote the interior and closure of $E$, while 
$|F|$ denotes the measure of a measurable set $F$.
\end{definition}

\begin{lemma}\label{lem:boundedness_S} \sl
There exists an absolute constant $C > 0$
such that, for every $\theta \in (0, 1/2)$,
every measurable subsets $I$ and $J$ of $\R$ and every partition
$\{ I_N : N \in \N_0 \}$ and $\{ J_N : N \in \N_0 \}$ of $\R$ 
compounded by regular measurable sets,
we have that  
\[ \| S f \|_{L^2(I \times \R^{n - 1} \times J)} \leq C |I|^\theta |J|^\theta \sum_{\alpha \in \N_0^2} |I_{\alpha_1}|^{1/2 - \theta} |J_{\alpha_2}|^{1/2 - \theta} \| f \|_{L^2(I_{\alpha_1} \times \R^{n - 1} \times J_{\alpha_2})} \]
for $f \in \mathcal{S}(\R \times \R^n)$ and $\alpha = (\alpha_1, \alpha_2)$.
\end{lemma}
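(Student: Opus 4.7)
The plan is a two-stage reduction followed by a global $L^p\to L^q$ bound on a two-dimensional auxiliary Fourier multiplier. First, I would apply the partial Fourier transform in $x'=(x_1,\dots,x_{n-1})$. For each $\xi'$, the symbol of $S$ fibered over $\xi'$ is $1/(\tau-\xi_n^2-|\xi'|^2+i\xi_n)$, the translate in $\tau$ by $|\xi'|^2$ of $1/(\tau-\xi_n^2+i\xi_n)$; this translation is implemented by modulation by $e^{i|\xi'|^2 t}$, an isometry of $L^2_t$. Hence each fiber is unitarily equivalent to the single, $\xi'$-independent 2D Fourier multiplier $T$ on $L^2_{t,x_n}$ with symbol $1/(\tau-\xi_n^2+i\xi_n)$, and by Plancherel in $x'$ together with Minkowski's inequality (to move the $L^2_{\xi'}$-norm inside the sum on the right-hand side) the claim reduces to the analogous 2D estimate for $T$.

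Splitting $g=\sum_\alpha g\,\chi_{I_{\alpha_1}\times J_{\alpha_2}}$ and applying the triangle inequality further reduces the 2D estimate to the uniform bound
\[
\|Tg\|_{L^2(I\times J)}\leq C\,|I|^\theta|J|^\theta|I'|^{1/2-\theta}|J'|^{1/2-\theta}\|g\|_{L^2(I'\times J')}
\]
for $g$ supported on an arbitrary rectangle $I'\times J'\subset\R^2$. Choose $q=1/(1/2-\theta)\in(2,\infty)$ and $p=1/(1-\theta)\in(1,2)$, so that $1/p-1/q=1/2$, $1/2-1/q=\theta$, and $1/p-1/2=1/2-\theta$. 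Two applications of H\"older's inequality,
\[
\|Tg\|_{L^2(I\times J)}\leq |I|^\theta|J|^\theta\|Tg\|_{L^q(\R^2)}\quad\text{and}\quad \|g\|_{L^p(\R^2)}\leq|I'|^{1/2-\theta}|J'|^{1/2-\theta}\|g\|_{L^2(I'\times J')},
\]
then reduce the problem to proving the global estimate $\|Tg\|_{L^q(\R^2)}\leq C\|g\|_{L^p(\R^2)}$.

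To prove this global estimate, I would compute $\mathcal{F}_x K$, the partial Fourier transform in $x$ of the convolution kernel $K$ of $T$. Contour integration in $\tau$ around the pole at $\tau=\xi^2-i\xi$, which lies in the lower (resp.\ upper) half plane according as $\xi>0$ (resp.\ $\xi<0$), yields $\mathcal{F}_x K(t,\xi)=c\,e^{t\xi+it\xi^2}\chi_{\{t\xi<0\}}$; Plancherel in $x$ then gives the exact identity $\|K(t,\cdot)\|_{L^2_x}=c/\sqrt{|t|}$. Young's inequality in $x$ (with $r=2$ in $1+1/q=1/r+1/p$) applied to $K(t-s,\cdot)\ast_x g(s,\cdot)$ produces
\[
\|K(t-s,\cdot)\ast_x g(s,\cdot)\|_{L^q_x}\leq C|t-s|^{-1/2}\|g(s,\cdot)\|_{L^p_x};
\]
integrating in $s$ by Minkowski's inequality and applying the one-dimensional Hardy--Littlewood--Sobolev inequality for the fractional integral $I^{1/2}\colon L^p_t\to L^q_t$ ($1<p<2$) yields $\|Tg\|_{L^q(\R^2)}\leq C\|g\|_{L^p(\R^2)}$. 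The main obstacle is identifying the right kernel quantity to estimate: pointwise bounds on $K$ near the origin grow like $|t|^{-1/2}$, which is stronger than a two-dimensional Riesz kernel and too crude to yield the Hardy--Littlewood--Sobolev scaling; the exact $|t|^{-1/2}$ decay of $\|K(t,\cdot)\|_{L^2_x}$ is what matches that scaling.
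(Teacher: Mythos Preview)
Your reduction matches the paper's exactly: both take the partial Fourier transform in $x'$ to reduce to the two-dimensional multiplier $T$ with symbol $1/(\xi_1-\xi_2^2+i\xi_2)$ (the paper's Lemma~\ref{lem:multiplierT}), and both decompose $f$ into the partition boxes and apply the triangle inequality to reduce to a single-box estimate. The only difference is in how the two-dimensional bound
\[
\|Tg\|_{L^2(E)}\leq C\,|E|^\theta|F|^{1/2-\theta}\|g\|_{L^2(F)}
\]
is proved. The paper splits $T=T_{\leq\delta}+T_{>\delta}$ according to $|q(\xi)|\lesssim\delta$, controls the near-singularity piece by H\"older and Hausdorff--Young and the far piece by the trivial $L^2$ multiplier bound with $\|m_\delta\|_\infty\leq\delta^{-1}$, and then optimizes $\delta$. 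You instead reduce via two H\"older inequalities to a global $L^p\to L^q$ bound ($1/p-1/q=1/2$) and prove that bound from the exact identity $\|K(t,\cdot)\|_{L^2_x}=c|t|^{-1/2}$ together with Young in $x$ and one-dimensional Hardy--Littlewood--Sobolev in $t$. Your kernel computation is correct (the pole of $\tau\mapsto 1/(\tau-\xi^2+i\xi)$ sits at $\tau=\xi^2-i\xi$, and the residue calculus gives precisely $\mathcal F_x K(t,\xi)=c\,e^{t\xi+it\xi^2}\chi_{\{t\xi<0\}}$). Your route is arguably more direct and explains the $|t|^{-1/2}$ dispersive scale transparently; the paper's frequency-cutoff argument is more robust in that it would work for any symbol whose zero set has codimension two, without needing an explicit kernel. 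Both arguments produce a constant that degenerates as $\theta\to 0^+$ or $\theta\to 1/2^-$ (your HLS constant, the paper's $\int|\eta|^{-4\theta}d\eta$), so neither quite delivers an ``absolute'' constant uniformly over $\theta\in(0,1/2)$; this is harmless for the applications, where $\theta$ is fixed.
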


Before proving the \cref{lem:boundedness_S}, we will see how this
implies the inequality \eqref{in:nu}. For $I_{\alpha_1}$ and $J_{\alpha_2}$ with 
$\alpha \in \N_0^2$ as in the identities in \eqref{id:relations} we have
\[\| S g \|_{L^2(I_{\alpha_1} \times \R^{n - 1} \times J_{\alpha_2})} \lesssim  |\nu|^{1+1/2} 2^{|\alpha| \theta} \sum_{\beta \in \N_0^2} 2^{|\beta|(1/2 - \theta)} \| g \|_{L^2(I_{\beta_1} \times \R^{n - 1} \times J_{\beta_2})}.  \]
Then, the identities in \eqref{id:relations} yield
the inequality \eqref{in:nu}.

\begin{proof}[Proof of the \cref{lem:boundedness_S}]
Since the partitions $\{ I_N : N \in \N_0 \}$ and $\{ J_N : N \in \N_0 \}$
are compound by regular measurable 
sets, for every $f \in \mathcal{S}(\R \times \R^n)$ and every $\varepsilon > 0$, there exists a sequence $ \{ f_N : N \in \N \} \subset \mathcal{S}(\R \times \R^n) $ 
such that
\[\supp f_N \subset \bigcup_{\alpha \in \N_0^2} (\mathring{I_{\alpha_1}} \times \R^{n - 1} \times \mathring{J_{\alpha_2}}) \qquad \forall N \in \N, \]
and an $N_0 \in \N$
\[\sum_{\alpha \in \N_0^2} |I_{\alpha_1}|^{1/2 - \theta} |J_{\alpha_2}|^{1/2 - \theta} \| f - f_N \|_{L^2(I_{\alpha_1} \times \R^{n - 1} \times J_{\alpha_2})} < \varepsilon \qquad \forall N \geq N_0.\]
By density, we can assume that $f \in \mathcal{S}(\R \times \R^n)$ and
\begin{equation}
\label{cond:support}
\supp f \subset \bigcup_{\alpha \in \N_0^2} (\mathring{I_{\alpha_1}} \times \R^{n - 1} \times \mathring{J_{\alpha_2}}). 
\end{equation}
Write $f = \sum_{\alpha \in \N_0^2} f_\alpha $ with $f_\alpha \in \mathcal{S}(\R^n)$ denoting the function
\[ f_\alpha (t, x^\prime, x_n) = \mathbf{1}_{I_{\alpha_1}}(t) \mathbf{1}_{J_{\alpha_2}} (x_n) f (t, x^\prime, x_n) \qquad \forall (t, x^\prime, x_n) \in \R \times \R^{n - 1} \times \R, \]
where $\mathbf{1}_{I_{\alpha_1}}$ and $ \mathbf{1}_{J_{\alpha_2}}$ denote the characteristic functions of the sets $I_{\alpha_1}$ and $J_{\alpha_2}$.
Then, 
\begin{equation}
\label{in:triangle}
\| S f \|_{L^2(I \times \R^{n - 1} \times J)} \leq \sum_{\alpha \in \N_0^2} \| S f_\alpha \|_{L^2(I \times \R^{n - 1} \times J)}.
\end{equation}
Our goal is to prove that
\begin{equation}
\label{in:goal}
\| S f_\alpha \|_{L^2(I \times \R^{n - 1} \times J)} \leq C |I|^\theta|J|^\theta |I_{\alpha_1}|^{1/2 - \theta} |J_{\alpha_2}|^{1/2 - \theta} \| f_\alpha \|_{L^2(\R \times \R^n)},
\end{equation}
since the identity 
$ \| f_\alpha \|_{L^2(\R \times \R^n)} = \| f \|_{L^2(I_{\alpha_1} \times \R^{n - 1} \times J_{\alpha_2})}$
together with the inequalities \eqref{in:triangle} and 
\eqref{in:goal} would imply the inequality in the statement for 
$f \in \mathcal{S}(\R \times \R^n)$ satisfying \eqref{cond:support}. 

A simple computation shows that
\[ S f_\alpha (t, x^\prime, x_n) = \frac{1}{(2\pi)^\frac{n-1}{2}} \int_{\R^{n-1}} e^{i x^\prime \cdot \xi^\prime} e^{i t |\xi^\prime|^2} T[ g_\alpha(\centerdot; \xi^\prime)] (t, x_n) \, \dd \xi^\prime \]
with $T$ the Fourier multiplier in $\R^2$ defined in the \cref{lem:multiplierT}
below, and
\[ g_\alpha(y_1, y_2; \xi^\prime) = e^{-i y_1 |\xi^\prime|^2} \mathcal{F} [f_\alpha(y_1, \centerdot, y_2)] (\xi^\prime) \qquad \forall (y_1, y_2) \in \R \times \R. \]
Here $ \mathcal{F} [f_\alpha(y_1, \centerdot, y_2)] $ denotes the Fourier 
transform of the function $ x^\prime \in \R^{n - 1} \mapsto f_\alpha(y_1, x^\prime, y_2) $.
Then, by Plancherel's theorem in $\R^{n - 1}$, we have that
\[ \| S f_\alpha (t, \centerdot, x_n) \|_{L^2 (\R^{n - 1})}^2 = \int_{\R^{n-1}} |T[ g_\alpha(\centerdot; \xi^\prime)] (t, x_n)|^2 \, \dd \xi^\prime. \]
In the \cref{lem:multiplierT} we will state an inequality that we use 
in \eqref{in:usingT} after 
applying the previous identity and changing the order of integration:
\begin{equation}
\label{in:usingT}
\begin{aligned}
\| S f_\alpha \|_{L^2(I \times \R^{n - 1} \times J)}^2 &= \int_{\R^{n-1}} \|T[ g_\alpha(\centerdot; \xi^\prime)] \|_{L^2(I \times J)}^2 \, \dd \xi^\prime \\
& \lesssim (|I| |J|)^{2\theta} (|I_{\alpha_1}| |J_{\alpha_2}|)^{1 - 2\theta} \int_{\R^{n-1}} \|g_\alpha(\centerdot; \xi^\prime)] \|_{L^2(\R^2)}^2  \, \dd \xi^\prime
\end{aligned}
\end{equation}
since $\supp g_\alpha(\centerdot; \xi^\prime) \subset \mathring{I_{\alpha_1}} \times \mathring{J_{\alpha_2}}$
for all $\xi^\prime \in \R^{n - 1}$. Note that
\begin{equation}
\label{id:back}
\begin{aligned}
\int_{\R^{n-1}} \|g_\alpha(\centerdot; \xi^\prime)] \|_{L^2(\R^2)}^2  \, \dd \xi^\prime &= \int_{\R \times \R} \| \mathcal{F} [f_\alpha(t, \centerdot, x_n)] \|_{L^2(\R^{n - 1})}^2 \dd (t, x_n) \\
& = \| f_\alpha \|_{L^2(\R \times \R^n)}^2.
\end{aligned}
\end{equation}
The inequality \eqref{in:usingT} and the identity \eqref{id:back} yield 
\eqref{in:goal} after taking square root on each side. This ends the proof of 
this lemma.
\end{proof}

We end this section by proving the inequality for the 
multiplier $T$ that we have used in the proof of \cref{lem:boundedness_S}.

\begin{lemma} \label{lem:multiplierT}\sl
Let $q$ be the polynomial function $q(\xi) = \xi_1 - \xi_2^2 + i \xi_2$. For 
every $f \in \mathcal{S}(\R^2)$, define $Tf$ as 
\[ Tf (x) = \frac{1}{2\pi} \int_{\R^2} e^{i x \cdot \xi} \frac{1}{q(\xi)} \widehat{f}(\xi) \, \dd \xi,  \qquad \forall x\in \R^2. \]
Then, there exists an absolute constant $C > 0$ such that, for every 
$\theta \in (0, 1/2)$ and every measurable subsets $E$ and $F$ of $\R^2$,
we have that
\[ \| Tf \|_{L^2(E)} \leq C |E|^\theta |F|^{1/2 - \theta} \| f \|_{L^2(\R^2)} \]
for all $f \in \mathcal{S}(\R^2) $ such that $ \supp f \subset \mathring{F} $.
\end{lemma}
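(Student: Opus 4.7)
The plan is to exploit the Schrödinger-like structure of $q(\xi) = \xi_1 - \xi_2^2 + i\xi_2$ (whose zero set is just the origin) by first computing the kernel of $T$ explicitly via contour integration, and then combining pointwise kernel bounds with a Schur-type argument and interpolation in $\theta$.

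First, I would compute the kernel $\tilde K$ of $T$. For fixed $\xi_2$, the function $1/q(\cdot, \xi_2)$ is meromorphic in $\xi_1$ with a simple pole at $\xi_1 = \xi_2^2 - i\xi_2$, lying in the lower or upper half-plane depending on the sign of $\xi_2$. Closing the $\xi_1$-contour according to the sign of $x_1$, the $\xi_1$-integral vanishes unless $x_1 \xi_2 < 0$, and the remaining $\xi_2$-integral produces, after the change $\eta = |\xi_2|$,
$$\tilde K(x_1, x_2) = i\,\mathrm{sgn}(x_1)\int_0^\infty e^{ix_1\eta^2 - i\,\mathrm{sgn}(x_1)\,x_2\eta - |x_1|\eta}\,d\eta,$$
a Fresnel-type integral damped by $e^{-|x_1|\eta}$.

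Next, I would derive pointwise bounds on $\tilde K$. The exponential weight alone yields $|\tilde K(x_1,x_2)| \leq C/|x_1|$, while van der Corput's lemma applied to the quadratic phase (with $|\phi''| = 2|x_1|$) gives the improved bound $|\tilde K(x_1,x_2)| \leq C/\sqrt{|x_1|}$ valid uniformly in $x_2$. When the critical point $\eta^* = \mathrm{sgn}(x_1)\,x_2/(2x_1)$ lies in the positive half-line, a stationary-phase argument furnishes the additional decay $e^{-|x_2|/2}$, coming from the imaginary part $i\xi_2$ of the symbol.

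With these kernel estimates, I would apply a bilinear Schur test to the truncated operator $M_E T M_F$ (multiplication by $\mathbf{1}_E$, $\mathbf{1}_F$), partitioning $E$ and $F$ into dyadic shells in the $x_1$-direction and using the $1/\sqrt{|x_1|}$ bound on the small scales and the $1/|x_1|$ bound combined with exponential decay in $x_2$ on the large scales. In each regime the relevant kernel integral is summable, producing two unilateral estimates of the form $\|M_E T M_F\|_{L^2 \to L^2} \lesssim |F|^{1/2}$ and $\|M_E T M_F\|_{L^2 \to L^2} \lesssim |E|^{1/2}$. Bilinear complex interpolation (Riesz--Thorin applied to the analytic family obtained by lifting $\theta$ to complex values) then yields
$$\|M_E T M_F\|_{L^2 \to L^2} \leq C |E|^\theta |F|^{1/2-\theta}, \qquad \theta \in (0, 1/2),$$
with an absolute constant $C$ independent of $\theta$, $E$, $F$.

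The main obstacle is the non-integrable singularity of $\tilde K$ along $\{x_1 = 0\}$, which reflects the characteristic hypersurface of the underlying Schrödinger-type operator $-i\partial_{x_1} + \partial_{x_2}^2 + \partial_{x_2}$. This singularity prevents the endpoint estimates ($\theta = 0$ and $\theta = 1/2$) from holding in pure form, so the dyadic organization must carefully avoid divergent contributions from $\{|x_1|\sim 0\}$, and both the van der Corput gain and the exponential damping from $\Im q$ must be used simultaneously; the uniformity of $C$ in $\theta$ is then inherited from the explicit interpolation.
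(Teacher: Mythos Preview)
Your kernel computation is essentially correct, and the pointwise bounds $|\tilde K(x)|\lesssim \min(|x_1|^{-1},|x_1|^{-1/2})$ together with the extra $e^{-|x_2|/2}$ damping when the stationary point lies in $(0,\infty)$ are fine. The gap is in the passage from these bounds to the claimed estimate.

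The two ``endpoint'' inequalities $\|M_E T M_F\|_{L^2\to L^2}\lesssim |F|^{1/2}$ and $\|M_E T M_F\|_{L^2\to L^2}\lesssim |E|^{1/2}$ are simply false. The first would in particular give $\|Tf\|_{L^2(\R^2)}\lesssim |F|^{1/2}\|f\|_{L^2}$ for $f$ supported in $F$; but near the origin $q(\xi)\sim \xi_1+i\xi_2$, so $1/q$ has a non--$L^2$ singularity there and, for any $f\in\mathcal S$ with $\widehat f(0)\neq 0$, one has $\|Tf\|_{L^2(B_R)}^2\sim \log R\to\infty$. Thus no bound uniform in $E$ (or, by duality, in $F$) exists. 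You even say as much in your last paragraph, but then the proposed remedy --- ``carefully avoid divergent contributions from $\{|x_1|\sim 0\}$'' and ``lift $\theta$ to complex values'' --- does not amount to an argument: there is no analytic family of operators in sight, and the trivial observation $\min(X,Y)\le X^{2\theta}Y^{1-2\theta}$ is useless once the endpoints fail. A Schur test based on your kernel bounds also fails for arbitrary measurable $E,F$, because $|\tilde K(x)|\sim |x_1|^{-1/2}$ near $\{x_1=0\}$ carries no decay in $x_2$ on the half where the stationary point is outside $(0,\infty)$, so $\int_E|\tilde K(x-y)|\,dx$ need not be controlled by $|E|^{1/2}$.

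The paper avoids physical space entirely. It splits $T=T_{\le\delta}+T_{>\delta}$ by a smooth cutoff on $\{|q(\xi)|\lesssim\delta\}$; for $T_{\le\delta}$ a H\"older/Hausdorff--Young argument (with exponent $p=4\theta$ or its dual, depending on whether $\theta\gtrless 1/4$) produces a bound with a positive power of $\delta$ times $|E|^{1/2}|F|^{1/p-1/2}$ (or the symmetric variant), while $T_{>\delta}$ is $L^2$-bounded by $\delta^{-1}$ via Plancherel. Optimising $\delta=|E|^{-\theta}|F|^{\theta-1/2}$ balances the two and gives the stated inequality directly, in a few lines. If you want to salvage a kernel approach, you would need to replace the false endpoints by genuine restricted weak-type or Lorentz-space bounds that \emph{do} hold and then interpolate; that is possible but considerably longer than the frequency-side proof.
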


\begin{proof}
Consider $\phi \in \mathcal{S}(\R^2)$ such that $ 0 \leq \phi(\xi) \leq 1 $ for 
all $\xi \in \R^2$, $\supp \phi \subset \{ \xi \in \R^2: |\xi| \leq 2 \}$ and
$ \phi(\xi) = 1 $ whenever $|\xi| \leq 1$. For $\delta > 0$, Set
\[ \phi_{\leq \delta} (\xi) = \phi ( \Re q(\xi)/ \delta, \Im q(\xi)/ \delta)  \qquad \forall \xi \in \R^2, \]
and
\[ T_{\leq \delta} f (x) = \frac{1}{2\pi} \int_{\R^2} e^{i x \cdot \xi} \frac{\phi_{\leq \delta} (\xi)}{q(\xi)} \widehat{f}(\xi) \, \dd \xi,  \qquad \forall x\in \R^2 \]
with $f \in \mathcal{S}(\R^2)$. 
If $\theta \in [1/4, 1/2)$, we choose $p = 4 \theta$ and $p^\prime$ its 
conjugate index. By H\"older's inequality first and then Hausdorff--Young 
we have that
\[ \| T_{\leq \delta} f \|_{L^\infty(\R^2)} \leq \frac{1}{2\pi} \Big( \int_{\R^2} \frac{\phi(\xi/\delta)^p}{|\xi|^p} \, \dd \xi \Big)^{1/p} \| \widehat{f} \|_{L^{p^\prime} (\R^2)} \leq C \delta^{2/p -1} \| f \|_{L^p(\R^2)} \]
with $C> 0$ an absolute constant that does not necessarily coincide with the one in the statement. Thus,
\begin{equation}
\label{in:low_freq_1}
\| T_{\leq \delta} f \|_{L^2(E)} \lesssim |E|^{1/2} |F|^{1/p - 1/2} \delta^{2/p -1} \| f \|_{L^2(\R^2)}
\end{equation}
since $ \supp f \subset \mathring{F} $.

If $\theta \in (0, 1/4]$, write $\theta^\prime = 1/2 - \theta$ so that 
$\theta^\prime \in [1/4, 1/2)$. Let $r$ and $r^\prime$ be conjugate indexes
with $r^\prime = 4 \theta^\prime$. By the Hausdorff--Young inequality
\[ \| T_{\leq \delta} f \|_{L^r(\R^2)} \leq \frac{1}{2\pi} \Big( \int_{\R^2} \frac{\phi(\xi/\delta)^{r^\prime}}{|\xi|^{r^\prime}} \, \dd \xi \Big)^{1/{r^\prime}} \| \widehat{f} \|_{L^\infty (\R^2)} \leq C \delta^{2/{r^\prime} -1} \| f \|_{L^1(\R^2)} \]
with $C> 0$ an absolute constant that does not necessarily coincide with the one in the statement. Thus,
\begin{equation}
\label{in:low_freq_2}
\| T_{\leq \delta} f \|_{L^2(E)} \lesssim |E|^{1/2 - 1/r} |F|^{1/2} \delta^{2/{r^\prime} -1} \| f \|_{L^2(\R^2)}
\end{equation}
since $ \supp f \subset \mathring{F} $.

Let $T_{> \delta}$ denote the Fourier multiplier defined by $T_{> \delta} f = T f - T_{\leq \delta} f$ for all $f \in \mathcal{S}(\R^2)$. Since
the symbol of this multiplier is the bounded function
\[ m_\delta : \xi \in \R^n \mapsto \frac{1 -\phi_{\leq \delta} (\xi)}{q(\xi)} \in \C, \]
we have that
\begin{equation}
\label{in:high_freq}
\| T_{> \delta} f \|_{L^2(\R^n)} \leq \| m_\delta \|_{L^\infty(\R^2)} \| f \| _{L^2(\R^2)} \leq \delta^{-1} \| f \| _{L^2(\R^2)}.
\end{equation}

On the one hand, using the inequalities \eqref{in:low_freq_1} and 
\eqref{in:high_freq}
we have that
\[ \| T f \|_{L^2(E)} \lesssim (|E|^{1/2} |F|^{1/p - 1/2} \delta^{2/p -1} + \delta^{-1}) \| f \|_{L^2(\R^2)}.
 \]
Choosing $\delta = |E|^{-\theta} |F|^{\theta - 1/2}$ with 
$\theta \in [1/4, 1/2)$, we have that
$$ |E|^{1/2} |F|^{1/p - 1/2} \delta^{2/p -1} = \delta^{-1},$$
and we obtain the inequality of the statement whenever $1/4 \leq \theta < 1/2$.

On the other hand, using the inequalities \eqref{in:low_freq_2} and 
\eqref{in:high_freq}
we have that
\[ \| T f \|_{L^2(E)} \lesssim (|E|^{1/2 - 1/r} |F|^{1/2} \delta^{2/{r^\prime} -1} + \delta^{-1}) \| f \|_{L^2(\R^2)}.
 \]
Choosing $\delta = |E|^{-\theta} |F|^{\theta - 1/2}$ with 
$\theta \in (0, 1/4]$, we have that
$$ |E|^{1/2 - 1/r} |F|^{1/2} \delta^{2/{r^\prime} -1} = \delta^{-1},$$
and we obtain the inequality of the statement whenever $0 < \theta \leq 1/4$.
\end{proof}

\section{The intial-to-final-state map}\label{sec:ItFmap}

\subsection{An integral identity for physical solutions}

The main goal here is to prove an integral formula that relates the 
initial-to-final-state maps with their corresponding potentials. The
proof is based on a suitable integration-by-parts identity. We end this section
by generalizing that integration-by-parts identity.

\begin{proposition}\label{prop:orthogonality} \sl Let $V_1$ and $V_2$ be 
$L^1((0,T); L^\infty(\R^n))$ with $T>0$.
For every $f, g \in L^2(\R^n)$ we have that
\begin{equation}
i \int_{\R^n} (\mathcal{U}_T^1 - \mathcal{U}_T^2) f \, \overline{g} \; = \int_\Sigma (V_1 - V_2)u_1 \overline{v_2}\,,
\label{id:integral_identity}
\end{equation}
where $u_1$ is the solution of \eqref{pb:IVP} with potential $V_1$ and initial 
data $f$, while $v_2$ is the physical solution of the following 
final-value problem 
\begin{equation}
\label{pb:final_overV2}
	\left\{
		\begin{aligned}
		& i\partial_\tm v_2 = - \Delta v_2 + \overline{V_2} v_2 & & \textnormal{in} \, \Sigma, \\
		& v_2(T, \centerdot) = g &  & \textnormal{in} \, \R^n.
		\end{aligned}
	\right.
\end{equation}
\end{proposition}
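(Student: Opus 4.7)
The plan is to derive \eqref{id:integral_identity} from a two-solution Green identity combined with the reciprocity relation for the propagator associated to $V_2$. Let $u_1 = \mathcal{U}^1 f$ be the physical solution of \eqref{pb:IVP} with potential $V_1$ and initial state $f$, and let $v_2$ be the physical solution of \eqref{pb:final_overV2}. Since both solve their respective equations,
\[ \int_\Sigma \big[(i\partial_\tm + \Delta - V_1) u_1\big]\overline{v_2} \, \dd(t,x) - \int_\Sigma u_1\,\overline{(i\partial_\tm + \Delta - \overline{V_2}) v_2} \, \dd(t,x) = 0. \]
I would expand the two integrands, cancel the spatial Laplacian contributions by integration by parts in $x$, and collapse the two time-derivative terms into a single full derivative $i\partial_\tm(u_1\overline{v_2})$. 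The $V_j$-contributions do not cancel and the fundamental theorem of calculus in $t$ yields the Green-type identity
\[ i\int_{\R^n}\big[u_1(T,\centerdot)\overline{v_2(T,\centerdot)} - u_1(0,\centerdot)\overline{v_2(0,\centerdot)}\big] \, \dd x = \int_\Sigma (V_1-V_2) u_1 \overline{v_2}\,\dd(t,x). \]

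Next I would evaluate the temporal boundary terms. At $t=T$ one has $u_1(T,\centerdot)=\mathcal{U}^1_T f$ and $v_2(T,\centerdot)=g$, producing $i\int_{\R^n}\mathcal{U}^1_T f\,\overline{g}\,\dd x$. To treat $v_2(0,\centerdot)$, I would apply the same Green identity with $u_2 := \mathcal{U}^2 f$ in place of $u_1$: both $u_2$ and $v_2$ are now attached to the potential $V_2$, so the right-hand side becomes $\int_\Sigma (V_2-V_2) u_2 \overline{v_2}=0$. This yields the reciprocity relation
\[ \int_{\R^n} f\,\overline{v_2(0,\centerdot)}\,\dd x = \int_{\R^n} \mathcal{U}^2_T f\,\overline{g}\,\dd x, \]
and substituting it into the Green-type identity produces \eqref{id:integral_identity}.

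The main obstacle is making the integration by parts rigorous when $u_1, u_2, v_2$ only belong to $C([0,T];L^2(\R^n))$: at this regularity neither the spatial integration by parts nor the pointwise-in-time fundamental theorem of calculus is automatic. I would address this by density, approximating $f, g \in L^2(\R^n)$ by Schwartz sequences $f_k, g_k$ and, if needed, approximating $V_1, V_2$ by smooth truncations so that the corresponding solutions become classical and all the manipulations above are justified by routine Fubini and vanishing-at-infinity arguments. To pass to the limit, I would use the $L^2$-boundedness of the evolution maps, the continuity of $t\mapsto u_1(t,\centerdot)\overline{v_2(t,\centerdot)}$ in $L^1(\R^n)$, and H\"older's inequality in the form
\[ \int_\Sigma |V_j u_1^k\overline{v_2^k}|\,\dd(t,x) \leq \|V_j\|_{L^1((0,T);L^\infty(\R^n))}\sup_{t\in[0,T]}\|u_1^k(t,\centerdot)\|_{L^2(\R^n)}\sup_{t\in[0,T]}\|v_2^k(t,\centerdot)\|_{L^2(\R^n)}, \]
which is exactly the place where the hypothesis $V_j \in L^1((0,T);L^\infty(\R^n))$ is consumed.
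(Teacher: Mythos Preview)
Your argument is correct and follows essentially the same route as the paper: derive the Green-type identity $i\int_{\R^n}[u_1(T)\overline{v_2(T)}-u_1(0)\overline{v_2(0)}]=\int_\Sigma(V_1-V_2)u_1\overline{v_2}$, then apply the same identity to the pair $(u_2,v_2)$ with $u_2=\mathcal{U}^2 f$ to eliminate the $t=0$ boundary term. The only notable difference is in the justification of the integration by parts: the paper isolates this as a separate proposition (the statement immediately following, \cref{lem:integration_by_parts}) and proves it by mollifying and cutting off the solutions $u,v$ themselves in $\Sigma_\delta=(\delta,T-\delta)\times\R^n$, rather than approximating the data $f,g$ and the potentials. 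This avoids having to argue that smoother data yield solutions with enough spatial decay for the Laplacian terms to vanish at infinity, and it works directly under the minimal hypothesis $(i\partial_\tm+\Delta)u,(i\partial_\tm+\Delta)v\in L^1((0,T);L^2(\R^n))$.
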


In order to prove this proposition, we need the following integration-by-parts 
formula.

\begin{proposition}\label{lem:integration_by_parts} \sl For every 
$u , v \in C([0, T]; L^2(\R^n)) $
such that $(i\partial_\tm + \Delta) u $ and $(i\partial_\tm + \Delta) v $
belong to $L^1((0,T); L^2(\R^n))$, we have
\[\int_\Sigma \big[ (i\partial_\tm + \Delta) u \overline{v} - u \overline{(i\partial_\tm + \Delta)v} \big] \, = i \int_{\R^n} \big[ 
u(T, \centerdot) \overline{v (T,\centerdot)} - u(0, \centerdot) \overline{v (0,\centerdot)} \big] \,. \]
\end{proposition}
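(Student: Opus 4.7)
The idea is to verify the identity first for smooth, rapidly decreasing pairs by a pointwise calculation, and then extend to the stated regularity by a density argument based on Duhamel's formula. At the formal level,
\[
(i\partial_\tm + \Delta)u\, \overline{v} - u\, \overline{(i\partial_\tm + \Delta)v} = i\partial_\tm(u\overline{v}) + \Delta u\, \overline{v} - u\, \Delta\overline{v},
\]
and, whenever $u(\tm, \centerdot), v(\tm, \centerdot)$ are Schwartz, integration over $\R^n$ kills the last two terms by Green's identity (no boundary contribution) and the remaining integration of $i\partial_\tm(u\overline{v})$ from $0$ to $T$ yields the right-hand side of the proposition. This handles the case $u, v \in C^\infty([0,T]; \mathcal{S}(\R^n))$, where every manipulation is justified by absolute integrability and Fubini.

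To reduce the general case to this one, I would use that $u$ satisfies Duhamel's formula
\[
u(\tm, \centerdot) = e^{i\tm \Delta} u(0, \centerdot) - i \int_0^\tm e^{i(\tm-s)\Delta} F(s, \centerdot)\, \dd s,
\qquad F := (i\partial_\tm + \Delta)u,
\]
because both sides lie in $C([0,T]; L^2(\R^n))$, share the same initial datum, and have the same image under $i\partial_\tm + \Delta$, so their difference solves the free Schr\"odinger equation with null initial data. I would then choose $\phi_k \in \mathcal{S}(\R^n)$ with $\phi_k \to u(0, \centerdot)$ in $L^2(\R^n)$, and $F_k \in C_c^\infty((0,T); \mathcal{S}(\R^n))$ with $F_k \to F$ in $L^1((0,T); L^2(\R^n))$, and define $u_k$ by the same Duhamel formula with $(\phi_k, F_k)$ in place of $(u(0,\centerdot), F)$. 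Since $e^{i\tm \Delta}$ preserves $\mathcal{S}(\R^n)$ and is unitary on $L^2(\R^n)$, the approximants satisfy $u_k \in C^\infty([0,T]; \mathcal{S}(\R^n))$, $(i\partial_\tm + \Delta) u_k = F_k$, and $u_k \to u$ in $C([0,T]; L^2(\R^n))$; in particular $u_k(0,\centerdot), u_k(T,\centerdot) \to u(0,\centerdot), u(T,\centerdot)$ in $L^2(\R^n)$. The same construction applies to $v$, producing $v_k$ with $G_k := (i\partial_\tm + \Delta)v_k \to G := (i\partial_\tm + \Delta)v$.

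With the Schwartz-class identity in hand for each $(u_k, v_k)$, passage to the limit is a continuity argument. The two boundary pairings converge via continuity of the $L^2(\R^n)$ inner product. For the bulk terms, Cauchy--Schwarz gives
\[
\Bigl| \int_\Sigma (F_k \overline{v_k} - F \overline{v})\, \Bigr| \le \|F_k - F\|_{L^1((0,T); L^2)}\|v_k\|_{C([0,T]; L^2)} + \|F\|_{L^1((0,T); L^2)}\|v_k - v\|_{C([0,T]; L^2)},
\]
both of which tend to zero; the term $\int_\Sigma u_k \overline{G_k}$ is handled symmetrically. The main obstacle, which I expect to be mild, is to ensure that the Duhamel approximants are genuinely smooth in $(\tm, \x)$; this reduces to checking that $e^{i\tm\Delta}$ maps $\mathcal{S}(\R^n)$ to $C^\infty(\R; \mathcal{S}(\R^n))$ (immediate on the Fourier side) and that time convolution against $F_k \in C_c^\infty((0,T); \mathcal{S}(\R^n))$ preserves this regularity (differentiation under the integral).
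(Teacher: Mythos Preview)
Your argument is correct, but it differs from the paper's. The paper never invokes the Schr\"odinger group or Duhamel; instead it restricts to $\Sigma_\delta=(\delta,T-\delta)\times\R^n$, approximates $u$ and $v$ there by $w_\varepsilon(t,x)=\chi_\varepsilon(x)\,(\varphi_\varepsilon\ast\tilde w)(t,x)$ (a spatial cutoff times a space--time mollification of the trivial extension), checks that $w_\varepsilon\to w$ in $C([\delta,T-\delta];L^2)$ and $(i\partial_\tm+\Delta)w_\varepsilon\to(i\partial_\tm+\Delta)w$ in $L^1((\delta,T-\delta);L^2)$, applies the classical identity for smooth compactly supported functions, and finally lets $\varepsilon\to0$ and then $\delta\to0$. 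Your route via Duhamel is cleaner in that it sidesteps the commutator terms $\nabla\chi_\varepsilon\cdot\nabla(\varphi_\varepsilon\ast\tilde w)$ and $\Delta\chi_\varepsilon\,(\varphi_\varepsilon\ast\tilde w)$ and produces approximants that are Schwartz in space for every $t$ rather than merely compactly supported; it also makes the convergence of the boundary terms immediate via $C([0,T];L^2)$ convergence. The paper's approach, on the other hand, is more self-contained (it does not presuppose uniqueness for the free equation in $C([0,T];L^2)$, which your Duhamel step implicitly uses), and its mollification-plus-cutoff machinery is reused almost verbatim in the subsequent \cref{lem:_generalization_integration_by_parts}, where one of the two functions is only assumed to lie in $L^2(\tilde\Sigma)$ and a Duhamel representation is not available.
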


In order to focus first on how to relate the 
initial-to-final-state maps with the corresponding potentials, we postpone the 
justification of the integration-by-parts formula after proving
the \cref{prop:orthogonality}.

\begin{proof}[Proof of the \cref{prop:orthogonality}]
Since $u_1$ and $v_2$ solve the initial-value problem \eqref{pb:IVP} 
and final-value problem \eqref{pb:final_overV2},
one has that the right-hand side of 
\eqref{id:integral_identity} is equal to
\[\int_\Sigma \big[ (i\partial_\tm + \Delta) u_1 \overline{v_2} - u_1  \overline{(i\partial_\tm + \Delta) v_2} \big] \,. \]
Since $u_1, v_2 \in C([0, T]; L^2(\R^n))$, and
the potentials $V_1$ and $\overline{V_2}$ belong to the space 
$L^1((0,T); L^\infty(\R^n))$,
the \cref{lem:integration_by_parts} yields
\begin{equation}
\int_\Sigma (V_1 - V_2)u_1 \overline{v_2} \, = i \int_{\R^n} \big[ 
u_1(T, \centerdot) \overline{v_2 (T,\centerdot)} - u_1(0, \centerdot) \overline{v_2 (0,\centerdot)} \big] \,.
\label{id:integration_by_parts}
\end{equation}
Now we introduce a third solution $u_2$ of the initial-value problem \eqref{pb:IVP} with potential $V_2$ and initial data $f$.
If, in the computations to obtain \eqref{id:integration_by_parts}, we replace $V_1$ and $u_1$ by $V_2$ and $u_2$ respectively, we can conclude that
\begin{equation}
i \int_{\R^n} \big[ 
u_2(T, \centerdot) \overline{v_2 (T,\centerdot)} - u_2(0, \centerdot) \overline{v_2 (0,\centerdot)} \big] = 0.
\label{id:trivial_orthogonality}
\end{equation}
Subtracting the left-hand side of \eqref{id:trivial_orthogonality} to the right-hand side of \eqref{id:integration_by_parts}, we get
\[\int_\Sigma (V_1 - V_2)u_1 \overline{v_2} \, = i \int_{\R^n} \big[ 
u_1(T, \centerdot) - u_2 (T, \centerdot)\big] \overline{v_2 (T,\centerdot)} \]
since $u_1(0, \centerdot) = u_2(0, \centerdot)$. Rewriting this identity in terms of the $\mathcal{U}_T^1 f$, $\mathcal{U}_T^2 f$ and $g$, we prove \eqref{id:integral_identity}.
\end{proof}

\begin{proof}[Proof of the \cref{lem:integration_by_parts}]
Start by noticing that under the assumptions of the 
\cref{lem:integration_by_parts}, we can apply the dominate convergence theorem 
to ensure that
\begin{equation}
\label{lim:delta}
\int_\Sigma \big[ (i\partial_\tm + \Delta) u \overline{v} - u \overline{(i\partial_\tm + \Delta)v} \big] \, = \lim_{\delta \to 0} \int_{\Sigma_\delta} \big[ (i\partial_\tm + \Delta) u \overline{v} - u \overline{(i\partial_\tm + \Delta)v} \big]
\end{equation}
where $\Sigma_\delta = (\delta, T - \delta) \times \R^n$.
The idea now is to approximate $u$ and $v$ by $u_\varepsilon$ and 
$v_\varepsilon$ respectively, which will be smooth in $\Sigma_\delta$ and 
compactly supported in space.

Consider a smooth cut-off $\chi \in \mathcal{S}(\R^n)$ 
such that $ 0 \leq \chi(x) \leq 1 $ for 
all $x \in \R^n$, $\supp \chi \subset \{ x \in \R^n: |x| \leq 2 \}$ and
$ \chi(x) = 1 $ whenever $|x| \leq 1$. Consider $\phi \in \mathcal{S}(\R^n)$ and
$\psi \in \mathcal{S}(\R)$ with compact supports and such that
$\supp \psi \subset \{ t \in \R : |t| \leq 1 \}$, 
$\int_{\R^n} \phi = \int_\R \psi = 1$ and $\phi(x), \psi(t) \in [0, \infty)$
for all $x \in \R^n$ and $t \in \R$. Whenever $\varepsilon > 0$, define 
$\chi_\varepsilon (x) = \chi (\varepsilon x)$ for $x \in \R^n$,
$\phi_\varepsilon (x) = \varepsilon^{-n} \phi (x/\varepsilon) $ 
for $x \in \R^n$, and 
$\psi_\varepsilon (t) = \varepsilon^{-1} \psi (t/\varepsilon) $ 
for $t \in \R$.

For $w \in C([0, T]; L^2(\R^n))$ and $\varepsilon < \delta$, 
we consider
\[w_\varepsilon: (t, x) \in \Sigma_\delta \mapsto \chi_\varepsilon (x) \int_{(0, T)} \psi_\varepsilon (t - s) \Big( \int_{\R^n} \phi_{\varepsilon} (x - y) w (s, y) \, \dd y \Big) \, \dd s.\]
One can check that
\begin{equation}
\label{lim:t-L2_approx}
\lim_{\varepsilon \to 0} \| w(t, \centerdot) - w_\varepsilon (t, \centerdot)  \|_{L^2(\R^n)} = 0  \qquad \forall t \in [\delta, T - \delta].
\end{equation}
Furthermore, if
$(i\partial_\tm + \Delta) w \in L^1((0, T); L^2(\R^n))$, then
\begin{equation}
\label{lim:L1L2_approx}
\lim_{\varepsilon \to 0} \| (i\partial_\tm + \Delta)w - (i\partial_\tm + \Delta)w_\varepsilon  \|_{L^1((\delta, T - \delta); L^2(\R^n))} = 0.
\end{equation}
Let us give an explanation about \eqref{lim:L1L2_approx}.
For notational convenience, we write 
$\varphi_\varepsilon (t, x) = \psi_\varepsilon (t) \phi_\varepsilon (x) $ 
for $(t, x) \in \R \times \R^n$, and let $\tilde{w}$ denote the 
trivial extension of $w$
\[ \tilde w (t, \centerdot) = \left\{  
\begin{aligned} 
&w(t, \centerdot) &  & t \in [0, T], \\
&0 & & t \notin [0, T].
\end{aligned}
\right.\]
Thus, we have that
$ w_\varepsilon (t, x) = \chi_\varepsilon (x) (\varphi_\varepsilon \ast \tilde{w}) (t, x) $
for all $(t,x) \in \Sigma_\delta$.
In order to show that \eqref{lim:L1L2_approx} holds, 
let us compute
\[(i\partial_\tm + \Delta)w_\varepsilon = \chi_\varepsilon (i\partial_\tm + \Delta) (\varphi_\varepsilon \ast \tilde{w}) + 2 \nabla \chi_\varepsilon \cdot \nabla (\varphi_\varepsilon \ast \tilde{w}) + \Delta \chi_\varepsilon (\varphi_\varepsilon \ast \tilde{w}) \enspace \textnormal{in} \enspace \Sigma_\delta. \]
Thus, \eqref{lim:L1L2_approx} follows from
\begin{equation}
\label{lim:chi_varphi}
\lim_{\varepsilon \to 0} \| (i\partial_\tm + \Delta)w - \chi_\varepsilon (i\partial_\tm + \Delta) (\varphi_\varepsilon \ast \tilde{w})  \|_{L^1((\delta,T - \delta); L^2(\R^n))} = 0
\end{equation}
and
\[ \lim_{\varepsilon \to 0} \Big( \| \nabla \chi_\varepsilon \cdot \nabla (\varphi_\varepsilon \ast \tilde{w}) \|_{L^1((\delta,T - \delta); L^2(\R^n))} + \| \Delta \chi_\varepsilon (\varphi_\varepsilon \ast \tilde{w}) \|_{L^1((\delta,T - \delta); L^2(\R^n))} \Big) = 0. \]
We just make a comment on how to prove \eqref{lim:chi_varphi}. 
By the triangle inequality
\begin{equation}
\label{in:triangle&others}
\begin{aligned}
\| (i\partial_\tm + \Delta)w - \chi_\varepsilon (i\partial_\tm & + \Delta) (\varphi_\varepsilon \ast \tilde{w})  \|_{L^1((\delta,T - \delta); L^2(\R^n))} \\ 
\leq & \, \| (i\partial_\tm + \Delta) w - \varphi_\varepsilon \ast [(i\partial_\tm + \Delta) \tilde{w}]  \|_{L^1((\delta,T - \delta); L^2(\R^n))} \\ 
& + \| (1 - \chi_\varepsilon)(i\partial_\tm + \Delta)w \|_{L^1((\delta,T - \delta); L^2(\R^n))}.
\end{aligned}
\end{equation}
To write the first term on the right-hand side as above, we have used
$\| \chi_\varepsilon \|_{L^\infty(\R^n)} = 1$, and 
$(i\partial_\tm + \Delta) (\varphi_\varepsilon \ast \tilde{w}) = \varphi_\varepsilon \ast [(i\partial_\tm + \Delta) \tilde{w}]$.
It is obvious that
\[ \lim_{\varepsilon \to 0} \| (1 - \chi_\varepsilon) (i\partial_\tm + \Delta)w(t, \centerdot) \|_{L^2(\R^n)} = 0  \enspace \textnormal{for almost every}\, t \in (\delta, T - \delta). \]
Hence, by the dominate convergence theorem,
\[ \lim_{\varepsilon \to 0} \| (1 - \chi_\varepsilon)(i\partial_\tm + \Delta)w \|_{L^1((\delta, T - \delta); L^2(\R^n))} = 0. \]
To conclude that \eqref{lim:chi_varphi} holds, we still have to analyse the 
first term on the right-hand side of the inequality \eqref{in:triangle&others}.
Note that since $\varepsilon < \delta$, we have that
\[ \varphi_\varepsilon \ast [(i\partial_\tm + \Delta) \tilde{w}] (t, x) = \int_\Sigma \varphi_\varepsilon (t - s, x- y) [(i\partial_\tm + \Delta)w] (s, y) \, \dd (s, y) \quad \textnormal{a.e.} \, (t, x) \in \Sigma_\delta. \]
Hence, one can check by using standard arguments that
\[\lim_{\varepsilon \to 0} \| (i\partial_\tm + \Delta) w - \varphi_\varepsilon \ast [(i\partial_\tm + \Delta) \tilde{w}]  \|_{L^1((\delta,T - \delta); L^2(\R^n))} = 0.\]
Therefore, \eqref{lim:L1L2_approx} holds.

We now apply these results for the corresponding 
$u_\varepsilon$ and $v_\varepsilon$.
By the standard integration-by-parts rules we have that
\[\int_{\Sigma_\delta} \big[ (i\partial_\tm + \Delta) u_\varepsilon \overline{v_\varepsilon} - u_\varepsilon \overline{(i\partial_\tm + \Delta)v_\varepsilon} \big] \, = i \int_{\R^n} \big[ 
u_\varepsilon(T - \delta, \centerdot) \overline{v_\varepsilon (T - \delta,\centerdot)} - u_\varepsilon(\delta, \centerdot) \overline{v_\varepsilon (\delta,\centerdot)} \big] \,. \]
After \eqref{lim:t-L2_approx} and \eqref{lim:L1L2_approx} we can conclude,
by the dominate convergence theorem, that
\[\int_{\Sigma_\delta} \big[ (i\partial_\tm + \Delta) u \overline{v} - u \overline{(i\partial_\tm + \Delta)v} \big] \, = i \int_{\R^n} \big[ 
u(T - \delta, \centerdot) \overline{v (T - \delta,\centerdot)} - u(\delta, \centerdot) \overline{v (\delta,\centerdot)} \big] \,. \]
The limits \eqref{lim:delta} and
\[ \int_{\R^n} \big[ 
u(T, \centerdot) \overline{v (T,\centerdot)} - u(0, \centerdot) \overline{v (0,\centerdot)} \big] = \lim_{\delta \to 0} \int_{\R^n} \big[ 
u(T - \delta, \centerdot) \overline{v (T - \delta,\centerdot)} - u(\delta, \centerdot) \overline{v (\delta,\centerdot)} \big]\]
yield the identity in the statement.
\end{proof}

We end this section by proving a generalization of the integration-by-parts
formula that will be useful at a later stage.
\begin{proposition}\label{lem:_generalization_integration_by_parts} \sl 
For every $u\in C([0, T]; L^2(\R^n)) $ such that 
$(i\partial_\tm + \Delta) u \in L^2(\Sigma) $ and
$u(0,\centerdot) = u(T, \centerdot) = 0$,
and every $v \in L^2(\tilde \Sigma)$ such that
$(i\partial_\tm + \Delta) v \in L^2(\tilde \Sigma)$ with
$\tilde{\Sigma} = (-\delta, T + \delta) \times \R^n$ and $\delta > 0$, we have
\[\int_\Sigma (i\partial_\tm + \Delta) u \overline{v} = \int_\Sigma u \overline{(i\partial_\tm + \Delta)v} \,. \]
\end{proposition}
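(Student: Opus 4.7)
The plan is to reduce to \cref{lem:integration_by_parts} by regularizing $v$ in the time variable, exploiting the fact that $v$ is defined on a time interval strictly larger than $[0, T]$. Since $v$ only has $L^2$ regularity in time, it cannot be plugged directly into the previous lemma, but the extra room $\delta$ allows us to approximate it by smooth-in-$t$ functions without picking up boundary contributions.

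First I would fix a one-dimensional mollifier $\psi_\varepsilon \in C_c^\infty(\R)$ with $\supp \psi_\varepsilon \subset (-\varepsilon, \varepsilon)$ and $\int_\R \psi_\varepsilon = 1$, and for $0 < \varepsilon < \delta$ define $v_\varepsilon = \psi_\varepsilon \ast_\tm v$, viewing $v$ as an $L^2$-valued function of $t \in (-\delta, T+\delta)$. Because $\varepsilon < \delta$, the convolution for $t \in [0, T]$ uses only values of $v$ inside $\tilde \Sigma$ and produces $v_\varepsilon \in C([0, T]; L^2(\R^n))$. Since convolution in $t$ commutes with the constant-coefficient operator $i\partial_\tm + \Delta$, we have
\[ (i\partial_\tm + \Delta) v_\varepsilon = \psi_\varepsilon \ast_\tm [(i\partial_\tm + \Delta) v] \]
on $\Sigma$, which belongs to $C([0, T]; L^2(\R^n))$ as well.

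Next I would apply \cref{lem:integration_by_parts} to the pair $(u, v_\varepsilon)$. The hypotheses are met: $u \in C([0, T]; L^2(\R^n))$ with $(i\partial_\tm + \Delta) u \in L^2(\Sigma) \subset L^1((0, T); L^2(\R^n))$ by Cauchy--Schwarz in $t$ on the finite interval, and $v_\varepsilon$ enjoys even stronger regularity. Because $u(0, \centerdot) = u(T, \centerdot) = 0$, both boundary terms vanish, yielding
\[ \int_\Sigma (i\partial_\tm + \Delta) u \, \overline{v_\varepsilon} = \int_\Sigma u \, \overline{(i\partial_\tm + \Delta) v_\varepsilon}. \]

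Finally I would pass to the limit $\varepsilon \to 0^+$. Standard mollification in the Bochner setting gives $v_\varepsilon \to v$ and $(i\partial_\tm + \Delta) v_\varepsilon \to (i\partial_\tm + \Delta) v$ in $L^2(\Sigma)$. Since $(i\partial_\tm + \Delta) u \in L^2(\Sigma)$, and $u \in L^2(\Sigma)$ (because $u \in C([0, T]; L^2(\R^n))$ and $T < \infty$), the Cauchy--Schwarz inequality on $\Sigma$ shows that both sides of the displayed identity converge to their unmollified counterparts, giving the claim. The only delicate point---and the one I would write out carefully---is the identity $(i\partial_\tm + \Delta) v_\varepsilon = \psi_\varepsilon \ast_\tm [(i\partial_\tm + \Delta) v]$, which is a routine distributional computation made rigorous by the strict containment $[0, T] \Subset (-\delta, T+\delta)$; all other steps are soft.
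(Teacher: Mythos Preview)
Your argument is correct. It differs from the paper's proof in which function gets regularized: you mollify $v$ in time and then invoke \cref{lem:integration_by_parts} directly, letting the boundary terms drop out because $u(0,\centerdot)=u(T,\centerdot)=0$; the paper instead extends $u$ by zero to $\tilde\Sigma$, mollifies $\tilde u$ in space and time and multiplies by a spatial cutoff to obtain $u_\varepsilon\in\mathcal{D}(\tilde\Sigma)$, and then pairs $u_\varepsilon$ against $(i\partial_\tm+\Delta)v$ via the distributional definition. Your route is shorter and reuses the earlier lemma rather than redoing the approximation from scratch; the paper's route avoids appealing to \cref{lem:integration_by_parts} and works purely with the distributional pairing, at the cost of also having to handle the spatial cutoff terms $\nabla\chi_\varepsilon\cdot\nabla(\varphi_\varepsilon\ast\tilde u)$ and $\Delta\chi_\varepsilon(\varphi_\varepsilon\ast\tilde u)$. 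Either approach exploits the same essential point, namely that the strict inclusion $[0,T]\Subset(-\delta,T+\delta)$ gives room to mollify without boundary effects.
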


\begin{proof}
Consider $\tilde{u} \in C([-\delta, T + \delta]; L^2(\R^n))$ 
the trivial extension of $u$ given by
\[\tilde u (t, x) = \left\{ \begin{aligned}
& u (t, x) & & (t, x) \in \Sigma,\\
& 0 & & (t, x) \in \tilde \Sigma \setminus \Sigma.
\end{aligned} \right.\]
Note that
\[ \langle (i\partial_\tm + \Delta) \tilde u, \phi \rangle = \int_\Sigma (i\partial_\tm + \Delta) u \, \phi \]
for all $\phi \in \mathcal{D}(\tilde{\Sigma})$, which implies that
$(i\partial_\tm + \Delta) \tilde{u} \in L^2(\tilde \Sigma)$.

For  $\varepsilon \in (0, \delta)$, define
\[ u_\varepsilon : (t, x) \in \tilde{\Sigma} \mapsto \chi_\varepsilon(x) [\varphi_\varepsilon \ast \tilde{u}] (t, x) \]
with $\varphi_\varepsilon (t, x) = \psi_\varepsilon (t) \phi_\varepsilon (x)$
for all $(t, x) \in \R \times \R^n$, and $\chi_\varepsilon$, $\phi_\varepsilon$ 
and $\psi_\varepsilon$ as in the proof of the \cref{lem:integration_by_parts}.
Then,
\[ \int_\Sigma u \overline{(i\partial_\tm + \Delta)v} \, = \int_{\tilde{\Sigma}} (\tilde{u} - u_\varepsilon) \overline{(i\partial_\tm + \Delta)v} \,
+ \int_{\tilde{\Sigma}} u_\varepsilon \overline{(i\partial_\tm + \Delta)v} \]
with
\[ \lim_{\varepsilon \to 0} \| \tilde{u} - u_\varepsilon \|_{L^2(\tilde \Sigma)} = 0. \]
This implies that
\begin{equation}
\label{lim:left}
\int_\Sigma u \overline{(i\partial_\tm + \Delta)v} \, = \lim_{\varepsilon \to 0} \int_{\tilde{\Sigma}} u_\varepsilon \overline{(i\partial_\tm + \Delta)v}.
\end{equation}
Since $u_\varepsilon \in \mathcal{D} (\tilde{\Sigma})$, we have by the 
distributional definition of $i\partial_\tm + \Delta$ that
\begin{equation}
\label{id:distributional_derivative}
\int_{\tilde{\Sigma}} u_\varepsilon \overline{(i\partial_\tm + \Delta)v} \, = \int_{\tilde{\Sigma}} (i\partial_\tm + \Delta)u_\varepsilon \overline{v} \, .
\end{equation}
Additionally,
\begin{align*}
(i\partial_\tm + \Delta)u_\varepsilon = \chi_\varepsilon (i\partial_\tm + \Delta) [\varphi_\varepsilon \ast \tilde{u}] + 2 \nabla \chi_\varepsilon \cdot \nabla (\varphi_\varepsilon \ast \tilde{u}) + \Delta \chi_\varepsilon (\varphi_\varepsilon \ast \tilde{u}).
\end{align*}
Since
$(i\partial_\tm + \Delta) [\varphi_\varepsilon \ast \tilde{u}] = \varphi_\varepsilon \ast (i\partial_\tm + \Delta) \tilde{u} $, and
$(i\partial_\tm + \Delta) \tilde{u} \in L^2(\tilde \Sigma)$, we have that
\[ \lim_{\varepsilon \to 0} \| (i\partial_\tm + \Delta) \tilde{u} - \chi_\varepsilon (i\partial_\tm + \Delta) [\varphi_\varepsilon \ast \tilde{u}] \|_{L^2(\tilde \Sigma)} = 0.\]
One can check that
\begin{equation}
\label{lim:left-over}
\lim_{\varepsilon \to 0} \Big( \| 2 \nabla \chi_\varepsilon \cdot \nabla (\varphi_\varepsilon \ast \tilde{u}) \|_{L^2(\tilde \Sigma)} + \| \Delta \chi_\varepsilon (\varphi_\varepsilon \ast \tilde{u}) \|_{L^2(\tilde \Sigma)} \Big) = 0.
\end{equation}
Since $v \in L^2(\tilde \Sigma)$, we have that
\begin{equation}
\label{lim:right}
\lim_{\varepsilon \to 0} \int_{\tilde{\Sigma}} (i\partial_\tm + \Delta)u_\varepsilon \overline{v} \, = \int_{\Sigma} (i\partial_\tm + \Delta) u \overline{v} \, .
\end{equation}
Taking limits as $\varepsilon$ goes to zero at each side of the identity 
\eqref{id:distributional_derivative}, and using \eqref{lim:left} and 
\eqref{lim:right}, we obtain the identity stated.
\end{proof}

\subsection{A collection of technical results}
In this section, we will present and prove a number of results that will be 
useful in the next section to obtain the orthogonal relation for
pairs of CGO solutions.

It is convenient to recall the
well-posedness of the following problem.
\begin{equation}
\label{pb:non-homogenous}
	\left\{
		\begin{aligned}
		& (i\partial_\tm + \Delta - V) u = F & & \textnormal{in} \, \Sigma, \\
		& u(0, \centerdot) = 0 &  & \textnormal{in} \, \R^n.
		\end{aligned}
	\right.
\end{equation}
Consider $V \in L^1((0,T); L^\infty(\R^n))$. Then, for every 
$ F \in L^1((0,T); L^2(\R^n)) $, there exists a unique 
$ u \in C([0, T]; L^2(\R^n)) $
solving the problem \eqref{pb:non-homogenous}. Moreover, the linear map 
$ F \in L^1((0,T); L^2(\R^n)) \mapsto u \in C([0, T]; L^2(\R^n)) $ is bounded.
These facts have been proved for example in \cite{zbMATH05013664}.

\begin{lemma} \label{lem:orthogonal_rewardPB} \sl Consider $V \in L^1((0,T); L^\infty(\R^n))$ and 
$ F \in L^1((0,T); L^2(\R^n)) $ such that
\[ \int_\Sigma F \overline{v} \, = 0 \]
for $v \in C([0, T]; L^2(\R^n))$ solution of the final-value problem
\[
\left\{
		\begin{aligned}
		& i\partial_\tm v = - \Delta v + \overline{V} v & & \textnormal{in} \, \Sigma, \\
		& v(T, \centerdot) = g &  & \textnormal{in} \, \R^n.
		\end{aligned}
	\right.
\]
with every $g \in L^2 (\R^n)$. Then, the solution $u \in C([0, T]; L^2(\R^n))$
of the problem \eqref{pb:non-homogenous} satisfies that
\[ u(T, \centerdot) = 0. \]
\end{lemma}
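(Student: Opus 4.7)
The plan is to apply the integration-by-parts formula from \cref{lem:integration_by_parts} to the pair $(u, v)$, where $u$ is the given solution of \eqref{pb:non-homogenous} and $v$ is an arbitrary solution of the stated final-value problem with some $g \in L^2(\R^n)$. The orthogonality hypothesis will force the boundary term at $t = T$ to vanish against every $g$, which yields $u(T, \centerdot) = 0$.

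First I would verify the hypotheses of \cref{lem:integration_by_parts}. By the well-posedness results recalled just before this lemma, both $u$ and $v$ belong to $C([0, T]; L^2(\R^n))$. Moreover, from the equations satisfied by $u$ and $v$ we have
\[ (i\partial_\tm + \Delta) u = F + V u, \qquad (i\partial_\tm + \Delta) v = \overline{V} v, \]
and since $V, \overline{V} \in L^1((0, T); L^\infty(\R^n))$ while $u, v \in C([0, T]; L^2(\R^n))$, both right-hand sides belong to $L^1((0, T); L^2(\R^n))$, as does $F$ by assumption.

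Next I would plug these identities into the integration-by-parts formula:
\[ \int_\Sigma \bigl[ (F + V u)\overline{v} - u \, \overline{\overline{V} v} \bigr] \, = i \int_{\R^n} \bigl[ u(T, \centerdot) \overline{v(T, \centerdot)} - u(0, \centerdot) \overline{v(0, \centerdot)} \bigr]. \]
The terms involving $V$ cancel on the left, leaving $\int_\Sigma F \overline{v}$, which equals $0$ by hypothesis. On the right, $u(0, \centerdot) = 0$ and $v(T, \centerdot) = g$, so we obtain
\[ 0 = i \int_{\R^n} u(T, \centerdot) \, \overline{g}. \]

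Finally, since $g \in L^2(\R^n)$ was arbitrary, this duality pairing forces $u(T, \centerdot) = 0$, which is the desired conclusion. There is no real obstacle here: the argument is a direct manipulation of the integration-by-parts identity, and the only point requiring a line of justification is the regularity check ensuring \cref{lem:integration_by_parts} applies, which follows immediately from the well-posedness statement for \eqref{pb:non-homogenous} and the corresponding final-value problem.
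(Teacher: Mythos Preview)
Your proof is correct and follows essentially the same approach as the paper's: both apply \cref{lem:integration_by_parts} to the pair $(u,v)$, use the equations to cancel the potential terms, and then invoke the orthogonality hypothesis together with $u(0,\centerdot)=0$ and $v(T,\centerdot)=g$ to conclude via duality. The only cosmetic difference is that you substitute $(i\partial_\tm+\Delta)u = F+Vu$ and $(i\partial_\tm+\Delta)v=\overline{V}v$ before applying the formula, whereas the paper adds and subtracts $Vu\overline{v}$ afterward.
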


\begin{proof}
Consider $g \in L^2(\R^n)$, and let $v$ denote the solution
of the corresponding final-value problem. Then, by the 
\cref{lem:integration_by_parts} we have
\[ i \int_{\R^n} u(T, \centerdot) \overline{g} \, = \int_\Sigma \big[ (i\partial_\tm + \Delta) u \overline{v} - u \overline{(i\partial_\tm + \Delta)v} \big] \, .  \]
Note that the \cref{lem:integration_by_parts} can be applied because 
$ F \in L^1((0,T); L^2(\R^n)) $, 
$V \in L^1((0,T);L^\infty(\R^n))$, and $u$ and $v$ belongs to 
$C([0, T]; L^2(\R^n))$.
Adding and subtracting $V u \overline{v}$ we have, by 
\eqref{pb:non-homogenous}, that
\[ i \int_{\R^n} u(T, \centerdot) \overline{g} \, = \int_\Sigma F \overline{v} \, - \int_\Sigma u \overline{(i\partial_\tm + \Delta - \overline{V})v} \, .  \]
The first term on the right-hand side is assumed to vanish, 
while the second term vanishes because $v$ is
the solution of the final-value problem.
Hence,
\[\int_{\R^n} u(T, \centerdot) \overline{g} \, = 0\]
Since $g$ is arbitrary, we can conclude that $u(T, \centerdot) = 0$.
\end{proof}

There is a symmetric version of this lemma, that we state without proof because
of its symmetry with respect to the previous one.
\begin{lemma}\label{lem:orthogonal_forwardPB} \sl Consider $V \in L^1((0,T); L^\infty(\R^n))$ and 
$ G \in L^1((0,T); L^2(\R^n)) $ such that
\[ \int_\Sigma \overline{G} u \, = 0 \]
for $u \in C([0, T]; L^2(\R^n))$ solution of the initial-value problem 
\eqref{pb:IVP} with every $f \in L^2 (\R^n)$.
Then, the solution $v \in C([0, T]; L^2(\R^n))$
of the problem 
\begin{equation}
\label{pb:zero-final_non-homo}
\left\{
		\begin{aligned}
		& (i\partial_\tm + \Delta - \overline{V}) v = G & & \textnormal{in} \, \Sigma, \\
		& v(T, \centerdot) = 0 &  & \textnormal{in} \, \R^n.
		\end{aligned}
	\right.
\end{equation}
satisfies that
\[ v(0, \centerdot) = 0. \]
\end{lemma}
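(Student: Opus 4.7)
The plan is to mirror the argument of \cref{lem:orthogonal_rewardPB} with the roles of initial and final time exchanged. Fix an arbitrary $f\in L^2(\R^n)$ and let $u\in C([0,T];L^2(\R^n))$ denote the solution of the initial-value problem \eqref{pb:IVP} with potential $V$ and datum $f$. First I would invoke the integration-by-parts identity of \cref{lem:integration_by_parts} with this $u$ and with $v$ solving \eqref{pb:zero-final_non-homo}. The hypotheses required for that proposition are met: $u,v\in C([0,T];L^2(\R^n))$, while $(i\partial_\tm+\Delta)u = Vu$ and $(i\partial_\tm+\Delta)v = \overline{V}v+G$ both lie in $L^1((0,T);L^2(\R^n))$ thanks to $V\in L^1((0,T);L^\infty(\R^n))$ and $G\in L^1((0,T);L^2(\R^n))$. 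Combining the identity with the boundary conditions $u(0,\centerdot)=f$ and $v(T,\centerdot)=0$ yields
\begin{equation*}
\int_\Sigma \big[(i\partial_\tm+\Delta)u\,\overline{v}-u\,\overline{(i\partial_\tm+\Delta)v}\big] \,= -i\int_{\R^n} f\,\overline{v(0,\centerdot)}\,.
\end{equation*}

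Next I would rewrite the left-hand side using the two equations. Substituting $(i\partial_\tm+\Delta)u=Vu$ in the first term and $\overline{(i\partial_\tm+\Delta)v}=V\overline{v}+\overline{G}$ in the second, the $Vu\overline{v}$ contributions cancel and the integral collapses to $-\int_\Sigma \overline{G}\,u$. By the standing hypothesis this quantity vanishes, so
\begin{equation*}
\int_{\R^n} f\,\overline{v(0,\centerdot)} \,= 0\,.
\end{equation*}
Since $f\in L^2(\R^n)$ is arbitrary, this forces $v(0,\centerdot)=0$, which is the desired conclusion.

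The argument presents no real obstacle: every step is the direct temporal mirror image of the one in \cref{lem:orthogonal_rewardPB}, and the only nontrivial input, the integration-by-parts formula, has already been established in \cref{lem:integration_by_parts}. The one small point to verify when writing out the proof is that $(i\partial_\tm+\Delta)v$ indeed belongs to $L^1((0,T);L^2(\R^n))$, but this is immediate from the equation satisfied by $v$ and the standing integrability assumptions on $V$ and $G$.
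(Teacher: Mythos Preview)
Your proposal is correct and is precisely the symmetric argument the paper alludes to when it says the lemma is stated ``without proof because of its symmetry with respect to the previous one.'' Every step mirrors the proof of \cref{lem:orthogonal_rewardPB} exactly as intended, and your verification that $(i\partial_\tm+\Delta)v\in L^1((0,T);L^2(\R^n))$ is the only small point needed to invoke \cref{lem:integration_by_parts}.
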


For the next result it is convenient to recall that, for
$\nu \in \R^n \setminus \{ 0 \}$, we introduced the notation 
\[ \Sigma^\nu_j = (0, T) \times \Omega^\nu_j \]
right after the \eqref{cond:V_CGO} in introduction.

\begin{lemma}\label{lem:exp_decay} \sl Consider $F \in L^2(\Sigma)$ such that 
there exists $c > 0$ so that $e^{c|\x|} F \in L^2(\Sigma)$.
Let $u \in C([0,T]; L^2(\R^n))$ satisfy the conditions
\[
\left\{
		\begin{aligned}
		& (i\partial_\tm + \Delta) u = F & & \textnormal{in} \enspace \Sigma, \\
		& u(0, \centerdot) = u(T, \centerdot) = 0 &  & \textnormal{in} \enspace \R^n.
		\end{aligned}
	\right.
\]
Then, there exists an absolute constant $C > 0$ such that
\[ \| e^{\nu \cdot \x} u \|_{L^2(\Sigma^\nu_{< R})} \leq C |\nu|^{-1/2} T^{1/2} R^\theta \sum_{j \in \N_0 } 2^{j(1/2 - \theta)} \| e^{\nu \cdot \x} F \|_{L^2(\Sigma_j^\nu)}\]
for all $\nu \in \R^n \setminus \{ 0 \}$ with $|\nu| < c$ and 
all $\theta \in (0, 1/2)$. Here 
$\Sigma^\nu_{< R} = (0, T) \times \{ x \in \R^n : |x \cdot \nu| < |\nu| R \}$.
\end{lemma}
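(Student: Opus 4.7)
My plan is to argue by duality. For $\phi \in L^2(\Sigma^\nu_{<R})$ of norm at most one (extended by zero to $\R \times \R^n$), I would construct a test function $v = e^{\nu \cdot \x} V$ with $V = S_{(0, \nu)}[\phi]$. The conjugation identity
\[ e^{-\nu \cdot \x}(i\partial_\tm + \Delta)e^{\nu \cdot \x} = i\partial_\tm + \Delta + 2\nu \cdot \nabla + |\nu|^2, \]
combined with the fact that $S_{(0, \nu)}$ is the Fourier multiplier inverting the right-hand side on $\R \times \R^n$, gives $(i\partial_\tm + \Delta)v = e^{\nu \cdot \x} \phi$; in particular $(i\partial_\tm + \Delta)v$ is supported in $\Sigma^\nu_{<R}$ and lies in $L^2$.

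Using $u(0) = u(T) = 0$, I would apply a suitable variant of \cref{lem:_generalization_integration_by_parts} to obtain
\[
\int_{\Sigma^\nu_{<R}} e^{\nu \cdot \x} u \,\overline\phi \;=\; \int_\Sigma u \,\overline{(i\partial_\tm + \Delta) v} \;=\; \int_\Sigma F \,\overline v \;=\; \int_\Sigma e^{\nu \cdot \x} F \,\overline V.
\]
The main technical obstacle is that $v = e^{\nu \cdot \x} V$ is not in $L^2(\tilde\Sigma)$: although $V \in Y_\nu^{1/2-\theta}$ has only polynomial growth in the $\nu$-direction, the factor $e^{\nu \cdot \x}$ grows exponentially, so the proposition does not apply verbatim. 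I would address this by inserting a smooth cutoff $\chi_M$ in the $\nu$-direction, applying the proposition to $\chi_M v$, and letting $M \to \infty$. The hypothesis $e^{c|\x|} F \in L^2(\Sigma)$ with $|\nu| < c$ is precisely what controls the limiting procedure: it makes $e^{\nu \cdot \x} F$ exponentially decaying in the $\nu$-direction and thereby kills the pairing of $u \in L^2(\Sigma)$ with the commutator terms $[i\partial_\tm + \Delta, \chi_M] v$ in the limit.

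To bound $\|V\|_{L^2(\Sigma^\nu_j)}$, I would invoke the change-of-variables identities \eqref{id:relations} together with \cref{lem:boundedness_S} applied with the dual parameter $1/2 - \theta$ in place of $\theta$. The image of $(0, T) \times \Omega^\nu_j$ under this change of variables has time-size $\sim |\nu|^2 T$ and $\nu$-direction size $\sim |\nu| 2^j$, while the support of $\phi$ in $\Sigma^\nu_{<R}$ confines the nonvanishing input cells to indices $\beta_1 \lesssim \log_2 T$ and $\beta_2 \lesssim \log_2 R$; consolidating the resulting double sum by Cauchy--Schwarz yields
\[
\|V\|_{L^2(\Sigma^\nu_j)} \;\lesssim\; |\nu|^{-1/2} T^{1/2} R^\theta 2^{j(1/2-\theta)} \|\phi\|_{L^2(\Sigma^\nu_{<R})}.
\]
A final Cauchy--Schwarz over $j$,
\[
\Big| \int_\Sigma e^{\nu \cdot \x} F \,\overline V \Big| \;\leq\; \sum_{j \in \N_0} \|e^{\nu \cdot \x} F\|_{L^2(\Sigma^\nu_j)} \|V\|_{L^2(\Sigma^\nu_j)},
\]
followed by taking the supremum over $\|\phi\|_{L^2(\Sigma^\nu_{<R})} \leq 1$, yields the claimed estimate.
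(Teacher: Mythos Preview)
Your duality set-up is natural, and the estimate you claim for $V = S_{(0,\nu)}\phi$ in the strips $\Sigma^\nu_j$ does follow from \cref{lem:boundedness_S} with the exponent $1/2-\theta$. The gap is in the cutoff step. When you insert $\chi_M$ and expand, the identity you need is
\[
\int_\Sigma F\,\overline{\chi_M v} \;=\; \int_\Sigma u\,\overline{\chi_M\,e^{\nu\cdot\x}\phi} \;+\; \int_\Sigma u\,\overline{[\Delta,\chi_M]v}.
\]
The first two integrals converge as $M\to\infty$ for the reasons you state (exponential decay of $e^{\nu\cdot\x}F$ against polynomial growth of $V$; compact support of $\phi$ in the $\nu$-direction). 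But the commutator integral involves $e^{\nu\cdot\x}u$ paired with $\nu\cdot\nabla V$ or $V$ on the shell $|\nu\cdot x|\sim |\nu|M$, and the only a priori information on $u$ is $u\in L^2(\Sigma)$. On the positive half of that shell the weight $e^{\nu\cdot\x}$ is of order $e^{|\nu|M}$, so you would need a weighted bound on $u$ there to force this term to zero---which is exactly the conclusion of the lemma. The decay of $e^{\nu\cdot\x}F$ does not enter this particular pairing at all, so your justification is circular. (Compare: in \cref{prop:CGO-PhS_int-by-parts} the analogous commutator is killed precisely by invoking \cref{lem:exp_decay}.)

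The paper avoids this circularity by a direct, non-dual argument. It extends $F$ and $u$ by zero to $G$ and $v$ on $\R\times\R^n$, and shows the identity $T_\nu(e^{\nu\cdot\x}G)=e^{\nu\cdot\x}v$ for the multiplier $T_\nu=S_{(0,-\nu)}$; the desired bound then follows from the mapping estimate \eqref{in:boundedness_Tnu}. The identification is carried out on the Fourier side by a contour shift: the hypotheses $u(0,\cdot)=u(T,\cdot)=0$ force $\widehat G(-|\xi|^2,\xi)=0$, which via \cref{cor:vanish} makes $\zeta\mapsto\widehat G(\tau,\xi',\zeta)/(-\tau-|\xi'|^2-\zeta^2)$ holomorphic in the strip $|\Im\zeta|<c$; the Cauchy--Goursat theorem then moves the integration from $\Im\zeta=|\nu|$ to $\Im\zeta=0$, recovering $\widehat v$. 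This complex-analytic step is what replaces your cutoff limit and is where the vanishing boundary data and the exponential decay of $F$ are genuinely used.
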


In order to prove this lemma, we will need to state some well known facts 
(see for example Theorem 7.4.2 in \cite{zbMATH01950198}). For 
convenience for the readers, we include a proof of these facts.

\begin{lemma}\label{lem:Fourier_hol-extension}\sl Consider $F \in L^1(\R; L^2(\R^n))$ such that
there exists $c > 0$ so that $e^{c|\x|} F \in L^1(\R; L^2(\R^n))$. Then:
\begin{enumerate}[label=\textnormal{(\alph*)}, ref=\textnormal{\alph*}]
\item \label{ite:L1} For every $\nu \in \R^n$ such that 
$|\nu| < c$, we have $e^{\nu \cdot \x} F \in L^1(\R \times \R^n)$.
\item \label{ite:representation} The Fourier transform of $F$ can be extended to 
$\R \times \R^{n - 1} \times \C_{|\Im|<c}$  as the continuous function
\[ \widehat{F} : (\tau, \xi^\prime, \zeta) \mapsto \frac{1}{(2 \pi)^\frac{n+1}{2}} \int_{\R \times \R^n } e^{-i(\tau t + (\xi^\prime, \zeta) \cdot x)} F(t, x) \, \dd (t, x), \]
where $\C_{|\Im|<c} = \{ \zeta \in \C : |\Im \zeta| < c \}$. Furthermore,
it satisfies the relation 
\[ \widehat{e^{\lambda \x_n} F}(\tau, \xi) = \widehat{F}(\tau, \xi^\prime, \xi_n + i \lambda) \]
for all $(\tau, \xi) \in \R \times \R^n$ and $\lambda \in \R$ such that 
$|\lambda|<c$. Here $\x_n = e_n \cdot \x$ with $e_n$ the $n^{\rm th}$ element of 
the standard basis of $\R^n$, and $\xi = (\xi^\prime, \xi_n)$.
\item \label{ite:holomrphic_z} For every 
$(\tau, \xi^\prime) \in \R \times \R^{n-1}$, the function 
\[ \zeta \in \C_{|\Im|<c} \mapsto \widehat{F}(\tau, \xi^\prime, \zeta) \]
is holomorphic in $\C_{|\Im|<c} $.
\end{enumerate}
\end{lemma}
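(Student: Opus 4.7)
The plan is to prove the three parts in turn, with the main work being a Cauchy--Schwarz estimate in $x$ to gain the $L^1$ integrability of $e^{\nu\cdot\x}F$.

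For part (\ref{ite:L1}), I would bound $e^{\nu\cdot x}\leq e^{|\nu||x|}$ and rewrite $e^{|\nu||x|}=e^{(|\nu|-c)|x|}\,e^{c|x|}$, where $|\nu|-c<0$. Applying the Cauchy--Schwarz inequality in the $x$ variable at each fixed $t$ and then integrating in $t$, I would obtain
\[
\int_{\R\times\R^n}e^{\nu\cdot x}|F(t,x)|\,\dd(t,x)\leq \|e^{(|\nu|-c)|\x|}\|_{L^2(\R^n)}\,\|e^{c|\x|}F\|_{L^1(\R;L^2(\R^n))}.
\]
The first factor is finite because $|\nu|-c<0$ makes it a convergent exponential integral, and the second factor is finite by assumption; this gives $e^{\nu\cdot\x}F\in L^1(\R\times\R^n)$.

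For part (\ref{ite:representation}), once (\ref{ite:L1}) is established, for any $(\tau,\xi^\prime,\zeta)\in\R\times\R^{n-1}\times\C_{|\Im|<c}$ the integrand
\[
e^{-i(\tau t+\xi^\prime\cdot x^\prime+\zeta x_n)}F(t,x)=e^{-i(\tau t+\xi^\prime\cdot x^\prime+(\Re\zeta)x_n)}\,e^{(\Im\zeta)x_n}F(t,x)
\]
is dominated (in modulus) by $e^{\nu\cdot x}|F(t,x)|$ with $\nu=(\Im\zeta)e_n$ and $|\nu|<c$, so the integral defining $\widehat{F}(\tau,\xi^\prime,\zeta)$ converges absolutely. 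The same domination, uniform on any compact subset of $\R\times\R^{n-1}\times\C_{|\Im|<c}$ (after slightly enlarging $|\Im\zeta|$ while keeping it below $c$), lets me invoke the dominated convergence theorem to get continuity. The identity $\widehat{e^{\lambda\x_n}F}(\tau,\xi)=\widehat{F}(\tau,\xi^\prime,\xi_n+i\lambda)$ is then just a relabeling of the complex exponential, valid because both sides are given by absolutely convergent integrals with the same integrand.

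For part (\ref{ite:holomrphic_z}), I would apply Morera's theorem: fix $(\tau,\xi^\prime)$ and any closed triangle $\Gamma\subset\C_{|\Im|<c}$. The function $\zeta\mapsto e^{-i\zeta x_n}$ is entire, so $\oint_{\Gamma}e^{-i\zeta x_n}\,\dd\zeta=0$ for every $x_n\in\R$. Since the dominating function from part (\ref{ite:representation}) is integrable on $\R\times\R^n$ uniformly for $\zeta$ in a small neighborhood of $\Gamma$, Fubini's theorem applies and I may exchange the contour integral with the integral defining $\widehat{F}$ to get $\oint_{\Gamma}\widehat{F}(\tau,\xi^\prime,\zeta)\,\dd\zeta=0$. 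Combined with the continuity established in (\ref{ite:representation}), Morera's theorem yields holomorphy.

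The main obstacle here is merely the bookkeeping that ensures uniform domination in a neighborhood of each point of $\C_{|\Im|<c}$ so that dominated convergence and Fubini legitimately apply; once part (\ref{ite:L1}) is established with room to spare in the exponent, parts (\ref{ite:representation}) and (\ref{ite:holomrphic_z}) follow by standard arguments.
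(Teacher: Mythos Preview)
Your proof is correct and follows essentially the same route as the paper: the Cauchy--Schwarz bound in (\ref{ite:L1}) is identical to the paper's, and (\ref{ite:representation}) is handled the same way. The only minor difference is in (\ref{ite:holomrphic_z}), where you invoke Morera's theorem via Fubini while the paper differentiates under the integral sign to verify the Cauchy--Riemann equations (using $\x_n e^{\Im\zeta\, e_n\cdot\x}F\in L^1$); both are standard and equivalent in this context.
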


\begin{proof}
Start by proving \eqref{ite:L1}. Note that
\[\nu \cdot x \leq - ( c - |\nu|) |x| + c |x|\]
for all $x \in \R^n$. Hence
\[ \|e^{\nu \cdot \x} F \|_{L^1 (\R \times \R^n)} \leq \| e^{- ( c - |\nu|) |x|} \|_ {L^2(\R^n)} \| e^{c |\x|} F \|_{L^1(\R; L^2(\R^n))}, \]
where $e^{- ( c - |\nu|) |x|} \in L^2(\R^n) $ since $|\nu| < c$.

The property \eqref{ite:representation} 
is a direct consequence of \eqref{ite:L1} since
for $(\tau, \xi^\prime, \zeta) \in \R \times \R^{n - 1} \times \C_{|\Im|<c}$,
we have that
\begin{align*}
\widehat{e^{\Im \zeta e_n \cdot \x} F} (\tau, \xi^\prime, \Re \zeta) & = \frac{1}{(2 \pi)^\frac{n+1}{2}} \int_{\R \times \R^n } e^{-i(\tau t + (\xi^\prime, \Re \zeta) \cdot x)} e^{\Im \zeta e_n \cdot x} F(t, x) \, \dd (t, x) \\
& = \frac{1}{(2 \pi)^\frac{n+1}{2}} \int_{\R \times \R^n } e^{-i(\tau t + (\xi^\prime, \zeta) \cdot x)} F(t, x) \, \dd (t, x).
\end{align*}
Hence, we can use the last expression to extend $\widehat{F}$ to
$\R \times \R^{n - 1} \times \C_{|\Im|<c}$. Furthermore, choosing 
$\zeta = \xi_n + i \lambda$ we have that
\[ \widehat{e^{\lambda \x_n} F} (\tau, \xi^\prime, \xi_n) = \widehat{F} (\tau, \xi^\prime, \xi_n + i \lambda).  \]
This proves the property \eqref{ite:representation}.

Finally, to verify the property \eqref{ite:holomrphic_z} one 
needs to check that the function satisfies the Cauchy--Riemann equations. This 
can be justified since 
$\x_n e^{\Im \zeta e_n \cdot \x} F \in L^1(\R \times \R^n)$. 
This completes the proof of \eqref{ite:holomrphic_z}.
\end{proof}

Before proving the \cref{lem:exp_decay}, we still need a preparatory results, 
which is a consequence of the \cref{lem:Fourier_hol-extension}.

\begin{corollary}\label{cor:vanish}\sl 
Consider $F \in L^1((0,T); L^2(\R^n))$ such that
there exists $c > 0$ so that $e^{c|\x|} F \in L^1((0,T); L^2(\R^n))$.
Let $G \in L^1(\R; L^2(\R^n))$ denote the function
\begin{equation*}
G (t, \centerdot) =
	\left\{
		\begin{aligned}
		& F(t, \centerdot) & & \textnormal{if} \, t \in (0,T), \\
		& 0 &  & \textnormal{if} \, t \in \R \setminus (0,T).
		\end{aligned}
	\right.
\end{equation*}
If there exists $u \in C([0,T]; L^2(\R^n))$ satisfying
\[
\left\{
		\begin{aligned}
		& (i\partial_\tm + \Delta) u = F & & \textnormal{in} \enspace \Sigma, \\
		& u(0, \centerdot) = u(T, \centerdot) = 0 &  & \textnormal{in} \enspace \R^n;
		\end{aligned}
	\right.
\]
then, for every $(\tau, \xi^\prime) \in \R \times \R^{n-1}$ such that 
$\tau \neq -|\xi^\prime|^2$, the function 
\begin{equation}
\label{map:Gzparoboloid}
\zeta \in \C_{|\Im|<c} \mapsto \frac{\widehat{G}(\tau, \xi^\prime, \zeta)}{-\tau - |\xi^\prime|^2 - \zeta^2}
\end{equation}
is holomorphic in $\C_{|\Im|<c} = \{ \zeta \in \C : |\Im \zeta| < c \}$.
\end{corollary}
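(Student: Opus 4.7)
The plan is to show that the two simple zeros $\zeta_\pm=\pm\sqrt{-\tau-|\xi'|^2}$ of the denominator (distinct because $\tau\neq-|\xi'|^2$) never produce genuine poles of the quotient: it suffices to verify that $\widehat G(\tau,\xi',\zeta_\pm)=0$ whenever $\zeta_\pm\in\C_{|\Im|<c}$.

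First I would upgrade the regularity of $\widehat G$ jointly in both variables $(\tau,\zeta)$. Since $G$ is supported in $(0,T)\times\R^n$, the integral representation from \cref{lem:Fourier_hol-extension}(\ref{ite:representation}) is bounded by $e^{|\Im\tau|T}$ times $e^{|\Im\zeta||x_n|}|G|$, which is dominated on compact subsets of $\C\times\C_{|\Im|<c}$ by an integrable function thanks to the hypothesis $e^{c|\x|}G\in L^1(\R;L^2(\R^n))$. A standard Morera/dominated-convergence argument then shows that, for each fixed $\xi'\in\R^{n-1}$, the function $(\tau,\zeta)\mapsto\widehat G(\tau,\xi',\zeta)$ is jointly holomorphic on $\C\times\C_{|\Im|<c}$.

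Next I would show that $\widehat G$ vanishes on the real paraboloid $\{\tau+|\xi|^2=0\}$. The hypotheses $u\in C([0,T];L^2(\R^n))$ and $u(0,\centerdot)=u(T,\centerdot)=0$ imply that the zero extension $\tilde u$ of $u$ belongs to $L^2(\R\times\R^n)$ and satisfies the distributional equation $(i\partial_\tm+\Delta)\tilde u=G$ on all of $\R\times\R^n$ (the vanishing at the time endpoints kills the boundary contribution). Taking Fourier transforms yields $-(\tau+|\xi|^2)\widehat{\tilde u}(\tau,\xi)=\widehat G(\tau,\xi)$ as tempered distributions, so off the paraboloid we have $\widehat{\tilde u}(\tau,\xi)=-\widehat G(\tau,\xi)/(\tau+|\xi|^2)$. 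Since $\widehat{\tilde u}\in L^2(\R\times\R^n)$, Fubini and the change of variables $s=\tau+|\xi|^2$ give that for almost every $\xi\in\R^n$,
\[ \int_\R \frac{|\widehat G(s-|\xi|^2,\xi)|^2}{s^2}\,\dd s < \infty. \]
The continuity of $\widehat G$ established in \cref{lem:Fourier_hol-extension}(\ref{ite:representation}) then forces $\widehat G(-|\xi|^2,\xi)=0$ for almost every $\xi$, and by continuity for every $\xi\in\R^n$.

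Finally I would conclude by analytic continuation. The function $\Phi(\zeta):=\widehat G(-|\xi'|^2-\zeta^2,\xi',\zeta)$ is holomorphic on the connected open strip $\C_{|\Im|<c}$ by the first step, and it vanishes on the real line by the second step (since $(-|\xi'|^2-\xi_n^2,\xi',\xi_n)$ lies on the paraboloid for every $\xi_n\in\R$). The identity principle then yields $\Phi\equiv 0$ on $\C_{|\Im|<c}$; specialising to $\zeta=\zeta_\pm$, which satisfies $-|\xi'|^2-\zeta_\pm^2=\tau$, gives $\widehat G(\tau,\xi',\zeta_\pm)=\Phi(\zeta_\pm)=0$ and the quotient therefore extends holomorphically across the two potential poles. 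The main technical obstacle is the second step: transferring the distributional PDE into a pointwise vanishing statement for $\widehat G$ on the paraboloid, where the boundary conditions $u(0,\centerdot)=u(T,\centerdot)=0$ are essential to obtain the global equation on $\R\times\R^n$.
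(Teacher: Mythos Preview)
Your argument is correct and follows the same three-step architecture as the paper: extend $\widehat G$ holomorphically in $(\tau,\zeta)$, show $\widehat G(-|\xi|^2,\xi)=0$ on the real paraboloid, and propagate this by the identity principle via $\Phi(\zeta)=\widehat G(-|\xi'|^2-\zeta^2,\xi',\zeta)$ to kill the two simple poles. The only genuine difference is in the second step. The paper obtains $\widehat G(-|\xi|^2,\xi)=0$ by pairing the integral representation of $\widehat G(-|\xi|^2,\xi)$ against an arbitrary $\widehat\phi$, recognising the result as $\int_\Sigma (i\partial_\tm+\Delta)u\,\overline{e^{it\Delta}\phi}$, and invoking the integration-by-parts formula of \cref{lem:integration_by_parts} together with $u(0,\centerdot)=u(T,\centerdot)=0$. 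You instead pass through the global distributional equation $(i\partial_\tm+\Delta)\tilde u=G$ for the zero extension, and use that $\widehat{\tilde u}\in L^2$ forces the continuous function $\widehat G$ to vanish on the paraboloid via the non-integrability of $s^{-2}$ near $s=0$. Both routes rest on the same boundary conditions; yours avoids calling \cref{lem:integration_by_parts} at the cost of the (easy but slightly indirect) a.e.-to-everywhere upgrade, while the paper's route is more explicitly PDE-theoretic and reuses a lemma already proved.
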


\begin{proof} From the item \eqref{ite:holomrphic_z} in the 
\cref{lem:Fourier_hol-extension} we know that, for every 
$(\tau, \xi^\prime) \in \R \times \R^{n-1}$, the function 
\begin{equation}
\label{map:Gz}
\zeta \in \C_{|\Im|<c} \mapsto \widehat{G}(\tau, \xi^\prime, \zeta)
\end{equation}
is holomorphic in $\C_{|\Im|<c} $. While, whenever $\tau \neq - |\xi^\prime|^2$, 
the function 
\[ \zeta \in \C_{|\Im|<c} \mapsto \frac{1}{-\tau - |\xi^\prime|^2 - \zeta^2} \]
is meromorphic, with simple poles at $\zeta_1 = -|\tau + |\xi^\prime|^2 |^{1/2} $ 
and $ \zeta_2 = |\tau + |\xi^\prime|^2 |^{1/2} $ if $- \tau - |\xi^\prime|^2 > 0$, and $\zeta_3 = -i|\tau + |\xi^\prime|^2 |^{1/2} $ and 
$ \zeta_4 = i|\tau + |\xi^\prime|^2 |^{1/2} $ if $- \tau - |\xi^\prime|^2 < 0$
and $|\tau + |\xi^\prime|^2 | < c^2$. 
Then, in order to prove that the function 
\eqref{map:Gzparoboloid} is holomorphic is enough to check that \eqref{map:Gz} vanishes at these poles. Let us focus on proving that.

The item \eqref{ite:L1} in the \cref{lem:Fourier_hol-extension} and the fact that
$\supp G \subset \overline{\Sigma}$ ensure that, for every 
$(\sigma, \nu) \in \R \times \R^n$ such that  $|\nu| < c$, 
we have $e^{\sigma \tm + \nu \cdot \x} G \in L^1(\R \times \R^n)$.
Thus, the Fourier transform of $G$ can be extended to 
$\C \times \R^{n - 1} \times \C_{|\Im|<c}$  as the continuous function
\begin{equation}
\label{map:Ghatext}
\widehat{G} : (\gamma, \xi^\prime, \zeta) \mapsto \frac{1}{(2 \pi)^\frac{n+1}{2}} \int_{\R \times \R^n } e^{-i(\gamma t + (\xi^\prime, \zeta) \cdot x)} G(t, x) \, \dd (t, x).
\end{equation}
One can check that, for every $\xi^\prime \in \R^{n-1}$, the function 
\begin{equation}
\label{map:Gzz}
\zeta \in \C_{|\Im|<c} \mapsto \widehat{G}(-|\xi^\prime|^2 - \zeta^2, \xi^\prime, \zeta)
\end{equation}
is holomorphic in $\C_{|\Im|<c} $. We will see later that
$\widehat{G}(-|\xi|^2, \xi) = 0$ for all $\xi \in \R^n$, that is,
for every $\xi^\prime \in \R^{n-1}$ the function
\eqref{map:Gzz} vanishes on $\{ \zeta \in \C_{|\Im|<c} : \Im \zeta = 0 \}$.
Then, by analytic 
continuation we have that
\[ \widehat{G} (-|\xi^\prime|^2 - \zeta^2, \xi^\prime, \zeta) = 0  \qquad \forall (\xi^\prime, \zeta) \in \R^{n - 1} \times \C_{|\Im| < c},\]
which implies that \eqref{map:Gz} vanishes at the poles 
$\{ \zeta_1, \zeta_2, \zeta_3, \zeta_4\}$.
Indeed, if $- \tau - |\xi^\prime|^2 > 0$ we have that
\[ \widehat{G} (\tau, \xi^\prime, \zeta_j) =  \widehat{G} (-|\xi^\prime|^2 - \zeta_j^2, \xi^\prime, \zeta_j) = 0 \qquad j \in \{ 1, 2 \}; \]
however, $- \tau - |\xi^\prime|^2 < 0$ and $|\tau + |\xi^\prime|^2 | < c^2$
we have that
\[ \widehat{G} (\tau, \xi^\prime, \zeta_j) =  \widehat{G} (-|\xi^\prime|^2 - \zeta_j^2, \xi^\prime, \zeta_j) = 0 \qquad j \in \{ 3, 4 \}. \]
Therefore, the function \eqref{map:Gz} vanishes at the poles.

To conclude the proof of this corollary, we need to show that
$\widehat{G}(-|\xi|^2, \xi) = 0$ for all $\xi \in \R^n$. By the expression
\eqref{map:Ghatext}, we have that
\[ \widehat{G}(-|\xi|^2, \xi) = \frac{1}{(2 \pi)^\frac{n+1}{2}} \int_\Sigma e^{-i(-|\xi|^2 t + \xi \cdot x)} F(t, x) \, \dd (t, x). \]
Since every arbitrary function in $ \mathcal{S}(\R^n)$ can be written as 
$\widehat{\phi}$ with $\phi \in \mathcal{S}(\R^n)$, we have
\[ \int_{\R^n} \widehat{G}(-|\xi|^2, \xi) \overline{\widehat{\phi(\xi)}} \, \dd \xi = \frac{1}{(2 \pi)^\frac{1}{2}} \int_\Sigma (i\partial_\tm + \Delta) u \, \overline{[e^{i\tm \Delta} \phi]} \]
using the fact that $(i\partial_\tm + \Delta) u = F$ in $\Sigma$. The 
\cref{lem:integration_by_parts} and the fact that 
$u(0, \centerdot) = u(T, \centerdot) = 0$ imply that the right-hand side of
the previous identity vanishes. Hence $\widehat{G}(-|\xi|^2, \xi) = 0$ for all 
$\xi \in \R^n$, which concludes this proof.
\end{proof}

\begin{proof}[Proof of the \cref{lem:exp_decay}]
Start by checking that the right-hand side of the 
inequality in the statement of this lemma is finite for all $\theta \in (0,1/2)$. 
Indeed, the chain of inequalities
\begin{equation}
\label{in:exponents}
\nu \cdot x \leq - \Big( \frac{c}{|\nu|} - 1 \Big) |\nu \cdot x| +  \frac{c}{|\nu|} |\nu \cdot x| \leq - \Big( \frac{c}{|\nu|} - 1 \Big) |\nu \cdot x| +  c |x|,
\end{equation}
valid for all $x \in \R^n$, implies that
\begin{equation}
\label{in:exp_decay}
\| e^{\nu \cdot \x} F \|_{L^2(\Sigma_j^\nu)} \leq \| e^{-( \frac{c}{|\nu|} - 1 )|\nu \cdot \x|} \|_{L^\infty(\Sigma_j^\nu)} \| e^{c|\x|} F \|_{L^2(\Sigma)}.
\end{equation}
Consequently,
\[ \sum_{j \in \N_0 } 2^{j(1/2 - \theta)} \| e^{\nu \cdot \x} F \|_{L^2(\Sigma_j^\nu)} \leq \Big( 1 +  \sum_{j \in \N } 2^{j(1/2 - \theta)} e^{-( \frac{c}{|\nu|} - 1 )2^{j - 1}} \Big) \| e^{c|\x|} F \|_{L^2(\Sigma)}, \]
where the series between parenthesis is convergent. This shows that the right-hand 
side of the inequality stated in the \cref{lem:exp_decay} is finite for all 
$\theta \in (0,1/2)$.

For $\nu \in \R^n \setminus \{ 0 \}$ and $g \in \mathcal{S}(\R \times \R^n)$,
recall the definition of the multiplier $S_{(0,-\nu)}$, that will be denoted here
by $T_\nu $:
\[T_\nu g (t, x) = \frac{1}{(2 \pi)^\frac{n+1}{2}} \int_{\R \times \R^n} e^{i(t \tau + x \cdot \xi)} \frac{\widehat{g} (\tau, \xi)}{-\tau - (\xi + i \nu)\cdot(\xi + i \nu)} \, \dd (\tau, \xi) \]
for all $(t, x) \in \R \times \R^n$. Changing variables according to 
$ 4 |\nu|^2 \sigma = - \tau + |\nu|^2$, $2 |\nu| Q \eta = \xi$ with $Q \in \Orth(n)$ such that $- \nu = |\nu| Q e_n$, and 
$(2 |\nu|)^{n + 2} \dd (\sigma, \eta) = \dd (\tau, \xi)$:
\begin{align*}
& T_\nu g (t, x) = \frac{e^{i|\nu|^2 t}}{4|\nu|^2} S f_\nu (- 4 |\nu|^2 t, 2 |\nu| Q^\T x),\\
& f_\nu (s, y) = e^{is/4} g\Big(-\frac{s}{4|\nu|^2}, \frac{Q y}{2|\nu|} \Big);
\end{align*}
where the multiplier $S$ is defined in \eqref{def:S}.
Using the previous relations and the \cref{lem:boundedness_S} 
with $I = (- 4 |\nu|^2 T, 0)$, $J = (-2|\nu|R, 2|\nu|R)$, 
$I_j = - 4 |\nu|^2 \Upsilon_j$ and $J_k = 2 |\nu| \Upsilon_k$,
one can check that
there exists an absolute constant $C > 0$ such that
\begin{equation}
\label{in:boundedness_Tnu}
\| T_\nu g \|_{L^2(\Sigma^\nu_{< R})} \leq C |\nu|^{-1/2} T^\theta R^\theta \sum_{\alpha \in \N_0^2} 2^{|\alpha|(1/2 - \theta)} \| g \|_{L^2(\Pi^\nu_\alpha)}
\end{equation}
for all $\nu \in \R^n \setminus \{ 0 \}$, $g \in \mathcal{S}(\R \times \R^n)$ 
and $\theta \in (0, 1/2)$.
Recall that the sequence of sets 
$\{ \Pi_\alpha^\nu : \alpha \in \N_0^2 \}$ was described in the 
\cref{def:Xnu}.

Let $G \in L^2(\R \times \R^n)$ denote the function
\begin{equation*}
G (t, \centerdot) =
	\left\{
		\begin{aligned}
		& F(t, \centerdot) & & \textnormal{if} \, t \in (0,T), \\
		& 0 &  & \textnormal{if} \, t \in \R \setminus (0,T).
		\end{aligned}
	\right.
\end{equation*}
As a consequence of the inequality \eqref{in:boundedness_Tnu}, $T_\nu$ can be 
extended to define $T_\nu (e^{\nu \cdot \x} G)$ for every 
$\nu \in \R^n \setminus \{ 0 \}$ such that $|\nu| < c$ since
\[\sum_{\alpha \in \N_0^2} 2^{|\alpha|(1/2 - \theta)} \|e^{\nu \cdot \x} G \|_{L^2(\Pi_\alpha^\nu)} \leq C T^{1/2 - \theta} \sum_{j \in \N_0} 2^{j(1/2 - \theta)} \|e^{\nu \cdot \x} F \|_{L^2(\Sigma_j^\nu)}\]
for all $\nu \in \R^n \setminus \{ 0 \}$ such that $|\nu| < c$
and all $\theta \in (0, 1/2)$. Furthermore,
\begin{equation}
\label{in:TnuExpG}
\| T_\nu (e^{\nu \cdot \x} G) \|_{L^2(\Sigma^\nu_{< R})} \leq C |\nu|^{-1/2} T^{1/2} R^\theta \sum_{j \in \N_0} 2^{j(1/2 - \theta)} \|e^{\nu \cdot \x} F \|_{L^2(\Sigma_j^\nu)}
\end{equation}
for all $\nu \in \R^n \setminus \{ 0 \}$ such that $|\nu| < c$ and all $\theta \in (0, 1/2)$.

For $\nu \in \R^n \setminus \{ 0 \}$ define
\[ \Gamma_\nu = \{ (\tau, \xi) \in \R \times \R^n : \tau = - (\xi + i \nu)\cdot(\xi + i \nu) \}. \]
Since the codimension of $\Gamma_\nu$ is $2$, we have by
the item \eqref{ite:L1} of \cref{lem:Fourier_hol-extension}
that ---for every $\phi \in \mathcal{D}(\R \times \R^n)$ and every $\nu \in \R^n \setminus \{ 0 \}$ such that $|\nu| < c$--- the function
\[ (\tau, \xi) \in (\R \times \R^n) \setminus \Gamma_\nu \mapsto \frac{\widehat{e^{\nu \cdot \x} G} (\tau, \xi)}{- \tau - (\xi + i \nu)\cdot(\xi + i \nu)} \, \widecheck{e^{-\nu \cdot \x} \phi}(\tau, \xi) \]
can be extended to $\R \times \R^n$ so that it represents an element in 
$L^1(\R \times \R^n)$. Here 
\[ \widecheck{e^{-\nu \cdot \x} \phi}(\tau, \xi) = \widehat{e^{-\nu \cdot \x} \phi}(-\tau, -\xi).  \]
Hence, we can write
\begin{align*}
\langle e^{- \nu \cdot \x}  T_\nu (e^{\nu \cdot \x} G), \phi \rangle & = \langle  T_\nu (e^{\nu \cdot \x} G), e^{-\nu \cdot \x} \phi\rangle \\
& = \int_{\R \times \R^n} \frac{\widehat{e^{\nu \cdot \x} G} (\tau, \xi)}{- \tau - (\xi + i \nu)\cdot(\xi + i \nu)} \, \widecheck{e^{-\nu \cdot \x} \phi}(\tau, \xi) \, \dd (\tau, \xi)
\end{align*}
for all $\phi \in \mathcal{D}(\R \times \R^n)$ and 
$\nu \in \R^n \setminus \{ 0 \}$ such that $|\nu| < c$. 
For every $\nu \in \R^n \setminus \{ 0 \}$, 
there exists a $Q \in \Orth(n)$ so 
that $\nu = |\nu| Q e_n$, with $e_n$ the $n^{\rm th}$ element of the standard 
basis of $\R^n$. Let $G_Q$ and $\phi_Q$ denote the functions 
$G_Q (t, x) = G (t, Qx)$ and $\phi_Q (t, x) = \phi (t, Qx)$ for all 
$(t, x) \in \R \times \R^n$. Changing variables, we have that
\[ \langle e^{- \nu \cdot \x}  T_\nu (e^{\nu \cdot \x} G), \phi \rangle = \int_{\R \times \R^n} \frac{\widehat{e^{|\nu| \x_n} G_Q} (\tau, \xi)}{- \tau - (\xi + i |\nu| e_n)\cdot(\xi + i |\nu| e_n)} \, \widecheck{e^{-|\nu| \x_n} \phi_Q}(\tau, \xi) \, \dd (\tau, \xi) \]
recall that $\x_n = e_n \cdot \x$. 
By the item \eqref{ite:representation} of the \cref{lem:Fourier_hol-extension},
we have that
the right-hand side of the previous identity is equal to
\[ \int_{\R \times \R^{n-1} \times \R} \frac{\widehat{G_Q} (\tau, \xi^\prime, \xi_n + i |\nu|)}{- \tau - |\xi^\prime|^2 - (\xi_n + i |\nu|)^2} \, \widecheck{\phi_Q}(\tau, \xi^\prime, \xi_n + i |\nu|) \, \dd (\tau, \xi^\prime, \xi_n). \]
By the Fubini--Tonelli theorem the function
\[ \xi_n \in \R \mapsto \frac{\widehat{G_Q} (\tau, \xi^\prime, \xi_n + i |\nu|)}{- \tau - |\xi^\prime|^2 - (\xi_n + i |\nu|)^2} \, \widecheck{\phi_Q}(\tau, \xi^\prime, \xi_n + i |\nu|) \]
belongs to $L^1(\R)$ for a.e. $(\tau, \xi^\prime) \in \R \times \R^{n-1}$,
the function
\[ H_Q^{|\nu|} : (\tau, \xi^\prime) \in \R \times \R^{n-1} \mapsto \int_\R \frac{\widehat{G_Q} (\tau, \xi^\prime, \xi_n + i |\nu|)}{- \tau - |\xi^\prime|^2 - (\xi_n + i |\nu|)^2} \, \widecheck{\phi_Q}(\tau, \xi^\prime, \xi_n + i |\nu|) \, \dd \xi_n \]
belongs to $L^1(\R \times \R^{n - 1})$, and
\[ \langle e^{- \nu \cdot \x}  T_\nu (e^{\nu \cdot \x} G), \phi \rangle = \int_{\R \times \R^{n - 1}} H_Q^{|\nu|}(\tau, \xi^\prime) \, \dd (\tau, \xi^\prime).  \]

Note that, by the \cref{cor:vanish} and the \cref{lem:Fourier_hol-extension}
the function
\begin{equation}
\label{map:GQxin}
\xi_n \in \R \mapsto \frac{\widehat{G_Q} (\tau, \xi^\prime, \xi_n)}{- \tau - |\xi^\prime|^2 - \xi_n^2} \, \widecheck{\phi_Q}(\tau, \xi^\prime, \xi_n)
\end{equation}
belongs to $L^1(\R)$ for all $(\tau, \xi^\prime) \in \R \times \R^{n-1}$ such that
$\tau \neq - |\xi^\prime|^2$. 

Our final goal will be to show that the function
\[H_Q : (\tau, \xi^\prime) \in \R \times \R^{n-1} \mapsto \int_\R \frac{\widehat{G_Q} (\tau, \xi^\prime, \xi_n)}{- \tau - |\xi^\prime|^2 - \xi_n^2} \, \widecheck{\phi_Q}(\tau, \xi^\prime, \xi_n) \, \dd \xi_n\]
satisfies that
\begin{equation}
\label{id:HQ-HQnu}
H_Q (\tau, \xi^\prime) = H_Q^{|\nu|} (\tau, \xi^\prime) \qquad \forall (\tau, \xi^\prime) \in \R \times \R^{n-1} : \tau \neq - |\xi^\prime|^2.
\end{equation}
With this at hand, we have that
\begin{equation}
\label{id:TnuHQ}
\langle e^{- \nu \cdot \x}  T_\nu (e^{\nu \cdot \x} G), \phi \rangle = \int_{\R \times \R^{n - 1}} H_Q (\tau, \xi^\prime) \, \dd (\tau, \xi^\prime). 
\end{equation}
As we will see at a later stage, the function
\begin{equation}
\label{map:GQ}
(\tau, \xi) \in \R \times \R^n \mapsto \frac{\widehat{G_Q} (\tau, \xi)}{- \tau - |\xi|^2} \, \widecheck{\phi_Q}(\tau, \xi)
\end{equation}
belongs to $L^1(\R \times \R^n)$. Hence, by the Fubini--Tonelli theorem we have 
that
\[ \int_{\R \times \R^{n - 1}} H_Q (\tau, \xi^\prime) \, \dd (\tau, \xi^\prime) = \int_{\R \times \R^n} \frac{\widehat{G_Q} (\tau, \xi)}{- \tau - |\xi|^2} \, \widecheck{\phi_Q}(\tau, \xi) \, \dd (\tau, \xi). \]
Using the identity \eqref{id:TnuHQ} and the change of variables that removes the 
orthogonal matrix $Q$, we have
\begin{equation}
\label{id:TnuG}
\langle e^{- \nu \cdot \x}  T_\nu (e^{\nu \cdot \x} G), \phi \rangle = \int_{\R \times \R^n} \frac{\widehat{G} (\tau, \xi)}{- \tau - |\xi|^2} \, \widecheck{\phi}(\tau, \xi) \, \dd (\tau, \xi)
\end{equation}
for all $\phi \in \mathcal{D}(\R \times \R^n)$ and 
$\nu \in \R^n \setminus \{ 0 \}$ such that $|\nu| < c$.
Let $v \in C(\R; L^2(\R^n))$ denote the function
\begin{equation*}
v (t, \centerdot) =
	\left\{
		\begin{aligned}
		& u (t, \centerdot) & & \textnormal{if} \, t \in [0,T], \\
		& 0 &  & \textnormal{if} \, t \in \R \setminus [0,T].
		\end{aligned}
	\right.
\end{equation*}
The continuity follows from the fact that 
$u (0, \centerdot) = u (T, \centerdot) = 0$. Using this,
one can check that
\[ (i\partial_\tm + \Delta) v = G \textnormal{ in } \R \times \R^n. \]
Taking Fourier transform we have that\footnote{The symbol $\dot{\forall}$ means 'for almost every'.}
\begin{equation}
\label{eq:RtimeRn}
(- \tau - |\xi|^2) \widehat{v}(\tau, \xi) = \widehat{G} (\tau, \xi) \enspace \dot{\forall} (\tau, \xi) \in \R \times \R^n.
\end{equation}
This identity and \eqref{id:TnuG} yield
\[\langle e^{- \nu \cdot \x}  T_\nu (e^{\nu \cdot \x} G), \phi \rangle = \langle v, \phi \rangle \]
for all $\phi \in \mathcal{D}(\R \times \R^n)$, or equivalently
\[\langle T_\nu (e^{\nu \cdot \x} G), \psi \rangle = \langle e^{\nu \cdot \x}  v, \psi \rangle \]
for all $\psi \in \mathcal{D}(\R \times \R^n)$ ---it is enough to consider 
$\phi = e^{\nu \cdot \x} \psi$ for an arbitrary $\psi$. In particular,
\[ T_\nu (e^{\nu \cdot \x} G)|_\Sigma =  e^{\nu \cdot \x}  u.\]
Then, the inequality \eqref{in:TnuExpG} is the one stated in the 
\cref{lem:exp_decay}.

In order to conclude this proof, it remains to check two points:
the function \eqref{map:GQ} belongs to $L^1(\R \times \R^n)$, and the identity
\eqref{id:HQ-HQnu} holds. Start by showing the first of these points.
It is clear that \eqref{map:GQ} belongs to $L^1(\R \times \R^n)$ if, and only if,
the function
\begin{equation}
\label{map:G}
(\tau, \xi) \in \R \times \R^n \mapsto \frac{\widehat{G} (\tau, \xi)}{- \tau - |\xi|^2} \, \widecheck{\phi}(\tau, \xi)
\end{equation}
also belongs to $L^1(\R \times \R^n)$. In order to check that \eqref{map:G}
belongs to $L^1(\R \times \R^n)$, 
we derive from the identity \eqref{eq:RtimeRn} that
\[ \widehat{v}(\tau, \xi) = \frac{\widehat{G} (\tau, \xi)}{- \tau - |\xi|^2} \]
for a.e. $ (\tau, \xi) \in \R \times \R^n$ such that $\tau \neq - |\xi|^2$.
Since the measure of the set 
$\{ (\tau, \xi) \in \R \times \R^n : \tau = - |\xi|^2 \}$ is zero, 
the above identity holds for a.e. $ (\tau, \xi) \in \R \times \R^n$.
This together with fact that $v \in L^2(\R \times \R^n)$ ensure that
\eqref{map:G} belongs to $L^1(\R \times \R^n)$.

Finally, let us check the identity \eqref{id:HQ-HQnu}. By the \cref{cor:vanish} 
and the \cref{lem:Fourier_hol-extension}
the function
\[ \zeta \in \C_{|\Im|<c} \mapsto \frac{\widehat{G_Q} (\tau, \xi^\prime, \zeta)}{- \tau - |\xi^\prime|^2 - \zeta^2} \, \widecheck{\phi_Q}(\tau, \xi^\prime, \zeta)\]
is holomorphic. Then, by the Cauchy--Goursat theorem we have that
\begin{align*}
H_Q (\tau, \xi^\prime) &+ \lim_{\rho \to \infty} i \int_0^{|\nu|} \frac{\widehat{G_Q} (\tau, \xi^\prime, \rho + it)}{- \tau - |\xi^\prime|^2 - (\rho + it)^2} \, \widecheck{\phi_Q}(\tau, \xi^\prime, \rho + it) \, \dd t \\
& = H_Q^{|\nu|} (\tau, \xi^\prime) + \lim_{\rho \to \infty} i \int_0^{|\nu|} \frac{\widehat{G_Q} (\tau, \xi^\prime, - \rho + it)}{- \tau - |\xi^\prime|^2 - (-\rho + it)^2} \, \widecheck{\phi_Q}(\tau, \xi^\prime, -\rho + it) \, \dd t 
\end{align*}
for all $(\tau, \xi^\prime) \in \R \times \R^{n-1}$ such that 
$ \tau \neq - |\xi^\prime|^2$. The item \eqref{ite:representation} of the 
\cref{lem:Fourier_hol-extension}, and the decay as $\rho$ tends to infinity,
can be used to show that each of the limits in
the previous identity is zero. Hence, the identity \eqref{id:HQ-HQnu} holds.
This concludes the proof of the \cref{lem:exp_decay}.
\end{proof}

A useful consequence of the \cref{lem:exp_decay} is the following integration
by parts that generalizes the \cref{lem:_generalization_integration_by_parts}
to admit exponentially growing solutions.

\begin{proposition}\label{prop:CGO-PhS_int-by-parts}\sl 
Consider $F \in L^2(\Sigma)$ such that 
there exists $c > 0$ so that $e^{c|\x|} F \in L^2(\Sigma)$.
Let $u \in C([0,T]; L^2(\R^n))$ satisfy the conditions
\[
\left\{
		\begin{aligned}
		& (i\partial_\tm + \Delta) u = F & & \textnormal{in} \enspace \Sigma, \\
		& u(0, \centerdot) = u(T, \centerdot) = 0 &  & \textnormal{in} \enspace \R^n.
		\end{aligned}
	\right.
\]
For every $\nu \in \R^n \setminus \{ 0 \}$ with $|\nu| < c$,
consider a measurable function 
$v^\nu : \R \times \R^n \rightarrow \C$ such that
\[ \sup_{\alpha \in \N_0^2} \big[ 2^{-|\alpha|/2} \| e^{- \nu \cdot \x} v^\nu \|_{L^2(\Pi_\alpha^\nu)} \big] + \sum_{\alpha \in \N_0^2} 2^{|\alpha| \theta} \| e^{- \nu \cdot \x} (i\partial_\tm + \Delta) v^\nu \|_{L^2(\Pi_\alpha^\nu)} < \infty, \]
for some $\theta \in (0, 1/2)$. Then,
\[\int_\Sigma (i\partial_\tm + \Delta) u \overline{v^\nu} = \int_\Sigma u \overline{(i\partial_\tm + \Delta)v^\nu} \, \]
for all $\nu \in \R^n \setminus \{ 0 \}$ with $|\nu| < c$.
\end{proposition}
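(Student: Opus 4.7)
The plan is to approximate $v^\nu$ by a smooth, spatially compactly supported function so that Proposition \ref{lem:_generalization_integration_by_parts} applies, and then to remove the approximation through a two-step limiting argument that exploits Lemma \ref{lem:exp_decay} to absorb the terms generated by the spatial cutoff.

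Fix $\nu \in \R^n \setminus \{0\}$ with $|\nu| < c$. Let $\varphi_\varepsilon \in \mathcal{D}(\R \times \R^n)$ be a standard space-time mollifier supported in a ball of radius $\varepsilon$, and define $\chi_R(x) = \chi(\nu \cdot x / (|\nu| R))$ with $\chi \in C_c^\infty(\R)$, $\chi \equiv 1$ on $[-1,1]$ and $\supp \chi \subset [-2, 2]$, so that $|\nabla \chi_R| \lesssim R^{-1}$ and $|\Delta \chi_R| \lesssim R^{-2}$, both supported in the annular strip $|x \cdot \nu| \in (|\nu|R, 2|\nu| R)$. Set $v^\nu_{\varepsilon, R} = \chi_R (\varphi_\varepsilon * v^\nu)$. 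This function is smooth and compactly supported in $x$, so it lies in $L^2(\tilde \Sigma)$ together with $(i\partial_\tm + \Delta) v^\nu_{\varepsilon, R}$. Proposition \ref{lem:_generalization_integration_by_parts} applied to $u$ and $v^\nu_{\varepsilon, R}$ yields
\begin{equation*}
\int_\Sigma F\, \overline{v^\nu_{\varepsilon, R}} \; = \; \int_\Sigma u\, \overline{(i\partial_\tm + \Delta) v^\nu_{\varepsilon, R}}.
\end{equation*}

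Expanding $(i\partial_\tm + \Delta) v^\nu_{\varepsilon, R} = \chi_R [\varphi_\varepsilon * (i\partial_\tm + \Delta) v^\nu] + 2\nabla \chi_R \cdot \nabla(\varphi_\varepsilon * v^\nu) + \Delta\chi_R \, (\varphi_\varepsilon * v^\nu)$, I first send $R \to \infty$ with $\varepsilon$ fixed. On the annulus where $\nabla\chi_R$ and $\Delta \chi_R$ are supported, Lemma \ref{lem:exp_decay} gives $\|e^{\nu \cdot x} u\|_{L^2} \lesssim R^\theta$ for every $\theta \in (0, 1/2)$; the identity $e^{-\nu \cdot x}(\varphi_\varepsilon * v^\nu) = (e^{-\nu \cdot x}\varphi_\varepsilon) * (e^{-\nu \cdot x} v^\nu)$ combined with Young's inequality and the $Y$-type hypothesis on $v^\nu$ yields $\|e^{-\nu \cdot x}(\varphi_\varepsilon * v^\nu)\|_{L^2} \lesssim R^{1/2}$ uniformly for $\varepsilon \leq 1$, with an analogous $\varepsilon^{-1} R^{1/2}$ bound for the gradient. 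Factoring the integrand as $u \cdot w = (e^{\nu \cdot x} u)(e^{-\nu \cdot x} w)$ and applying Cauchy--Schwarz bounds the two commutator contributions by $\varepsilon^{-1} R^{\theta - 1/2}$ and $R^{\theta - 3/2}$ respectively, both tending to zero as $R \to \infty$ for any $\theta < 1/2$. The principal term converges by dominated convergence, the integrand being absolutely integrable via strip-by-strip Cauchy--Schwarz together with the bounds $\|e^{\nu \cdot x} u\|_{L^2(\Sigma^\nu_j)} \lesssim 2^{j\theta}$ and $\|e^{-\nu \cdot x}(i\partial_\tm + \Delta) v^\nu\|_{L^2(\Sigma^\nu_j)} \lesssim 2^{-j\theta'}$ (choosing $\theta < \theta'$ to ensure summability in $j$). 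One obtains $\int_\Sigma F\, \overline{\varphi_\varepsilon * v^\nu} = \int_\Sigma u\, \overline{\varphi_\varepsilon * (i\partial_\tm + \Delta) v^\nu}$.

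Finally, letting $\varepsilon \to 0$ and using standard convergence of mollifiers in $L^2(\Sigma^\nu_j)$ with the weight $e^{-\nu \cdot x}$, together with summability over $j$ guaranteed by the super-exponential decay of $e^{\nu \cdot x}F$ and the polynomial bound on $\|e^{\nu \cdot x} u\|_{L^2(\Sigma^\nu_j)}$ from Lemma \ref{lem:exp_decay}, yields the identity stated. The main obstacle is that $\|\nabla \varphi_\varepsilon\|_{L^1}$ diverges like $\varepsilon^{-1}$, which prevents a direct $\varepsilon \to 0$ limit inside the cutoff approximation; the resolution is precisely to pass $R \to \infty$ first, where the factor $R^{\theta - 1/2}$ coming from Lemma \ref{lem:exp_decay} dominates $\varepsilon^{-1}$ for each fixed $\varepsilon$, and only afterwards to let $\varepsilon \to 0$.
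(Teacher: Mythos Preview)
Your two-step limiting argument is correct, but the paper takes a cleaner route that avoids the mollifier $\varphi_\varepsilon$ entirely. The key observation in the paper is a \emph{hidden regularity} estimate: writing $e^{-\nu\cdot\x}(i\partial_\tm+\Delta)v^\nu=(i\partial_\tm+\Delta+2\nu\cdot\nabla+|\nu|^2)(e^{-\nu\cdot\x}v^\nu)$ and taking the imaginary part of the symbol via Plancherel gives
\[
\|\nu\cdot\nabla(e^{-\nu\cdot\x}v^\nu)\|_{L^2(\R\times\R^n)}\le \sum_{\alpha\in\N_0^2}2^{|\alpha|\theta}\|e^{-\nu\cdot\x}(i\partial_\tm+\Delta)v^\nu\|_{L^2(\Pi_\alpha^\nu)}<\infty.
\]
This free derivative in the $\nu$-direction is exactly what is needed to show directly that $(i\partial_\tm+\Delta)(\chi_R^\nu v^\nu)\in L^2(\tilde\Sigma)$, so Proposition~\ref{lem:_generalization_integration_by_parts} applies to $u$ and $\chi_R^\nu v^\nu$ with no smoothing. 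The commutator term $\nabla\chi_R\cdot\nabla v^\nu$ is then handled by writing $\nu\cdot\nabla v^\nu=e^{\nu\cdot\x}\nu\cdot\nabla(e^{-\nu\cdot\x}v^\nu)+|\nu|^2 v^\nu$ and pairing each piece against $e^{\nu\cdot\x}u$ controlled by Lemma~\ref{lem:exp_decay}; only the single limit $R\to\infty$ is needed. Your approach trades this Plancherel trick for a crude $\varepsilon^{-1}$ bound on the gradient of the mollified function, which then forces the order of limits $R\to\infty$ first, $\varepsilon\to0$ second. Both arguments work; the paper's is shorter and extracts more structure from the hypothesis on $(i\partial_\tm+\Delta)v^\nu$, while yours is more elementary in that it does not rely on noticing the hidden directional regularity.
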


\begin{proof}
Start by noticing that
\begin{equation}
\label{in:hiden_regularity}
\| \nu \cdot \nabla (e^{-\nu \cdot \x} v^\nu) \|_{L^2(\R \times \R^n)} \leq \sum_{\alpha \in \N_0^2} 2^{|\alpha| \theta} \| e^{- \nu \cdot \x} (i\partial_\tm + \Delta) v^\nu \|_{L^2(\Pi_\alpha^\nu)}.
\end{equation}
Indeed, a simple computation shows that
\[e^{- \nu \cdot \x} (i\partial_\tm + \Delta) v^\nu = (i\partial_\tm + \Delta + 2 \nu \cdot \nabla + |\nu|^2) (e^{- \nu \cdot \x} v^\nu).\]
The symbol of the differential operator on the right-hand side of 
the previous identity is 
$p_{(0, \nu)} = - \tau - |\xi|^2 + i 2\nu \cdot \xi + |\nu|^2$.
By Plancherel's identity we have that
\[ \| e^{- \nu \cdot \x} (i\partial_\tm + \Delta) v^\nu \|_{L^2(\R \times \R^n)} \geq 
\| \Im p_{(0, \nu)} \widehat{e^{- \nu \cdot \x} v^\nu} \|_{L^2(\R \times \R^n)} \geq \| \nu \cdot \nabla (e^{-\nu \cdot \x} v^\nu) \|_{L^2(\R \times \R^n)}, \]
where $\Im p_{(0, \nu)} $ denotes the imaginary part of $p_{(0, \nu)}$.
In order to conclude \eqref{in:hiden_regularity}, it is enough to observe that
\[ \| e^{- \nu \cdot \x} (i\partial_\tm + \Delta) v^\nu \|_{L^2(\R \times \R^n)} \leq  \sum_{\alpha \in \N_0^2} 2^{|\alpha| \theta} \| e^{- \nu \cdot \x} (i\partial_\tm + \Delta) v^\nu \|_{L^2(\Pi_\alpha^\nu)}. \]

From the inequality \eqref{in:exp_decay}, we have
\[ \sum_{j \in \N_0 } 2^j \| e^{\nu \cdot \x} F \|_{L^2(\Sigma_j^\nu)} \leq \Big( 1 +  \sum_{j \in \N } 2^j e^{-( \frac{c}{|\nu|} - 1 )2^{j - 1}} \Big) \| e^{c|\x|} F \|_{L^2(\Sigma)}, \]
where the series between parenthesis is convergent.
This together with the fact that $F = (i\partial_\tm + \Delta) u$ and the assumption on $v^\nu$ make it possible the 
use of the dominate convergence theorem to conclude that
\begin{equation}
\label{id:DCT}
\int_\Sigma (i\partial_\tm + \Delta) u \overline{v^\nu}\, = \lim_{R \to \infty} \int_\Sigma (i\partial_\tm + \Delta) u \, \overline{\chi_R^\nu v^\nu}
\end{equation}
where $\chi^\nu_R (x) = \chi (\nu \cdot x/R)$ for all $x \in \R^n$ with 
$\chi \in \mathcal{S}(\R)$ such that $0 \leq \chi(s) \leq 1$ for all $s \in \R$, 
$\supp \chi \subset \{ s \in \R : |s| \leq 1 \}$ and $\chi(s) = 1$ 
whenever $|s| \leq 1/2$.

We want to apply the integration-by-parts formula proved in the
\cref{lem:_generalization_integration_by_parts}, for that we need to show
that the restrictions of $\chi_R^\nu v^\nu $ and 
$(i\partial_\tm + \Delta) (\chi_R^\nu v^\nu) $ to 
$\tilde{\Sigma} = (-\delta, T + \delta) \times \R^n$ belongs to
$L^2(\tilde{\Sigma})$. The requirement for $\chi_R^\nu v^\nu $ is clearly true,
so we just need to check the requirement for
$(i\partial_\tm + \Delta) (\chi_R^\nu v^\nu) $. Leibniz's rule implies that
\begin{equation}
\label{id:Leibniz_rule}
(i\partial_\tm + \Delta) (\chi_R^\nu v^\nu) = \chi_R^\nu (i\partial_\tm + \Delta) v^\nu + \frac{2}{R} \chi^\prime (\nu \cdot \x /R) \nu \cdot \nabla v^\nu + \frac{|\nu|^2}{R^2} \chi^{\prime \prime} (\nu \cdot \x /R) v^\nu,
\end{equation}
where
\begin{equation}
\label{id:Leibniz_exp}
\nu \cdot \nabla v^\nu = e^{\nu \cdot \x} \nu \cdot \nabla (e^{- \nu \cdot \x} v^\nu) + |\nu|^2 v^\nu.
\end{equation}
By the hidden regularity exhibited in \eqref{in:hiden_regularity} and the 
previous computations is now clear that the restriction to $\tilde{\Sigma}$
of $(i\partial_\tm + \Delta) (\chi_R^\nu v^\nu) $ belongs to
$L^2(\tilde{\Sigma})$. Therefore, we can apply the 
\cref{lem:_generalization_integration_by_parts} to the integral inside the limit
of the identity \eqref{id:DCT}:
\[\int_\Sigma (i\partial_\tm + \Delta) u \overline{v^\nu}\, = \lim_{R \to \infty} \int_\Sigma u \, \overline{(i\partial_\tm + \Delta) (\chi_R^\nu v^\nu)}.\]
The identities \eqref{id:Leibniz_rule} and \eqref{id:Leibniz_exp} give rise to
\begin{align*}
\int_\Sigma u \, \overline{(i\partial_\tm + \Delta) (\chi_R^\nu v^\nu)} =& \int_\Sigma \chi_R^\nu\, u \, \overline{(i\partial_\tm + \Delta) v^\nu} + \frac{2}{R} \int_\Sigma \chi^\prime (\nu \cdot \x /R) e^{\nu \cdot \x} u \, \overline{ \nu \cdot \nabla (e^{- \nu \cdot \x} v^\nu)}\\
& + |\nu|^2 \int_\Sigma \Big[ \frac{2}{R} \chi^\prime (\nu \cdot \x /R) + \frac{1}{R^2} \chi^{\prime \prime} (\nu \cdot \x /R) \Big] u \, \overline{v^\nu}.
\end{align*}
From the assumption for $v^\nu$ and the \cref{lem:exp_decay}, we have by the 
dominate convergence theorem that
\[ \lim_{R \to \infty} \int_\Sigma \chi_R^\nu\, u \, \overline{(i\partial_\tm + \Delta) v^\nu} = \int_\Sigma u \, \overline{(i\partial_\tm + \Delta) v^\nu}. \]
Additionally, using the \cref{lem:exp_decay} to estimate $u$ as $R$ becomes 
large, the hidden-regularity inequality \eqref{in:hiden_regularity}, and the 
assumptions on $v^\nu$, one can check that
\[ \lim_{R \to \infty} \frac{1}{R} \int_\Sigma \chi^\prime (\nu \cdot \x /R) e^{\nu \cdot \x} u \, \overline{ \nu \cdot \nabla (e^{- \nu \cdot \x} v^\nu)} \, = 0, \]
and
\[ \lim_{R \to \infty} \frac{1}{R} \int_\Sigma \chi^\prime (\nu \cdot \x /R) u \, \overline{v^\nu} \, = \lim_{R \to \infty} \frac{1}{R^2} \int_\Sigma \chi^{\prime \prime} (\nu \cdot \x /R) u \, \overline{v^\nu} \, = 0. \]
Thus, the integration-by-parts formula of the statement is proved.
\end{proof}

\subsection{An orthogonal relation for exponentially growing solutions}
Whenever $V_1$ and $V_2$ are so that their corresponding initial-to-final-state
maps $\mathcal{U}_T^1$ and $\mathcal{U}_T^2$ satisfy
\[\mathcal{U}_T^1 = \mathcal{U}_T^2\]
The integral identity \eqref{id:integral_identity} becomes an orthogonality
relation between $V_1 - V_2$ and the product $u_1 \overline{v_2}$, which are 
physical solutions. The goal here is to prove that the same orthogonal relation
holds for the product of exponentially growing solutions.

\begin{theorem}\label{th:orthogolaity_CGO} \sl 
Consider $V_1, V_2 \in L^1((0, T); L^\infty(\R^n))$ such that there exists $c > 0$
so that $e^{c |\x|} V_j \in L^\infty (\Sigma) \cap L^2 (\Sigma)$
for $j \in \{ 1, 2 \}$.
Let $\mathcal{U}_T^1$ and $\mathcal{U}_T^2$ denote the initial-to-final-state
maps associated to $V_1$ and $V_2$ respectively. Then, the equality 
$\mathcal{U}_T^1 = \mathcal{U}_T^2$ implies that
\[ \int_\Sigma (V_1 - V_2)u_1^\eta \overline{v_2^\nu}\, = 0 \]
for all $u_1^\eta$ and $v_2^\nu$
solutions of
\[ (i\partial_\tm + \Delta - V_1^{\rm ext}) u_1^\eta =  (i\partial_\tm + \Delta - \overline{V_2^{\rm ext}}) v_2^\nu = 0 \enspace \textnormal{in} \enspace \R \times \R^n, \]
with $V_j^{\rm ext}$ denoting the trivial extension \eqref{def:extension} 
of $V_j$, and such that
\[ \sup_{\alpha \in \N_0^2} \big[ 2^{-|\alpha|/2} \| e^{-\eta \cdot \x} u_1^\eta \|_{L^2(\Pi_\alpha^\eta)} \big] + \sup_{\alpha \in \N_0^2} \big[ 2^{-|\alpha|/2} \| e^{- \nu \cdot \x} v_2^\nu \|_{L^2(\Pi_\alpha^\nu)} \big] < \infty \]
for $ \eta, \nu \in \R^n \setminus \{ 0 \}$ with $|\eta| + |\nu| < c$.
\end{theorem}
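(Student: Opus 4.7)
The plan is to bootstrap the orthogonality identity from physical solutions up to the CGO solutions in two stages, each combining one of Lemmas \ref{lem:orthogonal_rewardPB}--\ref{lem:orthogonal_forwardPB} with one application of the integration-by-parts formula of Proposition \ref{prop:CGO-PhS_int-by-parts}. Because $\mathcal{U}_T^1 = \mathcal{U}_T^2$, Proposition \ref{prop:orthogonality} already gives $\int_\Sigma (V_1 - V_2)u_1\overline{v_2}\, = 0$ for every physical forward solution $u_1$ of $(i\partial_\tm + \Delta - V_1)u_1 = 0$ and every physical backward solution $v_2$ of $(i\partial_\tm + \Delta - \overline{V_2})v_2 = 0$. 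I would first extend this identity by replacing $v_2$ with the CGO $v_2^\nu$ (keeping $u_1$ physical), and then, using the resulting new orthogonality, replace $u_1$ by the CGO $u_1^\eta$.

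For the first stage, fix a physical $u_1$ with initial datum $f \in L^2(\R^n)$ and set $F_f = (V_1 - V_2)u_1 \in L^1((0,T); L^2(\R^n))$; the physical orthogonality reads $\int_\Sigma F_f\, \overline{v_2}\, = 0$ for every physical backward $v_2$. Lemma \ref{lem:orthogonal_rewardPB} then produces $\tilde u \in C([0,T]; L^2(\R^n))$ with $(i\partial_\tm + \Delta - V_2)\tilde u = F_f$ and $\tilde u(0,\centerdot) = \tilde u(T,\centerdot) = 0$. Since $e^{c|\x|}V_j \in L^\infty(\Sigma)$ and $u_1, \tilde u$ are bounded in $L^\infty((0,T); L^2(\R^n))$, the source $(i\partial_\tm + \Delta)\tilde u = V_2\tilde u + (V_1 - V_2)u_1$ satisfies $e^{c|\x|}(i\partial_\tm + \Delta)\tilde u \in L^2(\Sigma)$; together with $|\nu| < c$ this allows one to apply Proposition \ref{prop:CGO-PhS_int-by-parts} to $u = \tilde u$ and $v^\nu = v_2^\nu$. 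The summability condition on $e^{-\nu\cdot\x}(i\partial_\tm + \Delta)v_2^\nu = \overline{V_2}(e^{-\nu\cdot\x} v_2^\nu)$ required by that proposition follows from bounding $\|V_2\|_{L^\infty(\Pi_\alpha^\nu \cap \Sigma)}$ by the super-exponential factor $e^{-c\, 2^{\alpha_2 - 1}}$ on the strips $\Omega_{\alpha_2}^\nu$ and combining with $\|e^{-\nu\cdot\x} v_2^\nu\|_{L^2(\Pi_\alpha^\nu)} \lesssim 2^{|\alpha|/2}$. Substituting $(i\partial_\tm + \Delta)\tilde u = V_2\tilde u + (V_1 - V_2)u_1$ on the left and $\overline{(i\partial_\tm + \Delta)v_2^\nu} = V_2\overline{v_2^\nu}$ in $\Sigma$ on the right of the resulting identity, the $V_2\tilde u\overline{v_2^\nu}$ terms cancel, leaving
\[ \int_\Sigma (V_1 - V_2)u_1\overline{v_2^\nu}\, = 0 \]
for every physical $u_1$ and every admissible $v_2^\nu$.

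The second stage runs the symmetric argument. Fixing $v_2^\nu$ and setting $G = \overline{(V_1 - V_2)}v_2^\nu$, the hypothesis $|\nu|<c$ combined with the growth bound on $v_2^\nu$ gives $G \in L^1((0,T); L^2(\R^n))$, and the previous identity reads $\int_\Sigma \overline G\, u_1\, = 0$ for every physical forward solution $u_1$ of the equation with $V_1$. Lemma \ref{lem:orthogonal_forwardPB} then furnishes $w \in C([0,T]; L^2(\R^n))$ solving $(i\partial_\tm + \Delta - \overline{V_1})w = G$ with $w(0,\centerdot) = w(T,\centerdot) = 0$, to which I would apply Proposition \ref{prop:CGO-PhS_int-by-parts} with $u = w$ and $v^\eta = u_1^\eta$; expanding $(i\partial_\tm + \Delta)w = \overline{V_1}w + \overline{(V_1 - V_2)}v_2^\nu$ and $\overline{(i\partial_\tm + \Delta)u_1^\eta} = \overline{V_1}\,\overline{u_1^\eta}$ in $\Sigma$, the $\overline{V_1}w\overline{u_1^\eta}$ contributions cancel and $\int_\Sigma \overline{(V_1 - V_2)}v_2^\nu\overline{u_1^\eta}\, = 0$ drops out; taking complex conjugates yields the theorem. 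The main technical hurdle, and the precise reason why the combined assumption $|\eta|+|\nu|<c$ is imposed rather than $|\eta|,|\nu|<c$ separately, is to verify $e^{c'|\x|}(i\partial_\tm + \Delta)w \in L^2(\Sigma)$ for some $c' > |\eta|$: the $\overline{(V_1 - V_2)}v_2^\nu$ piece of $(i\partial_\tm + \Delta)w$ carries the factor $e^{\nu\cdot x}$, so $c'$ must satisfy $|\eta| < c' < c - |\nu|$, which is solvable exactly under $|\eta|+|\nu|<c$.
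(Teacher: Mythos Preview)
Your proposal is correct and follows essentially the same two-stage bootstrap as the paper: first fix a physical $u_1$, use Lemma~\ref{lem:orthogonal_rewardPB} together with Proposition~\ref{prop:orthogonality} to obtain an auxiliary solution vanishing at both endpoints, then invoke Proposition~\ref{prop:CGO-PhS_int-by-parts} to reach $v_2^\nu$; second, fix $v_2^\nu$, use Lemma~\ref{lem:orthogonal_forwardPB} and again Proposition~\ref{prop:CGO-PhS_int-by-parts} to reach $u_1^\eta$. You also identify correctly that the constraint $|\eta|+|\nu|<c$ enters precisely in the second stage, where one needs $e^{c'|\x|}(i\partial_\tm+\Delta)w\in L^2(\Sigma)$ for some $c'$ with $|\eta|<c'<c-|\nu|$.
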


\begin{proof}
We start by proving that if $\mathcal{U}_T^1 = \mathcal{U}_T^2$ then
\[ \int_\Sigma (V_1 - V_2)u_1 \overline{v_2^\nu}\, = 0 \]
for every $u_1$ physical solution of \eqref{pb:IVP} with potential $V_1$,
and every exponentially growing solution $v_2^\nu $,
with $\nu \in \R^n \setminus \{ 0 \}$ and $|\nu| < c$, as in the statement.

For every physical solution $u_1$ of \eqref{pb:IVP} with potential $V_1$,
let $w_2 \in C([0, T]; L^2(\R^n))$ be the solution
of the problem 
\[\left\{
		\begin{aligned}
		& (i\partial_\tm + \Delta -  V_2)w_2 = (V_1 - V_2)u_1 & & \textnormal{in} \, \Sigma, \\
		& w_2(0, \centerdot) = 0 &  & \textnormal{in} \, \R^n.
		\end{aligned}
\right.\]
Since $\mathcal{U}_T^1 = \mathcal{U}_T^2$
and $ V_1, V_2 \in L^1((0,T); L^\infty (\R^n))$,
we can apply the \cref{lem:orthogonal_rewardPB} with $F = (V_1 - V_2)u_1$
and \eqref{id:integral_identity} to deduce that $w_2(T, \centerdot) = 0$.
Furthermore, since
$e^{c |\x|} V_j \in L^\infty (\Sigma)$ for $j \in \{ 1, 2 \}$,
we have that $e^{c |\x|} (i\partial_\tm + \Delta)w_2 \in L^2 (\Sigma) $.
This means that $w_2$ shares
the same properties as $u$ in the \cref{prop:CGO-PhS_int-by-parts} for all
$\nu \in \R^n \setminus \{ 0\}$ with $|\nu| < c$.

Furthermore, every exponentially growing solution $v_2^\nu$ satisfies the same conditions as $v^\nu$ in the \cref{prop:CGO-PhS_int-by-parts}. Indeed, the fact
that
\[ \sum_{j \in \N_0} 2^{j(1/2 + \theta)} \| V_2 \|_{L^\infty(\Sigma^\nu_j)} \leq \Big( 1 + \sum_{j \in \N} 2^{j(\theta + 1/2)} e^{-c2^{j-1}} \Big) \| e^{c |x|} V_2 \|_{L^\infty (\Sigma)} \]
for all $\theta \in (0, 1/2)$ and $\nu \in \R^n \setminus \{ 0\}$, implies that
\[ \sum_{\alpha \in \N_0^2} 2^{|\alpha| \theta} \| e^{- \nu \cdot \x} (i\partial_\tm + \Delta) v_2^\nu \|_{L^2(\Pi_\alpha^\nu)} < \infty \]
for all $\theta \in (0, 1/2)$ and $\nu \in \R^n \setminus \{ 0\}$ with 
$|\nu| < c$.

Then, $w_2$ and $v_2^\nu$ satisfy the hypothesis of the 
\cref{prop:CGO-PhS_int-by-parts}, for all $\nu \in \R^n \setminus \{ 0\}$ with 
$|\nu| < c$, and we have the 
following integration-by-parts formula
\[\int_\Sigma (i\partial_\tm + \Delta) w_2 \overline{v_2^\nu} = \int_\Sigma w_2 \overline{(i\partial_\tm + \Delta)v_2^\nu} \,. \]
The inequality 
\[ \sum_{j \in \N_0} 2^{j/2} \| e^{\nu \cdot \x} (V_1 - V_2) \|_{L^\infty(\Sigma^\nu_j)} \leq \Big( 1 + \sum_{j \in \N} 2^{j/2} e^{-( \frac{c}{|\nu|} - 1 )2^{j - 1}} \Big) \| e^{c |x|} (V_1 - V_2) \|_{L^\infty (\Sigma)} \]
guarantees that 
$(V_1 - V_2)u_1 \overline{v_2^\nu} \in L^1(\Sigma)$ for all 
$\nu \in \R^n \setminus \{ 0\}$ with $|\nu| < c$. 
That inequality follows from \eqref{in:exponents} and the series between 
parenthesis converges if $|\nu| < c$.
As a consequence of the fact that
$(V_1 - V_2)u_1 \overline{v_2^\nu} \in L^1(\Sigma)$ we can write
\[ \int_\Sigma (V_1 - V_2)u_1 \overline{v_2^\nu}\, = \int_\Sigma (i\partial_\tm + \Delta - V_2)w_2 \overline{v_2^\nu}\, = \int_\Sigma w_2 \overline{(i\partial_\tm + \Delta - \overline{V_2}) v_2^\nu}\, = 0. \]
In the first identity we have used that
$(i\partial_\tm + \Delta - V_2)w_2 = (V_1 - V_2)u_1$, in the second one we have
used the previous integration-by-parts formula for $w_2$ and $v_2^\nu$,
and in the last equality we have used that $(i\partial_\tm + \Delta - \overline{V_2}) v_2^\nu = 0$ in $\Sigma$. This proves that
if $\mathcal{U}_T^1 = \mathcal{U}_T^2$ then
\begin{equation}
\label{id:orthogonal_phy-exp}
\int_\Sigma (V_1 - V_2)u_1 \overline{v_2^\nu}\, = 0
\end{equation}
for every $u_1$ physical solution with potential $V_1$,
and every exponentially growing solution $v_2^\nu$,
with $\nu \in \R^n \setminus \{ 0 \}$ and $|\nu| < c$, as in the statement.

Finally, we will prove that the orthogonality relation 
\eqref{id:orthogonal_phy-exp} yields 
the one in the statement, that is,
\[ \int_\Sigma (V_1 - V_2)u_1^\eta \overline{v_2^\nu}\, = 0 \]
for every exponentially growing solutions $u_1^\eta$ and $v_2^\nu$ as in the 
statement.

Start by noticing that the inequality
\[ \sum_{j \in \N_0} 2^{j/2} \| e^{\nu \cdot \x} (V_1 - V_2) \|_{L^2(\Sigma^\nu_j)} \leq \Big( 1 + \sum_{j \in \N} 2^{j/2} e^{-( \frac{c}{|\nu|} - 1 )2^{j - 1}} \Big) \| e^{c |x|} (V_1 - V_2) \|_{L^2 (\Sigma)} \]
ensures that
$(\overline{V_1} - \overline{V_2}) v_2^\nu $ belongs to $ L^1((0,T); L^2(\R^n))$
for all $\nu \in \R^n \setminus \{ 0\}$ with $|\nu| < c$.
Again, that inequality follows from \eqref{in:exponents} and the series between 
parenthesis converges if $|\nu| < c$.
Then, let $w_1^\nu \in C([0, T]; L^2(\R^n))$ be the solution
of the problem
\[\left\{
		\begin{aligned}
		& (i\partial_\tm + \Delta - \overline{V_1})w_1^\nu = (\overline{V_1} - \overline{V_2}) v_2^\nu & & \textnormal{in} \, \Sigma, \\
		& w_1(T, \centerdot) = 0 &  & \textnormal{in} \, \R^n.
		\end{aligned}
\right.\]
Since $ V_1 \in L^1((0,T); L^\infty (\R^n))$ and $(\overline{V_1} - \overline{V_2}) v_2^\nu \in L^1((0,T); L^2(\R^n))$,
we can apply the \cref{lem:orthogonal_forwardPB} with 
$G = (\overline{V_1} - \overline{V_2}) v_2^\nu$ and \eqref{id:orthogonal_phy-exp}
to deduce that $w_1^\nu(0, \centerdot) = 0$.
As we argued earlier to prove \eqref{id:orthogonal_phy-exp}, we want to show that,
given $\nu \in \R^n \setminus \{ 0 \}$ with  $|\nu| < c$, we have that
$ e^{c^\prime |\x|} (i\partial_\tm + \Delta)w_1^\nu \in L^2(\Sigma) $
for all $c^\prime \in (0, c - |\nu|)$.
In this way, we could conclude that $w_1^\nu$,
with $\nu \in \R^n \setminus \{ 0\}$ and $|\nu| < c$, shares
the same properties as $u$ in the \cref{prop:CGO-PhS_int-by-parts} for all 
$c^\prime < c - |\nu|$.

Since $e^{c |\x|} \overline{V_1} w_1^\nu \in L^2(\Sigma)$, in order to prove that
$ e^{c^\prime |\x|} (i\partial_\tm + \Delta)w_1^\nu \in L^2(\Sigma) $
for all $c^\prime \in (0, c - |\nu|)$, we only have to check that
$ e^{c^\prime |\x|} (\overline{V_1} - \overline{V_2}) v_2^\nu \in L^2(\Sigma) $
for all $c^\prime < c - |\nu|$. This follows from the inequality
\begin{align*}
\sum_{j \in \N_0} 2^{j/2} \| e^{c^\prime |\x| + \nu \cdot \x} (V_1 & - V_2) \|_{L^\infty(\Sigma^\nu_j)} \\
& \leq \Big( 1 + \sum_{j \in \N} 2^{j/2} e^{-( \frac{c - c^\prime}{|\nu|} - 1 )2^{j - 1}} \Big) \| e^{c |x|} (V_1 - V_2) \|_{L^\infty (\Sigma)},
\end{align*}
where the series between parenthesis converges if $c^\prime < c - |\nu|$.
To justify the above inequality we have used that
\[\nu \cdot x \leq - \Big( \frac{c - c^\prime}{|\nu|} - 1 \Big) |\nu \cdot x| +  \frac{c - c^\prime}{|\nu|} |\nu \cdot x| \leq - \Big( \frac{c - c^\prime}{|\nu|} - 1 \Big) |\nu \cdot x| +  (c - c^\prime) |x|,\]
which is valid for all $x \in \R^n$. This discussion was to prove that
$ e^{c^\prime |\x|} (i\partial_\tm + \Delta)w_1^\nu \in L^2(\Sigma) $
for all $c^\prime \in (0, c - |\nu|)$ with $\nu \in \R^n \setminus \{ 0\}$ and
$|\nu| < c$.

Additionally, every exponentially growing solution $u_1^\eta$ satisfies the same 
conditions as $v^\eta$ in the \cref{prop:CGO-PhS_int-by-parts}.
Indeed, the fact that 
\[ \sum_{j \in \N_0} 2^{j(1/2 + \theta)} \| V_1 \|_{L^\infty(\Sigma^\eta_j)} \leq \Big( 1 + \sum_{j \in \N} 2^{j(\theta + 1/2)} e^{-c2^{j-1}} \Big) \| e^{c |x|} V_1 \|_{L^\infty (\Sigma)} \]
for all $\theta \in (0, 1/2)$ and $\eta \in \R^n \setminus \{ 0\}$, 
implies that
\[ \sum_{\alpha \in \N_0^2} 2^{|\alpha| \theta} \| e^{-\eta \cdot \x} (i\partial_\tm + \Delta) u_1^\eta \|_{L^2(\Pi_\alpha^\eta)} < \infty \]
for all $\theta \in (0, 1/2)$.

Then, given $\nu \in \R^n \setminus \{ 0 \}$ with $|\nu| < c$, we have $w_1^\nu$ 
and $u_1^\eta$, with $\eta \in \R^n \setminus \{ 0 \}$ and $|\eta| < c^\prime$, 
satisfy the hypothesis of the \cref{prop:CGO-PhS_int-by-parts} for all
$c^\prime \in (0, c - |\nu|)$. Thus, we have that the integration-by-parts formula
\[\int_\Sigma (i\partial_\tm + \Delta) w_1^\nu \overline{u_1^\eta} = \int_\Sigma w_1^\nu \overline{(i\partial_\tm + \Delta)u_1^\eta} \, \]
holds for all $ \eta, \nu \in \R^n \setminus \{ 0 \}$ with $|\eta| + |\nu| < c$.

The inequality
\begin{align*}
& \sum_{\alpha \in \N_0^2} 2^{|\alpha|/2} \| e^{(\eta + \nu) \cdot \x} (V_1 - V_2) \|_{L^\infty(\Sigma^\nu_{\alpha_1} \cap \Sigma^\eta_{\alpha_2})} \\
& \enspace \leq \Big( 1 + \sum_{j \in \N} 2^\frac{j}{2} e^{-( \frac{c - |\eta|}{|\nu|} - 1 )2^{j - 1}} \Big) \Big( 1 + \sum_{k \in \N} 2^\frac{k}{2} e^{-( \frac{c - |\nu|}{|\eta|} - 1 )2^{k - 1}} \Big) \| e^{c |x|} (V_1 - V_2) \|_{L^\infty (\Sigma)} 
\end{align*}
guarantees that $(V_1 - V_2)u_1^\eta \overline{v_2^\nu} \in L^1(\Sigma)$, since
the series between parenthesis converge if $|\eta| + |\nu| < c$.
Recall that $\alpha = (\alpha_1, \alpha_2)$ and $|\alpha| = \alpha_1 + \alpha_2$.
To prove the above inequality we have used that
\[(\eta + \nu) \cdot x \leq - \Big( \frac{c - |\eta|}{|\nu|} - 1 \Big) |\nu \cdot x| - \Big( \frac{c - |\nu|}{|\eta|} - 1 \Big) |\eta \cdot x| +  (c - |\eta| - |\nu|) |x|,\]
which is valid for all $x \in \R^n$, whenever $|\eta| < c$ and $|\nu| < c$.
Since $(V_1 - V_2)u_1^\eta \overline{v_2^\nu} \in L^1(\Sigma)$ whenever
$|\eta| + |\nu| < c$, we can write
\[ \int_\Sigma (V_1 - V_2)u_1^\eta \overline{v_2^\nu}\, = \int_\Sigma u_1^\eta \overline{(i\partial_\tm + \Delta - \overline{V_1}) w_1^\nu} \, = \int_\Sigma (i\partial_\tm + \Delta - V_1) u_1^\eta \overline{w_1^\nu} \, = 0. \]
In the first identity we have used that
$(i\partial_\tm + \Delta - \overline{V_1})w_1^\nu = (\overline{V_1} - \overline{V_2}) v_2^\nu$, in the second one we have
used the previous integration-by-parts formula for $w_1^\nu$ and $u_1^\eta$,
and in the last equality we have used that $(i\partial_\tm + \Delta - V_1) u_1^\eta = 0$ in $\Sigma$. This proves that
the orthogonality relation
\[ \int_\Sigma (V_1 - V_2)u_1^\eta \overline{v_2^\nu}\, = 0 \]
holds for every every exponentially growing solutions $u_1^\eta$ and $v_2^\nu$,
with $\eta, \nu \in \R^n \setminus \{ 0 \}$ and $|\eta| + |\nu| < c$,
as in the statement.
\end{proof}

\section{Proof of uniqueness} \label{sec:uniqueness}
Consider $V_1, V_2 \in L^1((0, T); L^\infty (\R^n))$ with super-exponential decay.
Let $V_1^{\rm ext}$ and $V_2^{\rm ext}$ denote the trivial extensions
\[V_k^{\rm ext} (t, x) = \left\{ \begin{aligned}
& V_k (t, x) & & (t, x) \in \Sigma,\\
& 0 & & (t, x) \notin \Sigma.
\end{aligned} \right.\]
For $\nu \in \R^n \setminus \{ 0 \}$, choose
\[\varphi_1(t,x) = i |\nu|^2 t - \nu \cdot x , \quad \varphi_2(t,x) = i |\nu|^2 t + \nu \cdot x \qquad \forall (t, x) \in \R \times \R^n.\]
Let $u_1 = e^{\varphi_1} (u_1^\sharp + u_1^\flat)$ and 
$v_2 = e^{\varphi_2} (v_2^\sharp + v_2^\flat)$ denote the CGO solutions of
the \cref{th:CGO} for the equations
\[ (i\partial_\tm + \Delta - V_1^{\rm ext}) u_1 = 0 \enspace \textnormal{and} \enspace (i\partial_\tm + \Delta - \overline{V_2^{\rm ext}}) v_2 = 0 \enspace \textnormal{in} \enspace \R \times \R^n, \]
respectively. For convenience, recall that for every 
$(t, x) \in \R \times \R^n$,
\[ u_1^\sharp (t, x) = [e^{i t \Delta^\prime} f] (QRe_1 \cdot x, \dots, QRe_{n - 1} \cdot x), \, v_2^\sharp (t, x) = [e^{i t \Delta^\prime} g] (Qe_1 \cdot x, \dots, Qe_{n - 1} \cdot x),\]
where $\Delta^\prime$ is the free Hamiltonian in
$\R^{n - 1}$, $f$ and $ g$ are arbitrary functions in $L^2(\R^{n - 1})$, and 
$Q , R \in \Orth(n) $ such that $\nu = |\nu| Qe_n$, $ R e_j = e_j $ for
$j \in \{ 1, \dots , n - 1 \}$ and $R e_n = - e_n$ (note that 
$-\nu = |\nu| Q R e_n$).
Furthermore, whenever $\nu \in \R^n \setminus \{ 0 \}$ with $|\nu| \geq \rho_0$ we have that
\[ \sup_{\alpha \in \N_0^2} \big[ 2^{-|\alpha|/2} \| u_1^\sharp \|_{L^2 (\Pi^\nu_\alpha)} \big] + |\nu|^{1/2} \sup_{\alpha \in \N_0^2} \big[ 2^{-|\alpha|(1/2 - \theta)} \| u_1^\flat \|_{L^2 (\Pi^\nu_\alpha)} \big] \lesssim \| f \|_{L^2(\R^{n - 1})} \]
and
\[ \sup_{\alpha \in \N_0^2} \big[ 2^{-|\alpha|/2} \| v_2^\sharp \|_{L^2 (\Pi^\nu_\alpha)} \big] + |\nu|^{1/2} \sup_{\alpha \in \N_0^2} \big[ 2^{-|\alpha|(1/2 - \theta)} \| v_2^\flat \|_{L^2 (\Pi^\nu_\alpha)} \big] \lesssim \| g \|_{L^2(\R^{n - 1})}. \]
By the \cref{th:orthogolaity_CGO}, we have that
\[ \Big| \int_\Sigma (V_1 - V_2) u_1^\sharp \overline{v_2^\sharp} \, \Big| \lesssim  \frac{1}{|\nu|^{1/2}} \| f \|_{L^2(\R^{n - 1})} \| g \|_{L^2(\R^{n - 1})}, \]
where the implicit constant only depends on $V_1$, $V_2$, $T$ and $\rho_0$.
Making $|\nu|$ go to infinity. We deduce that
\[ \int_\Sigma (V_1 - V_2) u_1^\sharp \overline{v_2^\sharp} \, = 0. \]
Write $F = V_1^{\rm ext} - V_2^{\rm ext}$, since $ R e_j = e_j $ for
$j \in \{ 1, \dots , n - 1 \}$ we have
\[ 0 = \int_{\R \times \R^n} F u_1^\sharp \overline{v_2^\sharp} \, =
\int_{\R \times \R^n} F (t, Qy) \,e^{i t \Delta^\prime} f (y^\prime) \, \overline{e^{i t \Delta^\prime} g (y^\prime)} \, \dd (t, y) \]
for all $f, g \in L^2(\R^{n - 1})$ and all $Q \in \Orth(n)$ 
---here $y = (y^\prime, y_n) \in \R^{n - 1} \times \R$. Then,
by a density argument similar to the one after \eqref{id:orthogonality_2nd},
we have that
\[ \int_{\R \times \R^n} F (t, Qy) \, e^{-i|\eta^\prime |^2 t + i \eta^\prime \cdot y^\prime} \, \overline{e^{-i|\kappa^\prime|^2 t + i \kappa^\prime \cdot y^\prime}} \, \dd (t, y) = 0 \]
for all $ \eta^\prime, \kappa^\prime \in \R^{n - 1}$ and
all $Q \in \Orth(n)$. Hence,
\[ \int_{\R \times \R^n} F (t, x) \, e^{-i|Q \eta|^2 t + i Q \eta \cdot x} \, \overline{e^{-i|Q \kappa|^2 t + i Q \kappa \cdot x}} \, \dd (t, x) = 0 \]
for all $\eta, \kappa \in \R^{n}$ such that 
$\eta \cdot e_n = \kappa \cdot e_n =0$, 
and all $Q \in \Orth(n)$.

Given $(\tau, \xi) \in \R \times \R^n$ such that $\xi \neq 0$,
we choose $\nu \in \R^n \setminus \{ 0 \}$ so that 
$\xi \cdot \nu = 0$ and, $\eta, \kappa \in \R^{n}$ such that 
$\eta \cdot e_n = \kappa \cdot e_n =0$ satisfying
\[Q \eta = -\frac{1}{2} \Big( 1 + \frac{\tau}{|\xi|^2} \Big) \xi, \qquad Q \kappa = \frac{1}{2} \Big( 1 - \frac{\tau}{|\xi|^2} \Big) \xi,\]
where $Q \in \Orth(n)$ so that $\nu = |\nu| Q e_n$.
Since
\[ -i(|Q \eta|^2 - |Q \kappa|^2) t - i (Q \kappa - Q \eta) \cdot x = -i \tau t - i \xi \cdot x, \]
we have that
\[ \widehat{F} (\tau, \xi) = 0 \]
for all $(\tau, \xi) \in \R \times (\R^n \setminus \{ 0 \}) $.
Additionally,
if $(\tau, \xi) = (0, 0)$ we choose an arbitrary $\nu \in \R^n \setminus \{ 0 \}$ and $\eta = \kappa = 0$, and we obtain that
$\widehat{F}(0, 0) = 0$. Consequently, $\widehat{F}$ vanishes in 
$\{ (\tau, \xi) \in \R \times \R^n : \xi \neq 0 \} \cup \{ (0, 0) \}$.
Since the $\widehat{F}$ is 
continuous in $\R \times \R^n$ we can conclude that $\widehat{F}(\tau, \xi) = 0$ 
for all $(\tau, \xi) \in \R \times \R^n$. Consequently, by the injectivity of the 
Fourier transform we have that $F(t, x) = 0$ for a.e. 
$(t, x) \in \R \times \R^n$, which implies that $V_1 (t, x) = V_2(t, x)$ for
a.e. $(t, x) \in \Sigma$. This concludes the proof of the 
\cref{th:uniqueness}.

\begin{acknowledgements}
P.C. is supported by the project PID2021-122156NB-I00, as well as BCAM-BERC 
2022-2025 and the BCAM Severo Ochoa CEX2021-001142-S.
\end{acknowledgements}

\bibliography{references}{}
\bibliographystyle{plain}

\end{document}